\tikzset{main node/.style={circle,draw,minimum size=1cm,inner sep=2pt}}
\def \l{\left}
\def \r{\right}
\def \R{\mathbb{R}}
\def \E{\mathbb{E}}
\def \ol{\overline}
\def \b{\beta}
\def \g{\gamma}
\def \N{{\operatorname{N}}}
\def \Row{\text{R}}
\def \d{\delta}
\def \t{\theta}
\def \tr{\operatorname{trace}}
\def \vol{\text{d}}
\def \wt{\widetilde}
\def \s{\sigma}
\def \KL{\operatorname{KL}}
\def \W{\operatorname{W}}
\def \TV{\operatorname{TV}}
\def \der{\operatorname{d}\hspace{-1pt}} 
\def \o{\rho}
\def \normal{\operatorname{normal}}
\def \A{\mathcal{A}}
\def \P{\mathcal{P}}
\def \diam{\operatorname{diam}}
\def \F{\mathcal{F}}
\def \relint{\operatorname{relint}}
\def \aff{\operatorname{aff}}
\def \nc{\operatorname{NC}} % Normal cone
\def \lam{\lambda}
\DeclareMathOperator*{\argmin}{arg\,min}
\newtheorem{thm}{\protect\theoremname}
  \newtheorem{lem}[thm]{\protect\lemmaname}
  \newtheorem{defn}[thm]{\protect\definitionname}
    \newtheorem{assumption}[thm]{\protect\assumptionname}
  \providecommand{\definitionname}{Definition}
  \providecommand{\examplename}{Example}
  \providecommand{\lemmaname}{Lemma}
  \providecommand{\corrolaryname}{Corollary}
  \providecommand{\propositionname}{Proposition}
  \providecommand{\conditionsname}{Conditions}
\providecommand{\theoremname}{Theorem}
\providecommand{\assumptionname}{Assumption}
\begin{document}

% If your paper is accepted and the title of your paper is very long,
% the style will print as headings an error message. Use the following
% command to supply a shorter title of your paper so that it can be
% used as headings.
%
%\runningtitle{I use this title instead because the last one was very long}

% If your paper is accepted and the number of authors is large, the
% style will print as headings an error message. Use the following
% command to supply a shorter version of the authors names so that
% they can be used as headings (for example, use only the surnames)
%
%\runningauthor{Surname 1, Surname 2, Surname 3, ...., Surname n}

\twocolumn[

\icmltitle{Double-Loop Unadjusted Langevin Algorithm}

\begin{icmlauthorlist}
\icmlauthor{Paul Rolland}{epfl}
\icmlauthor{Armin Eftekhari}{umea}
\icmlauthor{Ali Kavis}{epfl}
\icmlauthor{Volkan Cevher}{epfl}
\end{icmlauthorlist}

\icmlaffiliation{epfl}{LIONS, Ecole Polytechnique F\'ed\'erale de Lausanne, Switzerland}
\icmlaffiliation{umea}{Department of Mathematics and Mathematical Statistics, Umea University, Sweden}

\icmlcorrespondingauthor{Paul Rolland}{paul.rolland@epfl.ch}

\vskip 0.3in
]

\printAffiliationsAndNotice{}

\begin{abstract}
%\anote{please update the abstract and then I'll edit too. I shortened the title.}
%This work extends the reach of Langevin-based sampling methods to a more general class of distributions that we denote as \textit{smoothable} distributions. Intuitively, a smoothable distribution $\mu$ is a distribution that can be approximated by a sequence a smooth distributions converging to $\mu$ in Wasserstein distance. Under suitable assumption on the distribution's tail, we prove, for the first time, the convergence of Unadjusted Langevin Algorithm (ULA) in 2-Wasserstein ($\W_2$) distance for sampling from a (non-strongly) log-concave distribution. To establish this result, we derive a new theoretical bound that relates the Wasserstein distance to TV distance between two distributions with light tails that complements the reach of Talagrand $T_2$ inequality. We show that our framework allows to sample from constrained distributions via \textit{homotopy}, possibly in a projection-free manner, and also over constraint domains with empty interior, such as affine subspaces. In the case of a distribution constrained on a bounded convex set, we improve the state-of-the-art convergence rate both in accuracy and dimension dependence.

%This work  studies the problem of sampling from a high-dimensional log-concave probability distributions whose density is known up to a normalization constant. 

A well-known first-order method for sampling from  log-concave probability distributions is the Unadjusted Langevin Algorithm (ULA). This work 
proposes a new annealing step-size schedule for ULA, which allows to prove new convergence guarantees for sampling from a smooth log-concave distribution, which are not covered by existing state-of-the-art convergence guarantees. To establish this result, we derive a new theoretical bound that relates the Wasserstein distance to total variation distance between any two log-concave distributions that complements the reach of Talagrand $T_2$ inequality. Moreover, applying this new step size schedule to an existing constrained sampling algorithm, we show state-of-the-art convergence rates for sampling from a constrained log-concave distribution, as well as improved dimension dependence.  

%improves the state-of-the-art convergence rates for ULA while establishing better dimensional dependence for the worst case convergence. For this purpose, we propose a new annealing step-size schedule for ULA which covers non-strongly log-concave distributions and constrained log-concave distributions.  %Finally, we also apply our new step size schedule to an existing Langevin-based constraint sampling algorithm, and improve the state-of-the-art convergence rate both in accuracy and dimension dependence.

%In the case of a non-strongly log-concave distribution, convergence rate in $2$-Wasserstein ($\W_2$) distance have not yet been established for ULA. By deriving a new theoretical bound that relates the Wasserstein distance to total variation distance between any two log-concave distributions, we prove, for the first time, the convergence of ULA in $\W_2$ distance for sampling from a non-strongly log-concave distribution. By exploiting this result, we also establish a new step-size schedule for ULA that enables to show improved convergence rate in various metrics when applied to non-strongly log-concave distributions. Finally, we also apply our new step size schedule to an existing Langevin-based constraint sampling algorithm, and improve the state-of-the-art convergence rate both in accuracy and dimension dependence.
\end{abstract}

%!TEX root = ../dlula.tex

\section{Introduction}
Let $\der\mu^*(x) \propto e^{-f(x)}\der x$ be a probability measure over $\R^d$, where $f:\R^d \rightarrow \R$ is a convex function with Lipschitz continuous gradient. In order to sample from such distributions,
%LD is important. Conv in W2 is not done
%Sampling techniques are attracting more and more interest in the machine learning community.
first-order sampling schemes based on the discretization of Langevin dynamics and, in particular the Unadjusted Langevin Algorithm (ULA), have found widespread success in various applications~\cite{welling2011bayesian, li2016scalable, patterson2013stochastic, li2016preconditioned}. 

An ever-growing body of literature has been devoted solely to the study of  ULA and its variations~\cite{ahn2012bayesian, chen2015convergence, cheng2017convergence, cheng2017underdamped, dalalyan2017user, durmus2017nonasymptotic, durmus2018analysis, dwivedi2018log, luu2017sampling, welling2011bayesian,ma2015complete}.
%
%This simple framework has given rise to numerous both first-order sampling algorithms, whose convergence properties have been widely studied 
%Without knowing the normalizing constant of $\mu$, 
The ULA iterates are given as 
\begin{equation}
x_{k+1} = x_k - \gamma_{k+1} \nabla f(x_k) + \sqrt{2\gamma_{k+1}} g_k,
\label{Langevin_iter_intro}
\end{equation}
where $\nabla f$ is the gradient of $f$, $\{\gamma_k\}_{k\geq 0}$ is a non-increasing sequence of positive step-sizes, and the entries of $g_k\in \R^d$ are zero-mean and unit-variance Gaussian random variables, independent from each another and everything else.
In its standard form~\eqref{Langevin_iter_intro}, ULA can provably sample from any log-concave and smooth probability measure ~\cite{durmus2017nonasymptotic, durmus2018analysis}.

The recent analysis of ~\cite{durmus2018analysis} studies ULA through the lens of convex optimization. Their analysis shows strong resemblance with the convergence analysis of stochastic gradient descent (SGD) algorithm for minimizing a convex continuously differentiable function $f:\R^d \rightarrow \R$. Starting from $x_0 \in \R^d$, SGD iterates similarly as~\eqref{Langevin_iter_intro}:
\[
x_{k+1} = x_k - \gamma_{k+1} \nabla f(x_k) + \gamma_{k+1} \Theta(x_k),
\]
where $\{\gamma_k\}_{k\geq 0}$ is a non-increasing sequence of positive step-sizes, and $\Theta : \R^d \rightarrow \R^d$ is a stochastic perturbation to $\nabla f$. One way of proving convergence guarantees for this method is to show the following inequality:
\begin{equation}
\begin{split}
2 \gamma_{k+1}(\E[f(x_{k+1})] - f(x^*))& \leq \E\left[ \|x_k - x^*\|_2^2\right]  \\
&- \E \left[\|x_{k+1} - x^*\|_2^2\right] + C \gamma_{k+1}^2
\end{split}
\label{eq:descent-lemma-SGD}
\end{equation}
for some constant $C \geq 0$, $\forall k \geq 0$ and $x^* \in \argmin_{x \in \R^d} f(x)$. From this inequality, and using step size $\gamma_k \propto \frac{1}{\sqrt{k}}$, it is then possible to show convergence, in expectation, of the average iterate $\bar{x}_T = \frac{1}{T} \sum_{t=0}^{T-1} x_t$ to the optimal value, i.e., $\E[f(\bar{x}_T)] - f(x^*) = \mathcal{O}\left(\frac{1}{\sqrt{T}}\right)$.

In their paper, ~\cite{durmus2018analysis} showed a similar descent lemma as ~\eqref{eq:descent-lemma-SGD} for the sequence of generated measures $\{\mu_k\}_{k\geq 0}$ denoting the distributions of the iterates $\{x_k\}_{k\geq 0}$ in ~\eqref{Langevin_iter_intro}, in which the objective gap $\E[f(x_k)] - f(x^*)$ is replaced with the Kullback-Leibler divergence $\KL(\mu_k; \mu^*)$, and the Euclidean distance $\|x_k - x^*\|_2$ is replaced with the $2$-Wasserstein distance $\W_2(\mu_k, \mu^*)$:
\begin{equation}
\begin{split}
2 \gamma_{k+1} \KL(\mu_k; \mu^*) \leq & \W_2^2(\mu_k, \mu^*) - \W_2^2(\mu_{k+1}, \mu^*) \\
& + 2Ld \gamma_{k+1}^2,
\end{split}
\label{eq:descent-lemma-ULA}
\end{equation}
where $L$ is the Lipschitz constant of the gradient of $f$. Then again, using $\gamma_k \propto \frac{1}{\sqrt{k}}$, it is possible to show convergence of the average sample distribution $\bar{\mu}_T = \frac{1}{T} \sum_{t=0}^T \mu_t$ to $\mu^*$ in $\KL$ divergence, with rate $\mathcal{O}\left(\frac{d^3}{\sqrt{T}}\right)$.

In this work, we improve this convergence rate to $\mathcal{O}\left(\frac{d^3}{T^{\frac{2}{3}}}\right)$. To this end, we first establish a new bound that relates the $\W_2$ distance and the $\KL$ divergence between any two log-concave distributions. When applied to inequality~\eqref{eq:descent-lemma-ULA}, this new bound can be exploited to design a new step-size sequence $\{\gamma_k\}_{k \geq 0}$ that allows to derive new convergence rates for ULA.

We introduce a new multistage decaying step size schedule, which proceeds in a double loop fashion by geometrically decreasing the step-size after a certain number of iterations, and that we call Double-loop ULA (DL-ULA). To the best of our knowledge, all existing convergence proof for ULA use either constant, or polynomially decaying step sizes, i.e. of the form $\gamma_k = k^{-\alpha}$ for some $\alpha \geq 0$, and this is the first work introducing a multistage decaying step size for a sampling algorithm. Interestingly, there is precedence to support our approach in that such step decay schedule can improve convergence of optimization algorithms \cite{hazan2014beyond, ge2019step, aybat2019universally, yousefian2012stochastic}. 

\if 0
In this work, we first establish a new bound that relates the $\W_2$ distance and the $\KL$ divergence between any two log-concave distributions. When applied to inequality~\eqref{eq:descent-lemma-ULA}, this new bound can be exploited to design a new step-size sequence $\{\gamma_k\}_{k \geq 0}$ that allows to obtain new convergence rates for ULA. We introduce a new multistage decaying step size schedule, which proceeds in a double loop fashion that we call Double-loop ULA (DL-ULA). By geometrically decreasing the step-size after a certain number of iterations, we show that it is possible to derive new convergence rates in $\KL$ divergence, $\TV$ distance and $\W_2$ distance, e.g., $\mathcal{O}\left(\frac{1}{T^{\frac{2}{3}}}\right)$ rate in $\KL$ divergence, improving upon the $\mathcal{O}\left(\frac{1}{\sqrt{T}}\right)$ rate of ~\cite{durmus2018analysis}.
\fi
%In contrast with optimization, where an equivalent such equality does not hold ...

Our new inequality relating $\KL$ divergence and $\W_2$ distance serves as an alternative to the powerful $T_2$ inequality \cite{gozlan2010transport}, the latter requiring stronger assumptions on the distributions. The literature on Langevin dynamics commonly proves the convergence of an algorithm in $\KL$ divergence and then extends it to the total variation ($\TV$) distance using the famous Pinsker's inequality ~\cite{pinsker1960information, cheng2017convergence, durmus2018analysis}. Our new inequality enables to do the same for extending convergence results to $\W_2$ distance in the case of general log-concave distributions, and hence, might be of independent interest. Note, however, that this inequality applied alone to extend the result of ~\cite{durmus2018analysis} to $W_2$ distance provides a suboptimal convergence rate, and modifying the step-size schedule and the analysis appears to be crucial for improving the rate.

Finally, we apply this multistage strategy to the constrained sampling algorithm MYULA \cite{brosse2017sampling}, which allows us to obtain improved convergence guarantees, both in terms of rate and dimension dependence. This approach provides state-of-the-art convergence guarantees for sampling from a log-concave distribution over a general convex set.

We summarize our contributions as follows:
\begin{itemize}[leftmargin=*]
\item We introduce a variant of the Unadjusted Langevin Algorithm, using a new multistage decaying step-size schedule as well as a clipping step. Our new approach, called DL-ULA, yields new convergence guarantees, that are not covered by existing convergence result (i.e., either better convergence rate or better dimension dependence compared to state-of-the-art results).
%superior convergence rates in various metrics with improved dimension dependence. 
\item We apply our new step-size schedule to an existing Langevin-based constrained sampling algorithm, called MYULA \cite{brosse2017sampling}, and improve its convergence both in terms of iteration and dimension dependences. %\item We introduce the class of smoothable distributions, and design a new sampling scheme based on Langevin dynamics, called HULA, that can be applied for sampling from such distributions, such as constrained sampling on an affine subspace.
%We also improve the state-of-the-art convergence rate for sampling over a bounded convex set, and remove the existing requirement for non-empty interior. 
\item We introduce a new bound relating the $2$-Wasserstein and the $\TV$ distance between any two log-concave distributions.

%, and derive the first convergence guarantee of a slightly modified version of ULA in $2$-Wasserstein distance when the objective distribution is smooth log-concave and has light tail.
%\begin{itemize}
%\item Unconstrained sampling: We improve the state-of-the-art convergence rate and provide the first convergence rate in $\W_2$ in the case of (non-strongly) log-concave distributions.
%\item Constrained sampling: We improve the state-of-the-art convergence rate for sampling over a bounded convex set, and extend the use of ULA to constrained sampling over a set with empty interior (such as affine subspace).
%\end{itemize}
%\item For distributions supported on either a polytope or an affine subspace, we provide projection-free versions of HULA, which removes the projection step as the main computational bottleneck of constrained sampling. 
\end{itemize}

A summary of our convergence rates can be found in Tables~\ref{table:unconstrained} and ~\ref{table:constrained}.

\paragraph{Road map}
In section $3$, we define several metrics on probability measures that we will use, recall some properties of log-concave distributions that we will exploit, as well as some results on convergence of ULA. In section $4$, we present our new extension of ULA for unconstrained sampling, by introducing a new multistage step size schedule. We then prove convergence guarantees by making use of a new bound relating the $\KL$ divergence and the $\W_2$ distance. Finally, in section $5$, we apply this procedure to the existing algorithm MYULA for constrained sampling, and show that it yields improved convergence guarantees, both in terms of convergence rate and dimension dependence.

\section{Related work}

%\anote{a good story would be smooth vs nonsmooth densities. constrained is a special case of non-smooth. consider unifying the story in the related work. having smooth vs nonsmooth and constrained vs unconstrained seems parallel and confuses the reader/reviewer.}

\paragraph{Unconstrained sampling}
Sampling algorithms based on Langevin dynamics have been widely studied \cite{ahn2012bayesian, chen2015convergence, cheng2017convergence, cheng2017underdamped, dalalyan2017user, durmus2018analysis, dwivedi2018log, durmus2017nonasymptotic, luu2017sampling, welling2011bayesian}. Although most convergence rates have been established in the strongly log-concave setting, rates have also been shown for general log-concave distributions, and in particular exhibit larger dimension dependences (see Table~\ref{table:unconstrained}).

Convergence guarantees for ULA applied to a general unconstrained log-concave distribution have been successively improved over the years. To the best of our knowledge, the best existing convergence results are the one obtained by ~\cite{durmus2018analysis} and ~\cite{durmus2017nonasymptotic}, that respectively show $\mathcal{O}(d^{3}\epsilon^{-4})$ and $\mathcal{O}(d^{5}\epsilon^{-2})$ convergence guarantees in $\TV$ distance. In this paper, we improve upon the former one, by showing a $\mathcal{O}(d^{3}\epsilon^{-3})$ convergence rate. This result is not absolutely better than the one of ~\cite{durmus2017nonasymptotic}, but enjoys better dimension dependence.

Until recently, convergence rate in Wasserstein distance had not been proven in the general log-concave setting. Only recently, ~\cite{zou2018stochastic} presented a method based on underdamped Langevin dynamics that provably converges in $\W_2$-distance for a general log-concave distribution. 

In ~\cite{zou2018stochastic}, the authors show a $\mathcal{O}(d^{5.5}\epsilon^{-6})$ convergence rate in $\W_2$ distance for general log-concave distributions. However, they make the assumption that $\E_{X\sim \mu}\l[\|X\|_2^4\r] \leq \bar{U}d^2$ for some scalar $\bar{U}$. However, let $\der \mu(x) \propto e^{-\|x\|_2} \der x$, which is a log-concave distribution. Then, $\E_{X\sim \mu}\l[\|X\|_2^4\r] = \Omega(d^4)$ and their assumption does not hold. For comparison purpose, if we replace it with our weaker Assumption~\ref{assumption:lighttail}, their rate becomes $\mathcal{O}(d^{10.5}\epsilon^{-6})$.

%Recently, \cite{durmus2018analysis} showed convergence of ULA in Kullback-Leibler (KL) divergence and total variation (TV) distance with rates $\mathcal{O}\left(\frac{1}{\epsilon^2}\right)$ and $\mathcal{O}\left(\frac{1}{\epsilon^4}\right)$ respectively, when the target distribution is log-concave. 

%Very recently, \cite{ge2019step} showed that the choice of step decay schedule for the learning rate

\paragraph{Constrained sampling}
Extensions of ULA have been designed in order to sample from constrained distributions \cite{bubeck2018sampling, brosse2017sampling, hsieh2018mirrored, patterson2013stochastic}. In \cite{bubeck2018sampling}, the authors propose to apply ULA, and project the sample onto the constraint at each iteration. They show a convergence rate of $\mathcal{O}(d^{12}\epsilon^{-12})$ in $\TV$ distance for log-concave distributions (i.e., $\mathcal{O}(d^{12}\epsilon^{-12})$ iterations of the algorithm are sufficient in order to obtain an error smaller than $\epsilon$ in $\TV$ distance).

In \cite{brosse2017sampling}, the authors propose to smooth the constraint using its Moreau-Yoshida envelope, and obtain a convergence rate of $\mathcal{O}(d^5\epsilon^{-6})$ in $\TV$ distance when the objective distribution is log-concave. To do so, they penalize the domain outside the constrain directly inside the target distribution via its Moreau-Yoshida envelop. 

The analysis of MYULA in \cite{brosse2017sampling} only holds when the penalty parameter is fixed and chosen in advance, leading to a natural saturation after a certain number of iterations. In this work, we extend this procedure using our Double-loop approach. This allows to obtain improved convergence both in terms of rate and dimension dependence, i.e., $\mathcal{O}(d^{3.5}\epsilon^{-5})$ in $\TV$ distance, and to ensure asymptotic convergence of the algorithm since the penalty is allowed to vary along the iterations.

The special case of sampling from simplices was solved in \cite{hsieh2018mirrored}, introducing Mirrored Langevin Dynamics (MLD). Their work relies on finding a mirror map for the given constraint domain, and then performing ULA in the dual space. However, this method requires strong log-concavity of the distribution in the dual space. Moreover, finding a suitable mirror map for a general convex set is not an easy task.

%The authors in \cite{ma2015complete} proposed a formalism that can, in principle, incorporate any variant of Langevin Dynamics for a given fixed distribution $\mu$. However, our algorithm builds on an homotopy approach, where the sample distribution varies along the iterations, and hence our framework is not covered in \cite{ma2015complete}.

\section{Preliminaries}

%\paragraph{Notation.} 
%Throughout, $\nabla f$ shows the gradient of $f$, $\der x$ denotes the Lebesgue measure, and $\der_S x$ is the restriction of Lebesgue measure to subset $S$. We will use the standard $\mathcal{O}$ notation, and its variant $\wt{O}$ that neglects logarithmic factors.
%We will use $\lesssim$ and $\gtrsim$ to suppress universal constant and logarithmic factors. In particular, $a\approx b$ means that both $a\lesssim b$ and $a\gtrsim b$ hold.

\subsection{Various measures between distributions}

%\paragraph{Distances.}
Let us recall the distances/divergences between probability measures which are used frequently throughout the paper. 
%Consider two probability spaces $(\R^d,\mathcal{B},\mu)$ and $(\R^d,\mathcal{B},\nu)$, where $\mathcal{B}$ is the Borel sigma algebra. 
The Kullback–Leibler (KL) divergence between two probability measures $\mu,\nu$ on $\R^d$ is defined as 
\begin{align}
\KL(\mu;\nu) = \E_{\mu} \log ( \der \mu / \der \nu ),
\end{align}
assuming that $\mu$ is dominated by $\nu$. Their Total Variation (TV) distance is defined as 
\begin{align}
\| \mu - \nu\|_{\TV} = \sup_S | \mu(S) - \nu(S)|,
\label{eq:defn-TV}
\end{align}
where the supremum is over all measurable sets $S$ of $\R^d$.
%, and also takes the equivalent expression 
%\begin{align}
%\| \mu-\nu\|_{\TV} = \frac{1}{2} \int | \der \mu(x) - \der \nu(x) | .
%\label{eq:equivalent-rep-tv}
%\end{align}
%For a set $S\subseteq \R^d$, a useful consequence of \eqref{eq:equivalent-rep-tv} is that 
%\begin{align}
%\l|\Pr_{X\sim \mu}[X\in S] - \Pr_{Y\sim \nu}[ Y\in S]\r| & = \l| \int_{x\in S} \der \mu(x) - \der \nu(x) \r| \nonumber\\
%& \le 2\| \mu-\nu\|_{\TV}.
%\label{eq:useful-tv-bnd}
%\end{align}
Finally, the $2$-Wasserstein (or $\W_2$ for short) distance between $\mu$ and $\nu$ is defined as
\begin{align}
\W_2^2(\mu, \nu) = \inf_{\phi \in \Phi(\mu, \nu)} \int_{\R^d \times \R^d} \|x - y\|^2 d\gamma(x,y),
\label{eq:W-defined}
\end{align}
where $\Phi(\mu, \nu)$ denotes the set of all joint probability measures $\phi$ on $\R^{2d}$ that marginalize to $\mu$ and $\nu$, namely, for all measurable sets $A,B \subseteq \R^d$, $\phi( A \times \R^d ) = \mu(A)$ and $\phi(\R^d \times B) = \nu(B)$.

%TODO : Add comments on the differences between these metrics
The main difference between $\W_2$ and $\TV$ distances is that $\W_2$ associates a higher cost when the difference between the distributions occurs at points that are further appart (in terms of Euclidean distance). Due to this property, errors occurring at the tail of the distributions (i.e., when $\|x\|_2 \rightarrow \infty$) can have a small impact in terms of $\TV$ distance, but a major impact in terms of $\W_2$ distance.

\subsection{Log-concave distributions and tail properties}

We start by recalling the basic property that we will assume on the probability measure. We will then present some known results about this class of measures which will be exploited in the convergence analysis of our algorithm.

\begin{defn}
We say that a function $f:\R^d \rightarrow \R$ has $L$-Lipschitz continuous gradient for $L \geq 0$ if $\forall x, y \in \R^d$,
\[
\|\nabla f(x) - \nabla f(y)\|_2 \leq L \|x-y\|_2.
\]
\end{defn}

\begin{defn}
We say a function $f:\R^d \rightarrow \R$ in convex if $\forall 0 \leq t \leq 1$ and $\forall x, y \in \R^d$, 
\[
f(tx + (1-t)y) \leq tf(x) + (1-t)f(y)
\]
\end{defn}

\begin{defn}
We say that probability measure $\mu \propto e^{-f(x)} \der x$ is logconcave if $f$ is convex. Moreover, we say that $\mu$ is $L$-smooth if $f$ has a $L$-Lipschitz continuous gradient.
\end{defn}

As mentioned previously, bounding the Wasserstein distance between two probability measures requires controlling the error at the tail of the distributions. In order to deal with such a distance without injecting large dependence in the dimension, we make the following assumption on the tail of the target distribution, which is quite standard when working with unconstrained non-strongly log-concave distributions \cite{durmus2018analysis, durmus2017nonasymptotic}:

\begin{assumption} \label{assumption:lighttail}
There exists $\eta > 0, M_\eta > 0$ such that for all $x \in \R^d$ such that $\|x\|_2 \geq M_\eta$,
\[
f(x) - f(x^*) \geq \eta \|x - x^*\|_2
\]
where $x^* = \argmin_{x\in \R^d} f(x)$. Without loss of generality, we will also assume $x^* = 0$ and $f(x^*) = 0$.
\end{assumption}

Note that in the case of a distribution constrained to a set $\Omega \subset \R^d$, this assumption is naturally satisfied with $\eta$ arbitrary, and $M_\eta = \text{diam}(\Omega)$ where $\text{diam}(\Omega)$ is the diameter of $\Omega$.

In order to see how this assumption transfers into a constraint on the tail of the distribution, we recall two following results shown in~\cite{durmus2018analysis} and ~\cite{lovasz2007geometry} respectively.

\begin{lem}
Let $X \in \R^d$ be a random vector from a log-concave distribution $\mu$ satisfying assumption~\ref{assumption:lighttail}. Then
\[
\E_{X \sim \mu}\l[\|X\|_2^2 \r] \leq \frac{2d(d+1)}{\eta^2} + M_\eta^2
\]
\label{lem:moment_bound}
\end{lem}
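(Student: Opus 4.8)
The plan is to bound $\E_{X\sim\mu}[\|X\|_2^2]$ by slicing $\mu$ along the sublevel sets of $f$ and exploiting their convexity via the Brunn--Minkowski inequality. Put $\nu(s):=\operatorname{vol}(\{x\in\R^d:f(x)\le s\})$ for $s\ge0$. Assumption~\ref{assumption:lighttail} (with $x^*=0$, $f(x^*)=0$) gives two elementary facts, each obtained by splitting on whether $\|x\|_2\le M_\eta$: first, $\|x\|_2^2\le\max(M_\eta,f(x)/\eta)^2\le M_\eta^2+f(x)^2/\eta^2$ for every $x$; second, $\{f\le s\}\subseteq B(0,\max(M_\eta,s/\eta))$, hence $\nu(s)\le\operatorname{vol}(B(0,1))(M_\eta+s/\eta)^d$ grows polynomially and every integral below converges. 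For $\psi\in C^1([0,\infty))$ vanishing at infinity one has the layer-cake identity $\int_{\R^d}\psi(f(x))\,\der x=-\int_0^\infty\psi'(s)\nu(s)\,\der s$; with $\psi(s)=e^{-s}$ this gives $Z:=\int_{\R^d}e^{-f}\,\der x=\int_0^\infty e^{-s}\nu(s)\,\der s$, and with $\psi(s)=s^2e^{-s}$ it gives $\int_{\R^d}f(x)^2e^{-f(x)}\,\der x=\int_0^\infty(s^2-2s)e^{-s}\nu(s)\,\der s$. Combining with the first elementary fact,
\[
\E_{X\sim\mu}[\|X\|_2^2]\;\le\;M_\eta^2+\frac{1}{\eta^2}\cdot\frac{\int_0^\infty(s^2-2s)e^{-s}\nu(s)\,\der s}{\int_0^\infty e^{-s}\nu(s)\,\der s},
\]
so it remains to bound the displayed ratio by $2d(d+1)$; in fact I will get $d(d+1)$.

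The crucial structural input is that $s\mapsto\nu(s)^{1/d}$ is concave on $[0,\infty)$. Indeed, if $f(x)\le s_1$ and $f(y)\le s_2$ then convexity of $f$ gives $f(\lambda x+(1-\lambda)y)\le\lambda s_1+(1-\lambda)s_2$, so $\{f\le\lambda s_1+(1-\lambda)s_2\}\supseteq\lambda\{f\le s_1\}+(1-\lambda)\{f\le s_2\}$, and Brunn--Minkowski upgrades this to $\nu(\lambda s_1+(1-\lambda)s_2)^{1/d}\ge\lambda\nu(s_1)^{1/d}+(1-\lambda)\nu(s_2)^{1/d}$. A nonnegative concave function has non-increasing ratio to its argument, so $g(s):=\nu(s)/s^d=\big(\nu(s)^{1/d}/s\big)^d$ is non-increasing on $(0,\infty)$. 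Writing $\nu(s)=s^dg(s)$ and $h(s):=\big(s^2-2s-d(d+1)\big)e^{-s}s^d$, the goal reduces to showing $\int_0^\infty h(s)g(s)\,\der s\le0$.

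To conclude, set $H(T):=\int_0^T h(s)\,\der s$. The Gamma integral gives $H(\infty)=(d+2)!-2(d+1)!-d(d+1)\,d!=d!\,(d+1)\big[(d+2)-2-d\big]=0$, while $H(0)=0$ and $H'(T)=h(T)$ is negative on $\big(0,\,1+\sqrt{d^2+d+1}\,\big)$ and positive afterwards, since the bracket $s^2-2s-d(d+1)$ has exactly one positive root. Hence $H$ decreases from $0$ to its minimum and then increases back to $0$, so $H(s)\le0$ for all $s\ge0$. Integration by parts then yields $\int_0^\infty h(s)g(s)\,\der s=-\int_0^\infty H(s)\,\der g(s)$, which is $\le0$ because $H\le0$ and $g$ is non-increasing (so $\der g$ is a nonpositive measure); the boundary terms vanish because $H(0)=0$, $H(\infty)=0$, and $H(s)g(s)=O(s\,\nu(s))\to0$ as $s\to0$. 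This delivers the ratio bound with constant $d(d+1)\le2d(d+1)$, which proves the lemma.

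The single substantive step is the appeal to Brunn--Minkowski for concavity of $\nu^{1/d}$; everything else is bookkeeping together with the one-line fact that $H$ starts and ends at $0$ with a single sign change of $H'$. The only real nuisance I anticipate is the measure-theoretic hygiene when $f$ has a flat minimum, so that $\nu(0)>0$ and $g(0^+)=+\infty$; but since $h$ carries a factor $s^d$ that cancels the $s^{-d}$ hidden in $g$, every integrand that actually appears is a polynomial times $e^{-s}\nu(s)$, which is bounded near $0$ and integrable, so the manipulations go through cleanly.
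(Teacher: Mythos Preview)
Your argument is correct. The paper itself does not prove this lemma; it merely recalls it as a known result from \cite{durmus2018analysis}. So there is nothing to compare against in the paper proper, and your write-up is strictly more than what the paper provides.

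A few comments on substance. Your route---layer-cake to reduce to a one-dimensional integral against the sublevel volume $\nu(s)$, then Brunn--Minkowski to get concavity of $\nu^{1/d}$ and hence monotonicity of $g(s)=\nu(s)/s^d$, followed by the sign analysis of $H(T)=\int_0^T (s^2-2s-d(d+1))s^de^{-s}\,ds$---is clean and self-contained, and it actually delivers $\E_\mu[f(X)^2]\le d(d+1)$, hence the sharper bound $\E[\|X\|_2^2]\le d(d+1)/\eta^2+M_\eta^2$, improving the paper's constant by a factor of two. The boundary analysis in the integration by parts (where $g$ may blow up at $0$) is handled correctly by your observation that $H(s)g(s)=O(s\,\nu(s))\to 0$, since $H(s)\sim -d\,s^{d+1}$ near the origin; together with $H(T)\to 0$ exponentially fast at infinity and the monotone-convergence justification for $\int H\,dg$, the limiting argument is sound. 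The only cosmetic point is that the ``$\psi\in C^1$ vanishing at infinity'' layer-cake identity you invoke is just Fubini on $\psi(t)=-\int_t^\infty\psi'(s)\,ds$; you use it with $\psi(s)=e^{-s}$ and $\psi(s)=s^2e^{-s}$, both of which satisfy $\psi(\infty)=0$, so this is fine.
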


\begin{lem}
Let $X \in \R^d$ be a random vector from a log-concave distribution $\mu$ such that $\E\l[\|X\|_2^2\r] \leq C^2$. Then, for any $R > 1$, we have
\[
Pr \l(\|X\|_2 > RC \r) < e^{-R+1}
\]
\label{lem:moment-tail-property}
\end{lem}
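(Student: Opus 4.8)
The plan is to derive the bound from the radial concentration inequality for log-concave measures together with an elementary monotonicity argument. Write $Z := \|X\|_2$ and $\sigma := \sqrt{\E[Z^2]}$, so that $\sigma \le C$ by hypothesis. If $\sigma = 0$ then $X = 0$ almost surely and the statement is vacuous, so assume $\sigma > 0$. Because $t \mapsto \Pr(Z > t)$ is non-increasing and $RC \ge R\sigma$, we have $\Pr(Z > RC) \le \Pr(Z > R\sigma)$, and so it suffices to establish
\begin{equation}
\Pr(Z \ge t\sigma) \le e^{1-t} \qquad \text{for all } t \ge 1,
\label{eq:radial-tail}
\end{equation}
and then specialize to $t = R$; note this is the only place the hypothesis $R > 1$ is used, namely to keep $t$ inside the valid range of \eqref{eq:radial-tail}.

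To obtain \eqref{eq:radial-tail} I would invoke the radial tail estimate for log-concave distributions from \cite{lovasz2007geometry}: if $X$ has a log-concave density on $\R^d$, then $\Pr(\|X\|_2 \ge t\sqrt{\E\|X\|_2^2}) \le e^{1-t}$ for every $t \ge 1$, which is exactly \eqref{eq:radial-tail} with our $\sigma$. For a self-contained derivation of this estimate, the standard route reduces it to one dimension via a Borell / Brunn--Minkowski-type argument on dilates of Euclidean balls: choosing a ball $B$ centred at the origin with $\Pr(X \in B) = \theta$ slightly larger than $1/2$, log-concavity forces $\Pr(X \notin tB)$ to decay geometrically in $t$, and a Markov-type bound relating the radius of $B$ to $\sqrt{\E\|X\|_2^2}$ then pins the exponent down to the claimed $e^{1-t}$.

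The substantive ingredient is \eqref{eq:radial-tail} itself — that log-concavity upgrades second-moment control into an exponential tail \emph{with the sharp constant}; one can check on the family $\der\mu(x) \propto e^{-\|x\|_2}\der x$ that no constant materially better than $e^{1-t}$ is available, so a bound obtained merely from polynomial moment growth plus Markov's inequality would be too weak. Everything else — the monotone reduction from $RC$ to $R\sigma$, the elementary inequality $\sigma \le C$, and the degenerate case $\sigma = 0$ — is immediate.
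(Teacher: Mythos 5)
Your proposal is correct and matches the paper's approach: the paper does not actually prove this lemma but simply cites it as a known result from Lov\'asz and Vempala (2007), and you have correctly identified that same source (the radial tail bound $\Pr(\|X\|_2 \ge t\sqrt{\E\|X\|_2^2}) \le e^{1-t}$ for $t\ge 1$) and supplied the trivial monotonicity step needed to relax the hypothesis from equality $\E[\|X\|_2^2]=C^2$ to the inequality $\E[\|X\|_2^2]\le C^2$. The only cosmetic discrepancy is that your argument yields the bound with $\le$ rather than the paper's stated strict $<$, but this reflects an imprecision in the paper's statement rather than a gap in your reasoning.
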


It is thus possible to combine both lemmas to show that any distribution satisfying assumption~\ref{assumption:lighttail} necessarily has a sub-exponential tail. This property will allow us to control the Wasserstein distance in terms of the total variation distance.
\begin{lem}
Let $X$ be a random vector from a log-concave distribution $\mu$ satisfying assumption~\ref{assumption:lighttail}. Then, $\forall R > 1$,
\[
Pr\l(\|X\|_2 > R\sqrt{\frac{2d(d+1)}{\eta^2} + M_\eta}\r) < e^{-R + 1}
\]
\label{lem:light_tail}
\end{lem}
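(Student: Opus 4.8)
The plan is to obtain Lemma~\ref{lem:light_tail} by directly chaining the two preceding results. First, the hypothesis of Lemma~\ref{lem:light_tail} is that $\mu$ is log-concave and satisfies Assumption~\ref{assumption:lighttail}; this is precisely the hypothesis of Lemma~\ref{lem:moment_bound}, so we get a second-moment bound $\E_{X\sim\mu}\l[\|X\|_2^2\r]\le C^2$ with $C^2:=\frac{2d(d+1)}{\eta^2}+M_\eta^2$. Now $\mu$ is log-concave with finite second moment bounded by $C^2$, which is exactly the hypothesis of Lemma~\ref{lem:moment-tail-property}. So, second, we invoke Lemma~\ref{lem:moment-tail-property} with this value of $C$: for every $R>1$,
\[
Pr\l(\|X\|_2 > R\sqrt{\tfrac{2d(d+1)}{\eta^2}+M_\eta^2}\r) = Pr\l(\|X\|_2 > RC\r) < e^{-R+1},
\]
which is the desired conclusion (the value $M_\eta$ versus $M_\eta^2$ under the square root is a harmless bookkeeping matter: when $M_\eta\ge 1$ the two differ only by enlarging the constant, and one may simply state the cleaner $M_\eta^2$ version).

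There is essentially no obstacle in this lemma itself; it is a two-line composition, and the only care needed is to verify that the hypotheses of Lemma~\ref{lem:moment-tail-property} — log-concavity plus a finite second moment — are met, which they are by Lemma~\ref{lem:moment_bound}. The conceptual weight sits in the two lemmas being composed rather than in the composition. For orientation, it is worth noting why this sub-exponential tail is the right object: it is exactly what lets one later dominate a $\W_2$ distance by a $\TV$ distance, by splitting the optimal coupling's cost into a bulk ball of radius $O(R\sqrt{d(d+1)}/\eta)$, on which the cost is at most (diameter)$^2$ times the $\TV$ mass, plus a tail contribution that decays like $e^{-R}$ thanks to Lemma~\ref{lem:light_tail}, and then optimizing over $R$. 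That optimization is the content of the subsequent bound, not of this lemma, so here it suffices to record the tail estimate in the stated form.
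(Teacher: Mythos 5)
Your proof is correct and follows exactly the route the paper intends: the text just before the lemma explicitly says ``It is thus possible to combine both lemmas,'' and your chaining of Lemma~\ref{lem:moment_bound} (which gives $\E[\|X\|_2^2]\le C^2$ with $C^2=\frac{2d(d+1)}{\eta^2}+M_\eta^2$) with Lemma~\ref{lem:moment-tail-property} (which converts a second-moment bound into the sub-exponential tail) is precisely that composition. You are also right that the $M_\eta$ appearing without a square in the lemma's statement is a typo for $M_\eta^2$, and consistently with the rest of the paper (e.g.\ the definition of $M$ in Theorem~\ref{thm:convergence_DL_ULA}) the correct constant is $\sqrt{\frac{2d(d+1)}{\eta^2}+M_\eta^2}$.
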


\subsection{Unadjusted Langevin Algorithm}

Finally, we recall the standard Unadjusted Langevin Algorithm as well as a very useful inequality bounding the $\KL$ divergence between the target distribution and the $k$-th sample distribution.

Consider the probability space $(\R^d,\mathcal{B},\mu^*)$, where $\mathcal{B}$ is the Borel sigma algebra and $\mu^*$ is the target distribution. Suppose that $\mu^*$ is log-concave and  dominated by the Lebesgue measure on $\R^d$, namely, 
\begin{align}
\label{mu-density}
\der \mu^*(x) = C e^{-f(x)} \der x,
\qquad \forall x\in S,
\end{align}
where $C$ is an unknown normalizing constant and the function $f:\R^d\rightarrow\R$ is convex and $\nabla f$ is $L$-Lipschitz continuous. We wish to sample from $\mu^*$ without calculating the normalizing constant $C$. 

A well-known scheme for sampling for such a distribution is called ULA. Initialized at $x_0 \in \R^d$, the iterates of ULA  are 
\begin{equation}
x_{k+1} = x_k - \gamma \nabla f(x_k) + \sqrt{2\gamma} g_k
\label{Langevin_iter}
\end{equation}
for all $k \geq 0$, where $\gamma >0$ is the step-size and the entries of $g_k\in \R^d$ are zero-mean and unit-variance Gaussian random variables, independent from each another and everything else. Let $\mu_k$ be the probability measure associated to iterate $x_k$, $\forall k \geq 0$.  It is well-known that ULA converges to the target measure in $\KL$ divergence. 

More specifically, for $n \geq n_\epsilon=\mathcal{O}(d^3L\epsilon^{-2})$ iterations,  we reach $\KL(\ol{\mu}_{n} ; \mu^*) \leq \epsilon$, where $\ol{\mu}_n = \frac{1}{n} \sum_{k=1}^n \mu_k$ is  the average of the probability measures associated to the iterates $\{x_k\}_{k=0}^n$~\cite{durmus2018analysis}. The averaging sum $\frac{1}{n} \sum_{k=1}^n \mu_k$ is to be understood in the sense of measures, i.e., sampling from the $\bar{\mu}_n$ is equivalent to choosing an index $k$ uniformly at random among $\{1,...,n\}$, and then sampling from $\mu_k$.

To prove this result, the authors showed the following useful inequality that we will exploit in our analysis: 
\begin{lem}
Suppose that we apply the ULA iterations~\eqref{Langevin_iter} for sampling from a smooth log-concave probability measure $\mu^* \propto e^{-f(x)} \der x$ with constant step-size $\gamma > 0$, starting from $x_0 \sim \mu_0$. Then, $\forall n > 0$,
\begin{equation}
\KL(\bar{\mu}_n; \mu^*) \leq \frac{\W_2^2(\mu_0, \mu^*)}{2 \gamma n} + Ld\gamma.
%+ \frac{\W_2^2(\mu_n, \mu^*)}{2 \gamma n} 
\label{descent_lemma}
\end{equation}
\end{lem}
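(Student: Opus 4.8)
The plan is to derive the bound from a per-iteration estimate, a telescoping sum, and convexity of the KL divergence. The substance is the one-step inequality already displayed as~\eqref{eq:descent-lemma-ULA}: for constant step size $\gamma$ one has $2\gamma\,\KL(\mu_k;\mu^*)\le \W_2^2(\mu_k,\mu^*)-\W_2^2(\mu_{k+1},\mu^*)+2Ld\gamma^2$ for every $k\ge 0$, which is a result stated earlier in the excerpt and may be assumed. Granting this, the rest is routine: summing over $k=0,\dots,n-1$ telescopes the Wasserstein terms, and dropping $\W_2^2(\mu_n,\mu^*)\ge 0$ yields $2\gamma\sum_{k=0}^{n-1}\KL(\mu_k;\mu^*)\le \W_2^2(\mu_0,\mu^*)+2Ld\gamma^2 n$; dividing by $2\gamma n$ and invoking convexity of $\nu\mapsto\KL(\nu;\mu^*)$ (the KL divergence is jointly convex, hence convex in its first argument) gives $\KL(\bar{\mu}_n;\mu^*)\le \tfrac1n\sum_k\KL(\mu_k;\mu^*)$ and hence the claim. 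The precise index range in the definition of $\bar{\mu}_n$ only shifts the argument by a single term, which is harmless since $\W_2^2(\mu_n,\mu^*)$ is discarded anyway.

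If a self-contained treatment is wanted, the one-step inequality~\eqref{eq:descent-lemma-ULA} itself can be proved by writing one ULA step as the composition of a deterministic gradient step $y_k=x_k-\gamma\nabla f(x_k)$ followed by the addition of Gaussian noise, $x_{k+1}=y_k+\sqrt{2\gamma}\,g_k$ — the latter being exactly the heat flow run for time $\gamma$. The gradient step is forward Euler for the Wasserstein gradient flow of $\nu\mapsto\E_\nu f$; since $\mu^*$ is log-concave this functional is displacement-convex, and combining its subgradient inequality along $\W_2$-geodesics with $L$-smoothness (to absorb the Euler overshoot $\gamma^2\|\nabla f(x_k)\|^2$) shows that $\W_2^2(\cdot,\mu^*)$ decreases by at least $2\gamma(\E_{\mu_k}f-\E_{\mu^*}f)$ up to an $\mathcal{O}(Ld\gamma^2)$ term. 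The noise step is the Wasserstein gradient flow of the negative differential entropy $-H$, also displacement-convex, and contributes a companion inequality featuring $H(\mu_{k+1})-H(\mu^*)$, with the dimension $d$ entering through the variance of the $d$-dimensional increment. Adding the two, the potential contribution $\E f$ and the entropy contribution $-H$ recombine into $\KL(\cdot;\mu^*)-\KL(\mu^*;\mu^*)=\KL(\cdot;\mu^*)$, and the accumulated second-order error is $2Ld\gamma^2$, which is~\eqref{eq:descent-lemma-ULA}. Equivalently, one may interpolate the step by the diffusion $dX_t=-\nabla f(X_0)\,dt+\sqrt2\,dB_t$ on $t\in[0,\gamma]$, differentiate in time the quantity $\tfrac12\W_2^2$ between the law of $X_t$ and $\mu^*$, extract the dissipation term via geodesic convexity of $\KL(\cdot;\mu^*)$, and bound the error from freezing the drift at $X_0$ using $\|\nabla f(X_t)-\nabla f(X_0)\|\le L\|X_t-X_0\|$ together with $\E\|X_t-X_0\|^2\lesssim t^2\E\|\nabla f(x_k)\|^2+td$, whose $td$ part produces the $Ld\gamma^2$ correction after integrating over $[0,\gamma]$.

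The main obstacle is exactly this one-step inequality: rigorously combining the two displacement-convexity inequalities and tracking the second-order remainder — without recourse to a log-Sobolev inequality, which is unavailable since $\mu^*$ is only log-concave and not strongly log-concave — is where the real work lies, whereas the telescoping and the convexity of $\KL$ in its first argument are bookkeeping. Since the statement is quoted from~\cite{durmus2018analysis}, the cleanest route in this preliminaries section is simply to cite their derivation of~\eqref{eq:descent-lemma-ULA} and carry out only the summation and convexity steps.
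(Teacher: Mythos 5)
The paper does not prove this lemma at all: it is recorded in the preliminaries as a result of \cite{durmus2018analysis}, to be used as a black box. Your first paragraph --- telescoping the per-iteration inequality and then passing to $\bar\mu_n$ via convexity of $\nu\mapsto\KL(\nu;\mu^*)$ (Jensen's inequality for a jointly convex functional) --- is precisely how that result is derived in the cited source, so the reduction is sound. One indexing point is worth tightening: as displayed in the paper, inequality~\eqref{eq:descent-lemma-ULA} has $\KL(\mu_k;\mu^*)$ on the left, so summing over $k=0,\dots,n-1$ literally bounds $\frac1n\sum_{k=0}^{n-1}\KL(\mu_k;\mu^*)$, whereas the preliminaries define $\bar\mu_n=\frac1n\sum_{k=1}^n\mu_k$. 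In Durmus--Majewski the one-step estimate is stated with the post-step law (i.e.\ $\KL(\mu_{k+1};\mu^*)$) on the left-hand side, which makes the telescoping line up exactly with $\W_2^2(\mu_0,\mu^*)$ and the index range $1,\dots,n$; your ``harmless shift'' remark is essentially this observation, but in a careful write-up you should start from that form of the per-iteration bound rather than wave the discrepancy away.

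Your second paragraph --- splitting an ULA step into a gradient step and a heat-flow step, exploiting displacement convexity of $\nu\mapsto\E_\nu f$ and of negative entropy, and tracking the $O(Ld\gamma^2)$ discretization remainder --- is faithful to the structure of the argument in \cite{durmus2018analysis}. It remains a sketch: the recombination of the potential and entropy inequalities into a single $\KL$ inequality, and the bound $\E\|X_t-X_0\|_2^2\lesssim t^2\E\|\nabla f(x_k)\|_2^2+td$, each hide nontrivial estimates. That is appropriate here, since the cleanest route for this preliminaries lemma is, as you note, simply to cite the source and perform only the summation and convexity steps.
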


\section{DL-ULA for unconstrained sampling}

In this section, we present a modified version of the standard ULA for sampling from an unconstrained distribution and provide convergence guarantees. This modified version of ULA involves a new step size schedule as well as a projection step. We will show that it allows to obtain improved convergence rate, as well as the first convergence rate in $\W_2$-distance for overdamped Langevin dynamics.

\subsection{DL-ULA algorithm}

We consider the problem of sampling  from a smooth and unconstrained probability measure $\mu^* \propto e^{-f(x)} \der x$, where $f:\R^d \rightarrow \R$ is differentiable. To this end, we apply the standard ULA in a double-loop fashion, and decrease the step size only between each inner loop. Moreover, each inner loop is followed by a projection step onto some Euclidean ball. The procedure is summarized in Algorithm~\ref{alg:DL_ULA}.

The projection step appears to be crucial in our analysis in order to control the tail of the sample distribution, which is necessary for bounding its Wasserstein distance to the target distribution.

\begin{algorithm}[!t]
   \caption{Double-loop Unadjusted Langevin Algorithm (DL-ULA)}
   \label{alg:DL_ULA}
\begin{algorithmic}
\STATE \textbf{Input}: Smooth unconstrained probability measure $\mu^*$, step sizes $\{\gamma_k\}_{k\ge 1}$, number of (inner) iterations $\{n_k\}_{k\ge 1}$, initial probability measure ${\mu}_0$ on~$\R^d$, and thresholds $\{\tau_k\}_{k \ge 1}$.
\STATE \textbf{Initialization:} Draw a sample $x_0$ from the probability measure ${\mu}_{0}$.%, namely,  $\ol{x}_0 \sim {\ol{\mu}_0}$. 
\FOR{$k=1, \ldots$}
\STATE $x_{k,0} \leftarrow x_{k-1}$
\FOR{$n=1,\ldots,n_k$}
\STATE $x_{k,n+1} \leftarrow x_{k,n} - \gamma_k \nabla f(x_{k,n}) + \sqrt{2\gamma_k} g_{k,n}$, where $g_{k,n} \sim \mathcal{N}(0,I_d)$.
\ENDFOR
\STATE $x_{k} \leftarrow x_{k,i}$, where $i$ is drawn from the uniform distribution on $\{1,\cdots,n_k\}$.
\IF{$\|x_{k}\|_2 > \tau_k$}
\STATE $x_{k}\leftarrow \tau_k x_{k}/\| x_{k}\|_2$. 
\ENDIF
\ENDFOR
\end{algorithmic}
\end{algorithm}

In the following sections, we derive the convergence rate for Algorithms~\ref{alg:DL_ULA}. The global idea for showing the convergence of this algorithm is to use the inequality~\eqref{descent_lemma} recursively between each successive outer loop. We denote as $\bar{\mu}_k$ the average distribution associated to the iterates of outer iteration $k$ just \emph{before} the projection step. Similarly, we denote as $\tilde{\mu}_k$ the same distribution \emph{after} the projection step.

Each outer iteration $k$ uses as a starting point a sample from the previous outer iteration $x_{k,0} \sim \tilde{\mu}_{k-1}$. Therefore, we can apply the inequality~\eqref{descent_lemma} to the outer iteration $k$ to obtain
\begin{equation}
\KL(\bar{\mu}_k; \mu^*) \leq \frac{\W_2^2(\tilde{\mu}_{k-1}, \mu^*)}{2 \gamma_k n_k} + Ld\gamma_k.
\label{DL_descent_lemma}
\end{equation}

In order to unfold the recursion, we must have a bound on $\W_2^2(\tilde{\mu}_{k-1}, \mu^*)$ in terms of $\KL(\bar{\mu}_{k-1}, \mu^*)$. Using the light tail property of log-concave distributions, it is easy to obtain a bound between  $\W_2^2(\tilde{\mu}_{k-1}, \mu^*)$ and  $\W_2^2(\bar{\mu}_{k-1}, \mu^*)$. However, it is not clear how to bound  $\W_2^2(\bar{\mu}_{k-1}, \mu^*)$ by $\KL(\bar{\mu}_{k-1}, \mu^*)$.

As an intermediate step in the convergence analysis, we derive in the next section a bound  between the $\W_2$-distance and the $\TV$-distance between two general log-concave probability measures, which can then be extended to a $\W_2$-$\KL$ bound using Pinsker's inequality.

\subsection{Relation Between $\W_2$- and $\TV$-Distances}

When  $\mu$ and $\nu$ are both compactly supported on an Euclidean ball of diameter $D$, then it is well-known that $\W_2(\mu,\nu) \leq D \sqrt{\|\mu-\nu\|_{\TV}}$~\cite{gibbs2002choosing}. Otherwise, if $\mu$ and $\nu$ are not compactly supported, their fast-decaying tail (Lemma~\ref{lem:light_tail}) allows us to derive a similar bound, as summarized next and proved in Appendix~\ref{sec:WKLbound}.

%\anote{the previous choice of R below did not necessary meet the conditions of old lemma, which is corrected below.}
\begin{lem}\label{lem:cor-now-lem}
\textbf{\emph{($\W_2$-$\TV$ distances inequality)}}
Let $\mu,\nu$ be log-concave probability measures on $\R^d$ both satisfying Assumption~\ref{assumption:lighttail} with $(\eta, M_\eta)$. Then, for some scalar $c \in \R$,
%\begin{equation}
%\W_2^2(\mu, \nu) \leq 
%(2\sqrt{2}+26) \max\l( \b^2\log^2\l(\frac{1}{KL(\mu || \nu)} \r), \sigma^2 \log \l( \frac{1}{KL(\mu||\nu)}\r) , \b^2 ,\sigma^2 \r)
%\sqrt{KL(\mu||\nu)}.
%\end{equation}
\begin{equation}
\begin{split}
&\W_2(\mu, \nu) \leq cd\max\l(\log\l(\frac{1}{\|\mu - \nu\|_{\TV}}\r), 1\r) \sqrt{\| \mu-\nu\|_{\TV}}.
%\sqrt{\frac{2d^2}{\eta^2} + M_\eta} 
\end{split}
\label{eq:corr-result}
\end{equation}
\end{lem}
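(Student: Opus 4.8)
The plan is to build a coupling of $\mu$ and $\nu$ that behaves optimally inside a large Euclidean ball and absorbs the tail mass crudely outside it, then optimize the radius of the ball. Write $\delta := \|\mu-\nu\|_{\TV}$. First I would invoke the light-tail estimate of Lemma~\ref{lem:light_tail}: since both $\mu$ and $\nu$ are log-concave satisfying Assumption~\ref{assumption:lighttail} with the same $(\eta,M_\eta)$, there is a radius scale $\kappa = \Theta\bigl(\sqrt{d(d+1)/\eta^2 + M_\eta}\bigr) = \Theta(d)$ such that $\Pr(\|X\|_2 > R\kappa) < e^{-R+1}$ for either law and any $R>1$. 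Fix a truncation radius $\rho = R\kappa$ to be chosen; let $B = \{\|x\|_2 \le \rho\}$. The standard fact (\cite{gibbs2002choosing}) I would use is that on a set of diameter $D$ one has $\W_2 \le D\sqrt{\TV}$ for the conditional measures; more precisely I will construct a global coupling $\phi$ of $\mu,\nu$ that, restricted to $B\times B$, uses the maximal coupling (agreeing on the common mass $\min(\mu,\nu)$, which costs nothing, and pairing the leftover mass of order $\delta$ within $B$ at cost at most $(2\rho)^2$ per unit mass), and, for the escaping mass — the part of $\mu$ or $\nu$ outside $B$, plus the TV-discrepancy mass — pairs points arbitrarily, paying $\|x-y\|_2^2$.

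The key estimate is then to bound $\int \|x-y\|_2^2\, d\phi(x,y)$. Splitting: the ``inside, matched'' part contributes $0$; the ``inside, mismatched'' part contributes $\le (2\rho)^2\,\delta$; and the ``tail'' part contributes a term I would control by Cauchy--Schwarz together with the sub-exponential tail, i.e. $\E[\|X\|_2^2 \mathbf{1}\{\|X\|_2 > \rho\}] \le \int_{\rho}^\infty \ldots$ which, using $\Pr(\|X\|_2>t)<e^{-t/\kappa+1}$, is of order $\kappa^2(R^2+R+1)e^{-R+1} = \rho^2 \cdot \text{poly}(R)\, e^{-R}$. Collecting, $\W_2^2(\mu,\nu) \lesssim \rho^2\delta + \kappa^2\,\text{poly}(R)e^{-R}$ with $\rho = R\kappa$, hence $\W_2^2(\mu,\nu) \lesssim \kappa^2\bigl(R^2\delta + \text{poly}(R)e^{-R}\bigr)$. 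Now I optimize over $R$: choosing $R = \Theta\bigl(\log(1/\delta)\bigr)$ (assuming $\delta$ small; when $\delta$ is bounded away from $0$ the $\max(\cdot,1)$ and the trivial bound $\W_2 \le \text{const}\cdot d$ handle it) makes $e^{-R} \lesssim \delta$, so both terms are $\lesssim \kappa^2 \log^2(1/\delta)\,\delta$ up to lower-order polynomial-in-$\log$ factors, which can be absorbed into a larger absolute constant inside the $\max(\log(1/\delta),1)^2$. Taking square roots and recalling $\kappa = \Theta(d)$ yields $\W_2(\mu,\nu) \le cd\,\max\bigl(\log(1/\delta),1\bigr)\sqrt{\delta}$, which is exactly \eqref{eq:corr-result}.

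The main obstacle I anticipate is the bookkeeping in the coupling construction — making precise that the TV-mismatch mass of size $\delta$ can genuinely be transported \emph{within} the ball $B$ at cost $O(\rho^2)$ rather than leaking out, and correctly accounting for the overlap between ``mass outside $B$'' and ``mass where $\mu$ and $\nu$ disagree'' so as not to double-count. The clean way is to define $\phi$ as (i) the diagonal pushforward of $\mu \wedge \nu$ (zero cost), plus (ii) an arbitrary coupling of the two residual measures $(\mu - \mu\wedge\nu)$ and $(\nu - \mu\wedge\nu)$, each of total mass $\delta$; for (ii), bound the cost by $2\E_{\mu-\mu\wedge\nu}\|x\|^2 + 2\E_{\nu-\mu\wedge\nu}\|y\|^2$, split each expectation at radius $\rho$ into a piece $\le \rho^2\delta$ and a tail piece handled by Lemma~\ref{lem:light_tail} as above. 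This avoids any delicate inside/outside casework and the rest is the elementary optimization over $R$ sketched above; the remaining routine calculation (the exact polynomial in $R$ from integrating the tail) I would defer to the appendix.
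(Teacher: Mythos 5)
Your proposal is correct and takes essentially the same route as the paper: truncate at a radius $\rho = R\kappa$ with $\kappa = \Theta(d)$, bound the in-ball contribution by $O(\rho^2)\|\mu-\nu\|_{\TV}$, control the out-of-ball contribution via the sub-exponential tail of Lemma~\ref{lem:light_tail}, and choose $R \asymp \log(1/\|\mu-\nu\|_{\TV})$. The only cosmetic difference is that you construct the maximal coupling $\mu\wedge\nu$ plus an arbitrary coupling of the residuals explicitly, while the paper (Lemma~\ref{W-KL-bound-1}) works with a generic coupling, splits on the event $E_R = \{\|X\|_2\le R,\ \|Y\|_2\le R\}$, and then takes the infimum using $\|\mu-\nu\|_{\TV} = \min_\gamma \E[1_{X\ne Y}]$ — these are equivalent accountings leading to the same intermediate bound.
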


\noindent In a sense, \eqref{eq:corr-result} is an alternative to the powerful $T_2$ inequality which does not apply generally in our setting \cite{gozlan2010transport}. Indeed, for $C_\mu>0$, recall that a probability measure $\mu$ satisfies Talagrand's $T_2(C_\mu)$ transportation inequality if 
\begin{align}
\W_2(\mu, \nu) \leq C_\mu \sqrt{\KL(\mu ; \nu)},
\label{eq:T2-recall}
\end{align}
for  any probability measure $\nu$. Above, $C_\mu$ depends only on $\mu$ and, in particular, if $\mu$ is $\kappa$ strongly log-concave,\footnote{If $\der\mu \propto e^{-f}\der x$, then we say that $\mu$ is $\kappa$ is strongly log-concave if $f$ is $\kappa$ strongly convex. } then \eqref{eq:T2-recall} holds with $C_\mu=\mathcal{O}(1/\sqrt{\kappa})$~\cite{gozlan2010transport}. In this  work, the target measures that we consider are not necessarily strongly log-concave measures, leaving us in need for a replacement to \eqref{eq:T2-recall}. 
% and, as we will see shortly, the iterates of Algorithm~\ref{LC-LD} also do not follow strongly log-concave distributions. 
 In our analysis, \eqref{eq:corr-result} serves as a replacement for \eqref{eq:T2-recall}. Indeed, using the Pinsker's inequality \cite{pinsker1960information}, an immediate consequence of \eqref{eq:corr-result} is that
\begin{equation}
\W_2(\mu,\nu) = \wt{\mathcal{O}} (\KL(\mu;\nu)^{\frac{1}{4}} ). 
\label{eq:W2-KL-bound}
\end{equation}
%\begin{proof-sketch}
%The idea of the proof is to split the transportation cost between transport occurring inside and outside a ball of radius $R$. Inside this ball, we use an existing bound between the Wasserstein distance and the TV distance. Then, we use the sub-exponential properties of the distributions in order to bound the remaining transportation cost.
%\end{proof-sketch}

In fact, \eqref{eq:corr-result} might also be of interest in its own right, especially when working with non-strongly log-concave measures. For example, it is easy to use \eqref{eq:W2-KL-bound} to extend the well-known $\mathcal{O}(\epsilon^{-2})$ convergence rate of ULA in KL divergence to a $\wt{\mathcal{O}}(\epsilon^{-8})$ convergence rate in $\W_2$ distance in the non-strongly log-concave setting. To the best of our knowledge, such a result does not exist in the literature.

\begin{table*}[h!]
\centering
\hspace*{-5mm}
\begin{tabular}{|l||*{3}{c|}}\hline
%\backslashbox{Mask}{Decoder}
\makebox[4em]{\small Literature} &\makebox[4em]{$\W_2$}&\makebox[4em]{{$\TV$}}&\makebox[4em]{\small $\KL$}  \\\hline \hline
%\makecell{\cite{cheng2017convergence, durmus2018analysis, dalalyan2017user}} & {\small$\tilde{O}\l(Ld m^{-2}\epsilon^{-2} \r)$ } &  \small $\wt{O} \l(Ld m^{-2}\epsilon^{-2} \r)$ &  \small$\wt{O}\l( Ld m^{-2}\epsilon^{-1}  \r)$  \\\hline
\makecell{\cite{durmus2018analysis}} & - &  \small $\wt{O} \l(Ld^3\epsilon^{-4} \r)$ &  \small$\wt{O}\l(Ld^3\epsilon^{-2} \r)$  \\\hline
\makecell{\cite{durmus2017nonasymptotic}} & - &  \small $\wt{O} \l(L^2d^5\epsilon^{-2} \r)$ &  -   \\\hline
\makecell{\cite{zou2018stochastic}} & $\wt{O} \l(L^2 d^{10.5}\epsilon^{-6}\r)^*$ &  - &  -  \\\hline
\small Our work & \small $\wt{O} \l(Ld^9\epsilon^{-6}\r)$ &  \small $\wt{O} \l(Ld^3\epsilon^{-3} \r)$ &  \small$\wt{O}\l(Ld^3\epsilon^{-\frac{3}{2}} \r)$ \\\hline
\end{tabular}
\vspace{1mm}
\caption{Complexity of sampling from a smooth and log-concave probability distribution. For each metric, the entry corresponds to the total number of iterations to use in order to reach an $\epsilon$ accuracy in the specified metric. ($^*$ For comparison purpose, we extended the proof in \cite{zou2018stochastic} in the case where the distribution satisfies the weaker assumption~\ref{assumption:lighttail}. The dimension dependence is thus different from \cite{zou2018stochastic}).}
%\anote{Paul, I'm unfamiliar with the literature but I was expecting the strongly convex rates to be linear. What am I missing?}
\label{table:unconstrained}
\end{table*}

\subsection{Convergence Analysis of DL-ULA}

Having covered the necessary technical tools above, we now turn our attention to the convergence rate of Algorithm~\ref{alg:DL_ULA}. The final step to take care of is to choose the sequences $\{\gamma_k\}_{k\ge 1}$ and $\{n_k\}_{k\ge 1}$ so as to obtain the best possible convergence guarantees. We summarize our result in Theorem~\ref{thm:convergence_DL_ULA}.

\begin{thm} \label{thm:convergence_DL_ULA}
\textbf{\emph{(iteration complexity of DL-ULA)}}
Let $\mu^*$ be a $L$-smooth log-concave distribution satisfying assumption~\ref{assumption:lighttail}. Suppose that $\mu_0$ also satisfies assumption~\ref{assumption:lighttail}. 
For every $k\ge 1$, let
\begin{align}
n_k = L M^2 d k^2e^{3k} % \approx_{\b,\s} \frac{d  e^{k(\frac{1}{2s}+3)} }{\W_2(\ol{\mu}_0,\mu^*)^{\frac{1}{s}+6}} ,
\end{align}
\begin{align}
\g_k = \frac{1}{Ld} e^{-2k} %\approx_{\b,\s} \frac{\W_2(\ol{\mu}_0,\mu^*)^{\frac{1}{s}+4}}{d e^{k(\frac{1}{2s}+2 )}},
\end{align}
\begin{align}
\tau_k = M k. % \gtrsim_{\b,\s}  \max\l(   \log^2\l( \frac{1}{\epsilon}\r), 1 \r),
\label{eq:thresh-req-thm}
\end{align}
where $M = \sqrt{\frac{2d(d+1)}{\eta^2} + M_\eta^2} = \mathcal{O}(d)$.

Let $\bar{\mu}_k, \tilde{\mu}_k$ be the average distributions associated with the iterates of outer iteration $k$ of DL-ULA using the parameters above, just before and after the projection step respectively. Then, $\forall \epsilon > 0$, we have:
\begin{itemize}
\item After $N^{\KL} = \tilde{\mathcal{O}}(Ld^3\epsilon^{-\frac{3}{2}})$ total iterations, we obtain $\KL(\bar{\mu}_k; \mu^*) \leq \epsilon$.
\item After $N^{\TV} = \tilde{\mathcal{O}}(Ld^3\epsilon^{-3})$ total iterations, we obtain $\|\tilde{\mu}_k - \mu^*\|_{\TV} \leq \epsilon$. 
\item After $N^{\W_2} =  \tilde{\mathcal{O}}(Ld^9\epsilon^{-6})$ total iterations, we obtain $\W_2(\tilde{\mu}_k, \mu^*) \leq \epsilon$.
\end{itemize}
\end{thm}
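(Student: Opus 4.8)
The plan is to unfold the recursion \eqref{DL_descent_lemma} across outer iterations, using the chain of bounds: (i) the descent lemma \eqref{descent_lemma} for each inner loop, (ii) the $\W_2$-$\TV$ inequality \eqref{eq:corr-result} combined with Pinsker to pass from $\KL(\bar\mu_{k-1};\mu^*)$ to $\W_2^2(\bar\mu_{k-1},\mu^*)$, and (iii) the light-tail Lemma~\ref{lem:light_tail} to control the cost of the projection step, i.e.\ to bound $\W_2^2(\tilde\mu_{k-1},\mu^*)$ in terms of $\W_2^2(\bar\mu_{k-1},\mu^*)$ plus a small tail contribution coming from the event $\{\|x\|_2>\tau_{k-1}\}$. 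The first concrete step is to write, for each $k$, $\W_2^2(\tilde\mu_{k-1},\mu^*)\le 2\W_2^2(\bar\mu_{k-1},\mu^*)+2\W_2^2(\tilde\mu_{k-1},\bar\mu_{k-1})$, and to check that with $\tau_k=Mk$ and $M=\mathcal O(d)$ the probability of clipping decays like $e^{-\Theta(k)}$ by Lemma~\ref{lem:light_tail}, so that $\W_2^2(\tilde\mu_{k-1},\bar\mu_{k-1})$ is at most a polynomially-in-$k$-times-$e^{-\Theta(k)}$ term (the displacement of a clipped point is $\mathcal O(\|x\|_2)$ and the fourth-moment-style integral against the sub-exponential tail converges).

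Next I would establish, by induction on $k$, a bound of the form $\KL(\bar\mu_k;\mu^*)\le \delta_k$ where $\delta_k$ decays geometrically, say $\delta_k=\mathcal O(e^{-k})$ up to polynomial factors in $d$ and $k$. Plugging the inductive hypothesis into step (ii) gives $\W_2^2(\bar\mu_{k-1},\mu^*)=\widetilde{\mathcal O}(\delta_{k-1}^{1/2})$ — note the exponent $1/2$ on $\W_2^2$, which is exactly the place where \eqref{eq:corr-result} is essential and where the square-root improvement over a naive $T_2$-type bound enters — and combining with step (iii), \eqref{DL_descent_lemma} yields
\begin{align*}
\KL(\bar\mu_k;\mu^*)\le \frac{\widetilde{\mathcal O}(d^2)\,\delta_{k-1}^{1/2}+\text{(tiny tail term)}}{2\gamma_k n_k}+Ld\gamma_k.
\end{align*}
With $\gamma_k=\frac1{Ld}e^{-2k}$ one has $Ld\gamma_k=e^{-2k}$, and with $n_k=LM^2dk^2e^{3k}$ one gets $\gamma_k n_k=M^2k^2e^{k}$, so the first term is $\widetilde{\mathcal O}(d^2)\delta_{k-1}^{1/2}e^{-k}/(M^2k^2)$. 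Since $M^2=\Theta(d^2)$ this kills the $d^2$, and with $\delta_{k-1}=\mathcal O(e^{-(k-1)})$ (times polynomial factors) the square root gives $e^{-(k-1)/2}$, so the whole expression is $\widetilde{\mathcal O}(e^{-k}e^{-(k-1)/2})=\widetilde{\mathcal O}(e^{-k})$, closing the induction with $\delta_k=\widetilde{\mathcal O}(e^{-k})$. The base case $k=1$ uses that $\mu_0$ satisfies Assumption~\ref{assumption:lighttail}, so $\W_2^2(\mu_0,\mu^*)$ is bounded by a polynomial in $d$ via Lemma~\ref{lem:moment_bound}.

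Finally I would translate the per-outer-iteration bound $\delta_k=\widetilde{\mathcal O}(e^{-k})$ into a total-iteration count. Reaching $\delta_k\le\epsilon$ needs $k=k_\epsilon=\Theta(\log(1/\epsilon))$ outer loops, and the total work is $\sum_{j\le k_\epsilon}n_j=\sum_{j\le k_\epsilon}LM^2dj^2e^{3j}=\widetilde{\mathcal O}(LM^2d\,k_\epsilon^2 e^{3k_\epsilon})=\widetilde{\mathcal O}(Ld^3\,e^{3k_\epsilon})$ since the geometric sum is dominated by its last term and $M^2d=\Theta(d^3)$; with $e^{k_\epsilon}=\Theta(1/\epsilon)$ this is $\widetilde{\mathcal O}(Ld^3\epsilon^{-3/2})$, giving the $N^{\KL}$ claim. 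For $N^{\TV}$, apply Pinsker to $\tilde\mu_k$: I need $\|\tilde\mu_k-\mu^*\|_{\TV}\le\epsilon$, which requires $\KL$ (or rather the relevant divergence of $\tilde\mu_k$) of order $\epsilon^2$; but $\tilde\mu_k$ is the projected measure, so I first relate $\|\tilde\mu_k-\mu^*\|_{\TV}$ to $\|\bar\mu_k-\mu^*\|_{\TV}$ plus the clipping probability $e^{-\Theta(k)}$, then bound $\|\bar\mu_k-\mu^*\|_{\TV}\le\sqrt{\delta_k/2}$ by Pinsker; needing $\sqrt{\delta_k}\le\epsilon$ doubles the number of outer loops relative to the $\KL$ target, i.e.\ $e^{k_\epsilon}=\Theta(\epsilon^{-2})$, yielding $\widetilde{\mathcal O}(Ld^3\epsilon^{-3})$. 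For $N^{\W_2}$, use \eqref{eq:corr-result} directly on $\tilde\mu_k$ (which still satisfies a light-tail bound because it is compactly supported on a ball of radius $\tau_k$, hence Assumption~\ref{assumption:lighttail} holds trivially with $M_\eta=\tau_k$): $\W_2(\tilde\mu_k,\mu^*)=\widetilde{\mathcal O}(d\,\|\tilde\mu_k-\mu^*\|_{\TV}^{1/2})=\widetilde{\mathcal O}(d\,\delta_k^{1/4})$, so $\W_2\le\epsilon$ needs $\delta_k\le\widetilde{\mathcal O}(\epsilon^4/d^4)$, i.e.\ $e^{k_\epsilon}=\widetilde\Theta(d^4\epsilon^{-4})$, and then the total-work formula $\widetilde{\mathcal O}(Ld^3 e^{3k_\epsilon})$ gives $\widetilde{\mathcal O}(Ld^3\cdot d^{12}\epsilon^{-12})$ — which is worse than claimed, so the correct accounting must be more careful: the $\W_2$ bound should be fed back through the recursion rather than applied only at the end, or the $d$-dependence of $\tau_k$ versus $M$ must be tracked so that the polynomial prefactors in \eqref{eq:corr-result} combine more favorably, ultimately producing the stated $\widetilde{\mathcal O}(Ld^9\epsilon^{-6})$.

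The main obstacle I anticipate is precisely this last bookkeeping: making the geometric decay rates of $\gamma_k$, $n_k$, and $\delta_k$ mesh so that (a) the induction closes with a clean geometric $\delta_k$, (b) the dimension factors from $M=\mathcal O(d)$, from the $cd$ prefactor in \eqref{eq:corr-result}, and from $Ld\gamma_k$ all cancel or accumulate into exactly the claimed powers, and (c) the projection/clipping error is provably negligible at every stage — the geometric step decay is doing real work here and the constants $2$ and $3$ in the exponents of $\gamma_k,n_k$ are evidently tuned to balance the $\delta_{k-1}^{1/2}$ feedback term against $Ld\gamma_k$, so verifying that balance is the crux.
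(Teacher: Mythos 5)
Your high-level recursion is the right one and your choices of where Pinsker, the $\W_2$-$\TV$ inequality, and the projection step enter are all on target. But the proposal has two genuine gaps, one of which is what makes your own $\W_2$ accounting fail to match the theorem.

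\textbf{Wrong fixed point of the induction.} You posit $\delta_k=\KL(\bar\mu_k;\mu^*)=\widetilde{\mathcal O}(e^{-k})$ and then check that the recursion gives back $\widetilde{\mathcal O}(e^{-3k/2})$ — which is consistent, but you stopped at a non-sharp fixed point. Iterating, the recursion $\delta_k\lesssim e^{-k}\delta_{k-1}^{1/2}+e^{-2k}$ has fixed point $\delta_k=\widetilde{\mathcal O}(e^{-2k})$ (plug in $\delta_{k-1}=e^{-2(k-1)}$ to verify: $e^{-k}\cdot e^{-(k-1)}=e^{-2k+1}$ matches the $Ld\gamma_k=e^{-2k}$ term). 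This factor of two is exactly what you were missing. With $\delta_k=e^{-2k}$, the $\KL$ target $\delta_k\le\epsilon$ requires $e^k\sim\epsilon^{-1/2}$ (not $\epsilon^{-1}$ as you wrote), giving $N^{\KL}\sim Ld^3 e^{3k}\sim Ld^3\epsilon^{-3/2}$ — note your own sentence ``with $e^{k_\epsilon}=\Theta(1/\epsilon)$ this is $\widetilde{\mathcal O}(Ld^3\epsilon^{-3/2})$'' is arithmetically off; $e^{k_\epsilon}=\Theta(1/\epsilon)$ gives $\epsilon^{-3}$. More importantly, the corrected $\delta_k=e^{-2k}$ also fixes your $\W_2$ count: you then need $d\,\delta_k^{1/4}=de^{-k/2}\le\epsilon$, i.e.\ $e^k\ge(d/\epsilon)^2$, hence $N^{\W_2}=\widetilde{\mathcal O}(Ld^3\cdot(d/\epsilon)^6)=\widetilde{\mathcal O}(Ld^9\epsilon^{-6})$, exactly the stated bound. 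So the ``missing idea'' you flagged at the end of your $\W_2$ paragraph is not some extra feedback trick — it is just the correct geometric decay rate.

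\textbf{The paper organizes the bookkeeping differently, via a normalization.} Where you try to carry the $M=\mathcal O(d)$ factor through the entire induction, the paper first proves a clean version of the theorem for targets with $\E_{X\sim\mu^*}\|X\|_2^2\le 1$, running the induction in \emph{$\TV$ distance} directly: $\|\tilde\mu_k-\mu^*\|_{\TV}\le u_k e^{-k}$ with $u_k$ a bounded, dimension-free sequence. In this normalized regime they do not invoke Lemma~\ref{lem:cor-now-lem} (which has the $cd$ prefactor) but apply Lemma~\ref{W-KL-bound} directly with the specific radius $R=k$, exploiting that the projected iterate $\tilde\mu_{k-1}$ is compactly supported in $B(0,k-1)$ and that $C=1$ by the normalization. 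That gives $\W_2^2(\tilde\mu_{k-1},\mu^*)\le 4k^2\|\tilde\mu_{k-1}-\mu^*\|_{\TV}+\mathcal O(k^2 e^{-k})$ with \emph{no} dimension factor in front. They then recover the general case by the push-forward $\nabla h(x)=x/M$: $\TV$ and $\KL$ are invariant under this bijection, $\W_2$ rescales by $M$, and the smoothness constant becomes $M^2 L$, so all the $d$-dependence enters in one clean rescaling step ($LM^2 d$ in $n_k$, $(M/\epsilon)^6$ for $\W_2$, giving $LM^8 d=Ld^9$). Your direct route is viable in principle, but you would have to be careful that the constant in Lemma~\ref{lem:cor-now-lem} is really $\mathcal O(d)$ for the \emph{clipped} measure $\tilde\mu_k$ (whose second moment scales like $\tau_k^2=M^2k^2$, not $M^2$), and that the logarithmic factors in $k\sim\log(d/\epsilon)$ are absorbed into the $\widetilde{\mathcal O}$; the paper's normalization sidesteps all of this.

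In short: the skeleton is right, the one substantive fix is to push the induction to the fixed point $\delta_k=\widetilde{\mathcal O}(e^{-2k})$ (equivalently, to run the induction in $\TV$ with rate $e^{-k}$ and deduce $\KL=\mathcal O(e^{-2k})$ from the descent lemma), and then all three complexity bounds fall out as stated.
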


\noindent A few remarks about Theorem \ref{thm:convergence_DL_ULA} are in order.

\paragraph{Geometric sequences.} Theorem \ref{thm:convergence_DL_ULA} prescribes a geometric sequence for the choice of $\{\gamma_k\}_k$ and $\{n_k\}_k$. As outer iteration counter $k$ increases, more and more ULA (inner) iterations are performed with the constant step-size $\gamma_k$. Asymptotically, we observe that the step size decreases at a rate $n^{-\frac{2}{3}}$ where $n$ is the total number of ULA iterations. This decaying rate is faster than the standard decaying rate of $n^{-\frac{1}{2}}$ for ULA ~\cite{durmus2018analysis}.

In constrast to convex optimization where a global optimum can provably be reached with constant step-size, ULA cannot converge to the target distribution $\mu^*$ when using constant step-size, since the stationary distribution of ULA iterates ~\eqref{Langevin_iter} when using a constant step size is different from the target distribution. Asymptotically, it is thus desirable to use as small a step-size as possible.

%This double loop approach resembles the doubling trick in online learning ~\cite{rakhlin2013online}. However, in contrast with online learning where this does not affect the convergence rate, we prove that this procedure allows to improve the convergence rate of ULA.

%\paragraph{Benefit of outer loop.} 
%Note that the outer loop of Algorithm \ref{LC-LD} plays an important role in its computational efficiency. In principle, we can achieve the target accuracy of $\epsilon$ with a single application of ULA to sample from $\mu_{\rho_1}$ for a sufficiently large $\rho_1$, namely, by running one outer iteration of  Algorithm \ref{LC-LD}. There are two issues with this approach. First, in the case of non-smooth sampling, such an approach would require to use a very small step size even in the early iterations, leading to slow learning. Second, we will see in Example 1 in Section \ref{sec:examples} that HULA (with its variable step size in outer iterations) improves over standard ULA for sampling from smooth log-concave probability measures. Instead, with a small  outer of iterations (logarithmic in target accuracy), HULA has a course-to-fine hierarchy which provides both numerical stability and speed. The same attributes has led to the overwhelming success of smoothing techniques in convex optimization \cite{nesterov2005smooth}.
%\anote{is this reasonable? what can we add?} 

\paragraph{Projection step.} Although the initial and target distributions are both log-concave, and thus have a sub-exponential tail, the sample distributions $\bar{\mu}_k$ are not generally log-concave, and it is not clear whether they also share the sub-exponential tail property. The projection step at the end of each outer iteration provides a way to enforce the light tail property, so that we can still apply a bound similar to~\eqref{eq:corr-result}. This procedure is made clearer in the proof of the theorem.

This procedure also provides more stability in the early outer iterations where the step-size is the largest. Moreover, since $\lim_{k \rightarrow \infty} \tau_k = \infty$, the projection step asymptotically never applies in practice.

%\paragraph{Convergence in $\KL$ divergence} After the projection step, the clipped distribution . For this reason, we state the convergence rate in $\KL$ divergence using the distribution \emph{before} the projection step, i.e., $\bar{\mu}_k$.

\paragraph{Convergence rate comparison} Table~\ref{table:unconstrained} summarizes various convergence rates of Langevin dynamics based methods applied to general log-concave distributions. We observe that DL-ULA achieves improved convergence guarantees either in terms of rate or dimension dependence. Compared to \cite{durmus2017nonasymptotic}, the convergence rate in $\TV$ distance is worse in terms of accuracy $\epsilon$ but enjoys much better dimension dependence, and is also better in terms of Lipschitz constant dependence.

\section{DL-MYULA for constrained sampling}

We now apply the same multistage idea to an existing constrained sampling algorithm, and show that it allows both to obtain an asymptotic convergence and improved convergence guarantees.

\subsection{DL-MYULA algorithm}

Consider sampling from a log-concave distribution over a convex set $\Omega \subset \R^d$, i.e.,
\begin{equation}
\mu^*(x)
= 
\begin{cases}
e^{-f(x)} / \int_\Omega e^{-f(x')}dx' & x\in \Omega \\
0 & x\notin \Omega.
\end{cases}
\label{eq:constrained-dist}
\end{equation}

In \cite{durmus2018efficient,brosse2017sampling}, the authors propose to reduce this problem to an unconstrained sampling problem by penalizing the domain outside $\Omega$ directly inside the probability measure using its Moreau-Yoshida envelop. More precisely, they propose to sample from the following unconstrained probability measure $\der \mu_\lam(x) \propto e^{-f_\lam(x)} \der x$ where $f_\lam:\R^d \rightarrow \R$ is defined as:
\begin{align}
f_\lam(x) = f(x) + \frac{1}{2\lam}\|x - \text{proj}_\Omega(x)\|_2^2,
\qquad \forall x\in \R^d,
\label{eq:penalized_dist}
\end{align}

where $\text{proj}_\Omega:\R^d \rightarrow \Omega$ is the standard projection operator onto $\Omega$ defined as $\text{proj}_\Omega(x) = \argmin_{y \in \Omega} \|x-y\|_2$. Note that this penalty is easily differentiable as soon as the projection onto $\Omega$ can be computed since $\nabla f_\lam(x) = \nabla f(x) + \frac{1}{\lam} (x - \text{proj}_\Omega(x))$.

By bounding the $\TV$ distance between $\mu_\lam$ and $\mu^*$, they showed that, by sampling from $\mu_\lam$ with $\lam$ small enough, it is possible to sample from $\mu^*$ with arbitrary precision. This algorithm is called Moreau-Yoshida ULA (MYULA).

Building on this approach, we can apply our double loop algorithm, by modifying both the step size as well as the penalty parameter $\lam$ between each inner loop (Algorithm~\ref{alg:DL_MYULA}).

In addition to providing improved rate, as we will show later, our algorithm also has the advantage to use a decreasing penalty parameter $\lam$ so as to guarantee asymptotic convergence of the algorithm to the target distribution. On the other hand, MYULA uses constant penalty $\lam$, and thus saturates after a certain number of iterations. Although this looks like a trivial extension, using varying penalty parameter makes the analysis more challenging since the target distribution of the algorithm is regularly changing.

\begin{algorithm}[t!]
   \caption{DL-MYULA}
   \label{alg:DL_MYULA}
\begin{algorithmic}
\STATE \textbf{Input}: Smooth constrained probability measure $\mu^*$, step sizes $\{\gamma_k\}_{k\ge 1}$, penalty parameters $\{\lam_k\}_{k\ge 1}$, number of (inner) iterations $\{n_k\}_{k\ge 1}$, initial probability measure ${\mu}_0$ on~$\R^d$, and thresholds $\{\tau_k\}_{k \ge 1}$.
\STATE \textbf{Initialization:} Draw a sample $x_0$ from the probability measure ${\mu}_{init}$.%, namely,  $\ol{x}_0 \sim {\ol{\mu}_0}$.
\FOR{$k=1, \ldots$}
\STATE $x_{k,0} \leftarrow x_{k-1}$
\FOR{$n=1,\ldots,n_k$}
\STATE $x_{k,n+1} \leftarrow x_{k,n} - \gamma_k (\nabla f(x_{k,n}) + \frac{1}{\lam_k} (x_{k,n} - \text{proj}_\Omega(x_{k,n}))) + \sqrt{2\gamma_k} g_{k,n}$, where $g_{k,n} \sim \mathcal{N}(0,I_d)$.
\ENDFOR
\STATE $x_{k} \leftarrow x_{k,i}$, where $i$ is drawn from the uniform distribution on $\{1,\cdots,n_k\}$.
\IF{$\|x_{k}\|_2 > \tau_k$}
\STATE $x_{k}\leftarrow \tau_k x_{k}/\| x_{k}\|_2$. 
\ENDIF
\ENDFOR
\end{algorithmic}
\end{algorithm}

\subsection{Convergence analysis of DL-MYULA}

We now analyze the convergence of DL-MYULA. In Algorithm~\ref{alg:DL_MYULA}, both the step-size $\gamma$ and the penalty parameter $\lam$ are decreased after each outer iteration. Therefore, at each outer iteration $k$, we aim to sample from the unconstrained penalized distribution $\der \mu_{\lam_k} \propto e^{-f_{\lam_k}(x)} \der x$ where $f_{\lam_k}$ is defined in equation~\eqref{eq:penalized_dist}. 

Similarly as for DL-ULA, we will use Lemma~\ref{descent_lemma} after each outer iteration. However, since the target distribution of outer iteration is $\mu_{\lam_k}$ instead of $\mu^*$, the inequality reads as follows:
\[
\KL(\bar{\mu}_k; \mu_{\lam_k}) \leq \frac{\W_2^2(\tilde{\mu}_{k-1}, \mu_{\lam_k})}{2 \gamma_k n_k} + Ld\gamma_k.
\]
where we recall that $\bar{\mu}_k$ is the average iterate distribution of outer iteration $k$ just before the projection step, and $\tilde{\mu}_k$ is the one just after the projection step.

In order to use a similar recursion argument as previously, we must thus bound $\W_2(\tilde{\mu}_{k-1}, \mu_{\lam_k})$ by $\W_2(\tilde{\mu}_{k-1}, \mu_{\lam_{k-1}})$. Using the triangle inequality for $\W_2$, we have 
\begin{align*}
\W_2(\tilde{\mu}_{k-1}, \mu_{\lam_k}) \leq &\W_2(\tilde{\mu}_{k-1}, \mu_{\lam_{k-1}}) + \W_2(\mu_{\lam_{k-1}}, \mu^*) \\
&+ \W_2(\mu_{\lam_k}, \mu^*).
\end{align*}

In ~\cite{brosse2017sampling}, the authors showed a bound for $\|\mu_{\lam} - \mu^*\|_{\TV}$ in terms of $\lambda > 0$, and it is easy to extend their proof to obtain a bound for $\W_2(\mu_{\lam}, \mu^*)$ (see Lemma~\ref{lem:W2-constraint-conv} and its proof in Appendix~\ref{sec:smooth-convex-set}). 

In order to prove our result, we make the same assumptions on the constraint set $\Omega$ as in ~\cite{brosse2017sampling}:

\begin{assumption}\label{assumption:constraint}
There exist $r, R, \Delta_1 > 0$ such that
\begin{enumerate}
\item $B(0,r) \subset \Omega \subset B(0,D)$ where $B(0,r_0) = \{y \in \R^d: \|x - y\|_2 \leq r_0 \} \ \forall r_0 > 0$,
\item $e^{\inf_{\Omega^c}(f) - \max_\Omega(f)} \geq \Delta_1$, where $\Omega^c = \R^d \textbackslash \Omega$.
\end{enumerate}
\end{assumption}

\begin{lem}\label{lem:W2-constraint-conv}
Let $\Omega \subset \R^d$ satisfy Assumption~\ref{assumption:constraint}. Then $\forall \lam < \frac{r^2}{8d^2}$,
\begin{equation}
\W_2^2(\mu_\lam, \mu^*) \leq C_\Omega^2 d \sqrt{\lam} 
\end{equation}
for some scalar $C_\Omega > 0$ depending on $D, r$ and $\Delta_1$.
\end{lem}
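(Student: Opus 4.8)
The plan is to bound $\W_2^2(\mu_\lam, \mu^*)$ by first controlling the $\TV$ distance between these two measures using the results already available from \cite{brosse2017sampling}, and then upgrading this to a Wasserstein bound via the $\W_2$--$\TV$ inequality of Lemma~\ref{lem:cor-now-lem}. The first step is to verify that both $\mu^*$ and $\mu_\lam$ satisfy Assumption~\ref{assumption:lighttail} with parameters that are uniform in $\lam$. For $\mu^*$ this is immediate from the remark following Assumption~\ref{assumption:lighttail}: the support is contained in $B(0,D)$, so the assumption holds with $M_\eta = D$ and $\eta$ arbitrary. For $\mu_\lam \propto e^{-f_\lam}$, since $f_\lam(x) = f(x) + \frac{1}{2\lam}\|x - \text{proj}_\Omega(x)\|_2^2$ and $f$ is convex and bounded below on $\Omega$, the quadratic penalty forces $f_\lam(x) - \min f_\lam$ to grow at least linearly in $\|x\|_2$ once $\|x\|_2$ is, say, beyond $2D$; one extracts an explicit $\eta$ and $M_\eta$ depending only on $D, r, \Delta_1$ and, crucially, not on $\lam$ (for $\lam$ below the stated threshold the penalty only makes the tail heavier-penalized, so the linear lower bound is uniform). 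This gives, via Lemma~\ref{lem:light_tail}, a common sub-exponential tail constant $M' = \mathcal{O}(d)$.

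The second step is the $\TV$ bound. In \cite{brosse2017sampling} it is shown that $\|\mu_\lam - \mu^*\|_{\TV} \leq \lam^{1/2}\cdot \text{poly}(d, r^{-1}, \log\Delta_1^{-1})$, at least for $\lam < r^2/(8d^2)$; I would quote this directly (or re-derive it by comparing the normalizing constants and the densities, splitting $\R^d$ into $\Omega$ and $\Omega^c$ and using that on $\Omega^c$ the mass of $\mu_\lam$ is controlled by $\Delta_1$ and the Gaussian-type decay induced by the penalty). Write this as $\|\mu_\lam - \mu^*\|_{\TV} \leq a_\Omega \sqrt\lam$ for a constant $a_\Omega$ depending on $D, r, \Delta_1$.

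The third step is to plug this into Lemma~\ref{lem:cor-now-lem}, which gives
\[
\W_2(\mu_\lam, \mu^*) \leq c d \max\!\l(\log\l(\tfrac{1}{\|\mu_\lam - \mu^*\|_{\TV}}\r), 1\r)\sqrt{\|\mu_\lam - \mu^*\|_{\TV}} \leq c d\, \max\!\l(\log\tfrac{1}{a_\Omega\sqrt\lam}, 1\r) \sqrt{a_\Omega}\, \lam^{1/4}.
\]
Squaring, $\W_2^2(\mu_\lam, \mu^*) \leq c^2 a_\Omega d^2 \l(\max(\log\tfrac{1}{a_\Omega\sqrt\lam},1)\r)^2 \sqrt\lam$. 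The statement as written asks for $C_\Omega^2 d\sqrt\lam$ with no logarithmic factor and only one power of $d$, so the final bookkeeping step is to absorb the logarithmic factor and one power of $d$; either the intended statement carries a $\widetilde{\mathcal O}$ (hiding logs) and a $d^2$ — in which case I would just rename — or, more likely, one uses that the threshold $\lam < r^2/(8d^2)$ makes $\log(1/\sqrt\lam) = \mathcal{O}(\log d + \log(1/r))$, which for the downstream application in Theorem~\ref{thm:convergence_DL_ULA}-style proofs is swept into the $\widetilde{\mathcal O}$, and one power of $d$ comes from a sharper form of Lemma~\ref{lem:cor-now-lem} in this compactly-supported-target regime (where $\mu^*$ itself lives on $B(0,D)$, so the $\W_2$--$\TV$ bound for $\mu^*$ vs.\ $\mu_\lam$ only needs the tail of $\mu_\lam$, not a product of two tail radii). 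I expect the main obstacle to be exactly this last reconciliation: making the $\lam$-uniform tail estimate for $\mu_\lam$ quantitatively tight enough that the final constant genuinely depends only on $D, r, \Delta_1$ and the dimension dependence collapses to a single $d$ (times logarithmic factors hidden in $C_\Omega$ or in a $\widetilde{\mathcal O}$), rather than the $d^2$ that a naive application of Lemma~\ref{lem:cor-now-lem} would produce.
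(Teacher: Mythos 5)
Your approach is genuinely different from the paper's, and it almost works, but it lands on a strictly weaker bound. The paper's proof does not go through the $\W_2$--$\TV$ inequality at all: it uses Villani's Theorem 6.15 to write
\[
\W_2^2(\mu_\lam, \mu^*) \leq 2\int_{\R^d} \|x\|_2^2\, |\mu^*(x) - \mu_\lam(x)|\,\der x,
\]
splits the integral into the contribution from $\Omega^c$ (where $\mu^*$ vanishes) and from $\Omega$ (where the difference is a mismatch of normalizing constants), and bounds each piece by a direct, Brosse-style Gaussian tail computation. This gives a clean bound $\W_2^2(\mu_\lam,\mu^*)\le C_\Omega^2 d\sqrt{\lam}$ with no logarithmic factor. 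Your route instead detours through the $\TV$ distance and then upgrades via Lemma~\ref{lem:cor-now-lem} (or more precisely Lemma~\ref{W-KL-bound}). That is a legitimate alternative strategy and it is in the same spirit as the rest of the paper, but it introduces an unavoidable factor of $\log^2\bigl(1/\|\mu_\lam - \mu^*\|_{\TV}\bigr)$. Since this logarithm grows without bound as $\lam\to 0$, it cannot be absorbed into a constant $C_\Omega$ depending only on $D,r,\Delta_1$; your method proves $\W_2^2(\mu_\lam,\mu^*)=\wt{\mathcal{O}}(d\sqrt{\lam})$, not the stated polylog-free bound. This is a genuine (if benign) gap: for the downstream Theorem~\ref{thm:convergence_DL_MYULA} a $\wt{\mathcal{O}}$ would be harmless, but it is not what Lemma~\ref{lem:W2-constraint-conv} claims.

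There are also two bookkeeping slips worth flagging. First, the available $\TV$ bound is $\|\mu_\lam - \mu^*\|_{\TV} \le C' d\sqrt{\lam}$ (see the citation to \cite{bubeck2018sampling} in the appendix), not $a_\Omega\sqrt\lam$; you dropped a factor of $d$. Second, your observation that a ``sharper form'' of Lemma~\ref{lem:cor-now-lem} applies in the compactly supported regime is correct in spirit and is in fact essential to rescue the dimension count: Lemma~\ref{lem:cor-now-lem}'s explicit $cd$ prefactor comes from using Lemma~\ref{lem:moment_bound}'s generic second-moment bound $\E\|X\|_2^2 \le 2d(d+1)/\eta^2 + M_\eta^2$, but here $\mu^*$ lives on $B(0,D)$ and $\mu_\lam$ has $D$-scale exponential tails (Lemma~\ref{lem:lighttail_constrained}), so the constant $C$ in Lemma~\ref{W-KL-bound} is $\mathcal{O}(D)$, dimension-independent. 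Applying Lemma~\ref{W-KL-bound} directly with $C=\mathcal{O}(D)$ rather than quoting Lemma~\ref{lem:cor-now-lem} verbatim is what reconciles the dimension dependence: $\mathcal{O}(D^2)\cdot\log^2(\cdots)\cdot C' d\sqrt\lam = \wt{\mathcal{O}}(d\sqrt\lam)$. You gestured at this but did not carry it out, and the claim that the final constant depends ``only on $D,r,\Delta_1$'' is still false because of the surviving $\lam$-dependent log.
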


The proof of the previous Lemma is given in Appendix~\ref{sec:smooth-convex-set}. Using these results, the convergence proof is then very similar as for DL-ULA, and is summarized in Theorem~\ref{thm:convergence_DL_MYULA}, whose proof can be found in Appendix~\ref{sec:convergence-constrained}.

\begin{thm} \label{thm:convergence_DL_MYULA} \textbf{\emph{(iteration complexity of DL-MYULA)}}
Let $\Omega \subset \R^d$ be a convex set satisfying Assumption~\ref{assumption:constraint} and $\mu^*$  be a log-concave distribution given by ~\eqref{eq:constrained-dist} where $f$ has $L$-Lipschitz continuous gradient. For every $k \geq 1$, let
\begin{align}
\lam_k = \frac{1}{\frac{8d^2}{r^2} + de^{2k}} % \approx_{\b,\s} \frac{e^{\frac{k}{2s}}}{\W_2(\ol{\mu}_0,\mu^*)^{\frac{1}{s}}},
\end{align}
\begin{align}
n_k = Ld k^2e^{5k} % \approx_{\b,\s} \frac{d  e^{k(\frac{1}{2s}+3)} }{\W_2(\ol{\mu}_0,\mu^*)^{\frac{1}{s}+6}} ,
\end{align}
\begin{align}
\g_k = \frac{1}{Ld} e^{-4k} %\approx_{\b,\s} \frac{\W_2(\ol{\mu}_0,\mu^*)^{\frac{1}{s}+4}}{d e^{k(\frac{1}{2s}+2 )}},
\end{align}
\begin{align}
\tau_k = Dk % \gtrsim_{\b,\s}  \max\l(   \log^2\l( \frac{1}{\epsilon}\r), 1 \r),
\label{eq:thresh-req-thm}
\end{align}
for every $k\ge 1$.
% Above, the subscript $\b,\s$ suppresses the dependence on $\b,\s$ for clarity, with the exact expressions  deferred to the proof. 
%Suppose also that $\tau$ is sufficiently large, as specified in (\ref{eq:assump-on-R}). 
%\log(\frac{d^{\frac{2\alpha}{1+4s}}}{\epsilon^2})
Then, $\forall \epsilon > 0$, we have:
\begin{itemize}
\item After  $N^{\TV} = \mathcal{O}\left(d^{3.5} \epsilon^{-5} \right)$ total iterations, we obtain $\|\hat{\mu}_K - \mu^*\|_{\TV} \leq \epsilon$.
\item After $N^{\W_2} = \tilde{\mathcal{O}}\left(d^{3.5} \epsilon^{-10} \right)$ total iterations, we obtain $\W_2(\hat{\mu}_K, \mu^*) \lesssim \epsilon$. 
\end{itemize}
\end{thm}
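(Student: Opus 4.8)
The plan is to mimic the convergence analysis of DL-ULA (Theorem~\ref{thm:convergence_DL_ULA}), but carrying along the extra error term coming from the moving penalized target $\mu_{\lam_k}$. First I would set up the recursion. For each outer iteration $k$, the inner loop is just ULA with constant step-size $\gamma_k$ applied to the smooth log-concave potential $f_{\lam_k}$, whose gradient is $(L + \lam_k^{-1})$-Lipschitz; applying Lemma~\ref{descent_lemma} gives
\[
\KL(\bar\mu_k; \mu_{\lam_k}) \le \frac{\W_2^2(\tilde\mu_{k-1}, \mu_{\lam_k})}{2\gamma_k n_k} + (L + \lam_k^{-1}) d\,\gamma_k .
\]
Then, as in the excerpt, I would use the triangle inequality for $\W_2$ together with Lemma~\ref{lem:W2-constraint-conv} to replace $\W_2(\tilde\mu_{k-1},\mu_{\lam_k})$ by $\W_2(\tilde\mu_{k-1},\mu_{\lam_{k-1}})$ up to an additive term of order $C_\Omega\sqrt{d}\,\lambda_{k-1}^{1/4} + C_\Omega\sqrt{d}\,\lambda_{k}^{1/4}$. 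Next I would bound $\W_2(\tilde\mu_{k-1},\mu_{\lam_{k-1}})$ in terms of $\KL(\bar\mu_{k-1};\mu_{\lam_{k-1}})$: the projection step onto $B(0,\tau_{k-1})$ enforces the light-tail property on $\tilde\mu_{k-1}$ (via Lemma~\ref{lem:light_tail}), $\mu_{\lam_{k-1}}$ is log-concave and light-tailed, so Lemma~\ref{lem:cor-now-lem} and Pinsker give $\W_2(\tilde\mu_{k-1},\mu_{\lam_{k-1}}) = \wt{\mathcal O}(d\cdot \KL(\bar\mu_{k-1};\mu_{\lam_{k-1}})^{1/4})$, plus the $\W_2$-cost of the projection itself, which is controlled because $\tau_{k-1}=Dk$ grows while the tail of $\bar\mu_{k-1}$ decays.

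With these ingredients the recursion becomes, schematically, $\KL(\bar\mu_k;\mu_{\lam_k}) \le a_k \big(\wt{\mathcal O}(d\,\KL(\bar\mu_{k-1};\mu_{\lam_{k-1}})^{1/4}) + b_k\big)^2 + c_k$, where $a_k = (2\gamma_k n_k)^{-1}$, $c_k = (L+\lam_k^{-1})d\gamma_k$, and $b_k$ collects the $\lambda$-dependent and projection terms. The second step is to plug in the prescribed schedules $\lam_k = (8d^2/r^2 + de^{2k})^{-1}$, $n_k = Ldk^2 e^{5k}$, $\gamma_k = (Ld)^{-1}e^{-4k}$, $\tau_k = Dk$ and verify by induction that $\KL(\bar\mu_k;\mu_{\lam_k})$ decays geometrically, essentially like $e^{-k}$ up to polynomial-in-$d$ and polynomial-in-$k$ factors. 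Concretely one checks that $c_k = \wt{\mathcal O}(d e^{-2k})$ (since $\lam_k^{-1} = \mathcal O(de^{2k})$, so $\lam_k^{-1}d\gamma_k = \mathcal O(e^{-2k}/L)\cdot L = \mathcal O(e^{-2k})$ wait — more carefully $\lam_k^{-1}d\gamma_k \approx de^{2k}\cdot d \cdot (Ld)^{-1}e^{-4k} = (d/L)e^{-2k}$), that $a_k b_k^2$ and $a_k \cdot d^2\KL_{k-1}^{1/2}$ are both $\wt{\mathcal O}(e^{-2k})$ given the induction hypothesis $\KL_{k-1} = \wt{\mathcal O}(d e^{-k})$, so that the hypothesis propagates. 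Once $\KL(\bar\mu_K;\mu_{\lam_K}) \le \epsilon'$ and $\W_2^2(\mu_{\lam_K},\mu^*) \le C_\Omega^2 d\sqrt{\lam_K} \le \epsilon'$, Pinsker plus the triangle inequality give $\|\hat\mu_K - \mu^*\|_{\TV}$ and $\W_2(\hat\mu_K,\mu^*)$ small, with $\epsilon' \asymp \epsilon^2$ for TV (Pinsker) and $\epsilon' \asymp \epsilon^4$ for $\W_2$ (via Lemma~\ref{lem:cor-now-lem}). Finally I would sum the total iteration count: $N = \sum_{k=1}^K n_k = \wt{\mathcal O}(n_K) = \wt{\mathcal O}(Ld e^{5K})$, and since reaching accuracy $\epsilon$ requires $e^{K} \asymp \mathrm{poly}(d)/\epsilon^{\alpha}$ with $\alpha = 1$ for TV and $\alpha = 2$ for $\W_2$, substituting yields $N^{\TV} = \mathcal O(d^{3.5}\epsilon^{-5})$ and $N^{\W_2} = \wt{\mathcal O}(d^{3.5}\epsilon^{-10})$; I would track the powers of $d$ carefully ($M$ and $D$ are $\mathcal O(d)$, $\lam_K^{-1} = \mathcal O(d^2)$ at saturation, etc.) to land the $d^{3.5}$.

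The main obstacle I expect is controlling the recursion cleanly in the presence of \emph{two} simultaneously varying quantities — the step-size $\gamma_k$ and the penalty $\lam_k$ — because the "target moves" between outer iterations, so one cannot simply telescope a single Lyapunov functional $\KL(\bar\mu_k;\mu^*)$; instead one must track $\KL(\bar\mu_k;\mu_{\lam_k})$ and pay a triangle-inequality price $\W_2(\mu_{\lam_{k-1}},\mu_{\lam_k})$ each step, which must be dominated by the geometric decay. Making the bookkeeping work requires the schedules to be tuned so that three competing terms — the discretization bias $(L+\lam_k^{-1})d\gamma_k$, the "contraction of the initial $\W_2$" term $\W_2^2(\tilde\mu_{k-1},\mu_{\lam_k})/(2\gamma_k n_k)$, and the penalty-drift term — all decay at the same geometric rate $e^{-k}$; the delicate point is that the fourth-root loss in Lemma~\ref{lem:cor-now-lem} means the $\W_2$-to-$\KL$ conversion degrades the rate, so $n_k$ must grow fast enough ($e^{5k}$) to compensate, and one must verify the induction hypothesis is self-consistent with these exact exponents. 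A secondary technical nuisance is justifying that the projection step's $\W_2$-contribution is negligible: this needs a tail bound on $\bar\mu_k$ (not itself log-concave), which one obtains by pushing the light-tail estimate through the ULA recursion or by a moment bound on the inner iterates, and then noting $\tau_k = Dk \to \infty$ so the truncation mass is super-polynomially small.
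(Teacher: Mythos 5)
Your overall strategy matches the paper's: a recursion via the ULA descent lemma for the moving target $\mu_{\lam_k}$, the $\W_2$ triangle inequality to absorb the penalty drift $\W_2(\mu_{\lam_{k-1}},\mu^*)+\W_2(\mu_{\lam_k},\mu^*)$ via Lemma~\ref{lem:W2-constraint-conv}, a $\W_2$-to-(divergence) conversion enabled by the truncation step, and a geometric-decay induction tuned so that $n_k\propto e^{5k}$ compensates the fourth-root degradation. The bookkeeping differs slightly --- you propagate $\KL(\bar\mu_k;\mu_{\lam_k})$, while the paper runs the induction on $\|\tilde\mu_k-\mu_{\lam_k}\|_{\TV}$ (applying Pinsker once up front) --- but this is immaterial.

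There is one genuine gap. You assert that ``$\mu_{\lam_{k-1}}$ is log-concave and light-tailed'' and propose to invoke Lemma~\ref{lem:cor-now-lem}, but neither step is available off the shelf. Lemma~\ref{lem:cor-now-lem} requires \emph{both} measures to be log-concave satisfying Assumption~\ref{assumption:lighttail}; the truncated iterate distribution $\tilde\mu_{k-1}$ is not log-concave. The correct tool is the more elementary Lemma~\ref{W-KL-bound-1} (with $R=Dk$), which needs only the explicit truncation radius for one side and a tail bound for the other. But that tail bound for $\mu_{\lam_{k-1}}$ is itself the missing ingredient: because $\mu_\lam$ is supported on all of $\R^d$ and its Assumption~\ref{assumption:lighttail} constants a priori depend on $\lam$, one must show a sub-exponential tail $\Pr_{X\sim\mu_\lam}[\|X\|\ge R]\le\sigma e^{-R/D}$ with $\sigma$ \emph{uniform in $\lam$} on the range $\lam\le r^2/(8d^2)$. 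The paper proves this as a dedicated auxiliary result (Lemma~\ref{lem:lighttail_constrained}) by a direct Gaussian-like integral estimate using the geometry $B(0,r)\subset\Omega\subset B(0,D)$ and the incomplete-Gamma function; without it the induction constants cannot be controlled. Relatedly, your plan to bound the projection mass of $\bar\mu_k$ by ``pushing the light-tail estimate through the ULA recursion or by a moment bound on the inner iterates'' is the hard way; the paper's route is the short triangle-inequality argument
\begin{align}
\Pr_{X\sim\bar\mu_k}[\|X\|_2\ge\tau_k] \le \Pr_{X\sim\mu_{\lam_k}}[\|X\|_2\ge\tau_k] + \|\bar\mu_k-\mu_{\lam_k}\|_{\TV},
\end{align}
which feeds the already-controlled recursion quantity back in and only needs the target's (not the iterate's) tail. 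With Lemma~\ref{lem:lighttail_constrained} in hand and the projection handled this way, the rest of your plan goes through exactly as you sketched.
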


We make a few comments about this convergence result.

\paragraph{Smoothness of $\mu_{\lam_k}$} One can notice that outer iterations in DL-MYULA are longer than in DL-ULA. In order to explain this choice, first observe that the Lipschitz constant associated with the penalized distribution $\mu_\lam$ grows as $\mathcal{O}\l(\frac{1}{\lam} \r)$ as $\lam$ goes to $0$. As $k$ increases and $\lam_k$ decreases, $\mu_{\lam_k}$ becomes less and less smooth.  Thus, for ULA to succeed in approximating $\mu_{\lam_k}$, the step size $\gamma_k$ of ULA iterations reduces accordingly, and the number of iterations increases. 

The choice for $\lam_k$ ensures that $\lam_k < \frac{r^2}{8d^2}$ as required for Lemma~\ref{lem:W2-constraint-conv} to be applicable.

%\paragraph{Decaying penalty} As stated previously, our convergence result allows for using a decaying penalty parameter, instead of a fixed one as required in ~\cite{brosse2017sampling}. Since the choice of $\lam$ impacts the smoothness of the distribution, 
\paragraph{Convergence rate comparison} Table~\ref{table:constrained} summarizes convergence rates in $\TV$ distance for various first-order constrained sampling algorithms. We can see that DL-MYULA outperforms existing approaches, both in terms of rate and dimension dependence.

\begin{table}[t]
\centering
\begin{tabular}{|l||*{3}{c|}}\hline
%\backslashbox{Mask}{Decoder}
\makebox[5em]{\small Algorithm} & \makebox[5em]{\small TV} &  \makebox[5em]{\small \makecell{Literature}} \\\hline \hline
PLMC  & \small $d^{12} \wt{O} \l(\epsilon^{-12} \r)$ & \cite{bubeck2018sampling} \\\hline
MYULA & \small $d^{5} \wt{O} \l(\epsilon^{-6} \r)$  &\cite{brosse2017sampling}  \\\hline
DL-MYULA & \small $d^{3.5} \wt{O} \l(\epsilon^{-5} \r)$ & Our work \\\hline
\end{tabular}
\caption{Upper bounds on the number of iterations required in order to guarantee an error smaller than $\epsilon$ in $\TV$ distance for various constrained sampling algorithms.}
\label{table:constrained}
\end{table}

%!TEX root = ../dlula.tex

\section{Conclusion}

In this work, we proposed and analyzed a new step-size schedule for the well-known Unadjusted Langevin Algorithm. Our approach works by applying ULA successively with constant step-size, and by geometrically decreasing it after a certain number of iterations. Exploiting a new result on the relation between the $2$-Wasserstein distance and the $\TV$ distance of two log-concave distributions, we were able to prove new convergence guarantees for this procedure. We also applied our approach to an existing first-order constrained sampling, and showed improved convergence guarantees, both in terms of rate and dimension dependence.

\section{Acknowledgements}

This project has received funding from the European Research Council (ERC) under the European Union's Horizon 2020 research and innovation programme (grant agreement $\text{n}\circ$ 725594 - time-data).

This work was supported by the Swiss National Science Foundation (SNSF) under  grant number 407540$\_$167319.

This project  was sponsored by the Department of the Navy, Office of Naval Research (ONR)  under a grant number N62909-17-1-2111.

This work was supported by Hasler Foundation Program: Cyber Human Systems (project number 16066).

\bibliography{refs}
\bibliographystyle{icml2020}

\newpage
\appendix
\onecolumn
%!TEX root = ../dlula.tex

\section{Proof of Lemma~\ref{lem:cor-now-lem} \label{sec:WKLbound}}

Before proving Lemma~\ref{lem:cor-now-lem}, we first prove some intermediate Lemmas.

\begin{lem}
Let $\mu, \nu$ be any two distributions. Then, $\forall R > 0$, we have
\begin{align*}
\W_2^2(\mu, \nu) \leq &4R^2 \|\mu - \nu \|_{\TV} + 2 \E_{X \sim \mu}\l[ \|X\|_2^2 1_{\{\|X\|_2 > R\}}  \r] + 2R^2 \E_{X \sim \mu}\l[  1_{\{\|X\|_2 > R\}} \r] \\
& + 2 \E_{Y \sim \nu}\l[ \|Y\|_2^2 1_{\{\|Y\|_2 > R\}}  \r] + 2R^2 \E_{Y \sim \nu}\l[  1_{\{\|Y\|_2 > R\}} \r]
\end{align*}
where $1_{\{\|X\|_2 > R\}}$ is the indicator function of the set $B(0,R)^c = \{x\in \R^d : \|x\|_2 > R\}$.
\label{W-KL-bound-1}
\end{lem}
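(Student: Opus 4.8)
\textbf{Proof plan for Lemma~\ref{W-KL-bound-1}.}

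The plan is to construct an explicit coupling of $\mu$ and $\nu$ and bound its transport cost. First I would use the well-known identity that $\|\mu-\nu\|_{\TV}$ equals the minimal probability of disagreement over all couplings: there is a coupling $(X,Y)$ of $\mu$ and $\nu$ such that $\Pr(X \neq Y) = \|\mu-\nu\|_{\TV}$. (Concretely, on the event of ``agreement'' one samples from the common mass $\min(\mu,\nu)$, and on the complementary event one samples $X$ and $Y$ independently from the normalized positive and negative parts of $\mu-\nu$.) This coupling $\phi$ is a feasible element of $\Phi(\mu,\nu)$, so $\W_2^2(\mu,\nu) \le \E_\phi\|X-Y\|_2^2 = \E\big[\|X-Y\|_2^2\, 1_{\{X\neq Y\}}\big]$, since the integrand vanishes when $X=Y$.

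Next I would split the cost on the disagreement event according to whether the points are inside or outside the ball $B(0,R)$. On the sub-event $\{X\neq Y, \|X\|_2\le R, \|Y\|_2\le R\}$, use $\|X-Y\|_2^2 \le (\|X\|_2+\|Y\|_2)^2 \le 4R^2$, and bound the probability of this sub-event by $\Pr(X\neq Y) = \|\mu-\nu\|_{\TV}$, which yields the term $4R^2\|\mu-\nu\|_{\TV}$. On the remaining part of the disagreement event, at least one of $\|X\|_2 > R$ or $\|Y\|_2 > R$ holds; here apply $\|X-Y\|_2^2 \le 2\|X\|_2^2 + 2\|Y\|_2^2$ and drop the indicator $1_{\{X\neq Y\}}$ (bounding it by $1$) together with a union bound over $\{\|X\|_2>R\}$ and $\{\|Y\|_2>R\}$. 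After distributing, the $2\|X\|_2^2$ piece contributes $2\,\E_\mu[\|X\|_2^2 1_{\{\|X\|_2>R\}}]$ plus, from the $\{\|Y\|_2>R\}$ portion, a cross term that is bounded crudely by $2R^2\,\E_\mu[1_{\{\|X\|_2>R\}}]$ — wait, more carefully one keeps $2\|X\|_2^2 1_{\{\|Y\|_2>R\}}$, but since this is awkward, the cleaner route is: on $\{\|X\|_2>R\text{ or }\|Y\|_2>R\}$ write $\|X-Y\|_2^2 \le 2\|X\|_2^2+2\|Y\|_2^2$ and bound the whole thing by $2\|X\|_2^2 1_{\{\|X\|_2>R\}} + 2\|Y\|_2^2 1_{\{\|Y\|_2>R\}} + 2\|X\|_2^2 1_{\{\|Y\|_2>R,\|X\|_2\le R\}} + 2\|Y\|_2^2 1_{\{\|X\|_2>R,\|Y\|_2\le R\}}$, and the last two are $\le 2R^2 1_{\{\|Y\|_2>R\}}$ and $\le 2R^2 1_{\{\|X\|_2>R\}}$ respectively. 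Taking expectations gives exactly the five terms in the statement.

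The main obstacle — really the only delicate point — is organizing the case split so that the ``mixed'' situations (one coordinate large, the other small) are absorbed into the $R^2\,\E[1_{\{\cdot\}}]$ truncation-probability terms rather than producing extra terms; everything else is the standard $(a+b)^2 \le 2a^2+2b^2$ and union-bound bookkeeping. Once this decomposition is in place, taking expectations under $\phi$ and using $\phi$'s marginals yields the claimed inequality directly. In the subsequent lemmas of the appendix this bound will then be combined with the sub-exponential tail estimate of Lemma~\ref{lem:light_tail} (optimizing over $R$) to derive \eqref{eq:corr-result}.
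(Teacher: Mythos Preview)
Your proposal is correct and matches the paper's proof essentially step for step: the same split into $E_R=\{\|X\|_2\le R,\|Y\|_2\le R\}$ versus $E_R^c$, the bound $\|X-Y\|_2^2\le 4R^2$ on $E_R\cap\{X\ne Y\}$, then $\|X-Y\|_2^2\le 2\|X\|_2^2+2\|Y\|_2^2$ on $E_R^c$ with the indicator identity $1_{E_R^c}=1_{\{\|X\|_2>R\}}+1_{\{\|X\|_2\le R\}}1_{\{\|Y\|_2>R\}}$ to absorb the mixed cases into the $R^2$-truncation terms. The only cosmetic difference is that you fix the maximal (TV-optimal) coupling up front, whereas the paper carries an arbitrary coupling $\gamma$ through the estimates and minimizes at the end via $\|\mu-\nu\|_{\TV}=\min_\gamma \E[1_{X\ne Y}]$; the resulting bounds are identical.
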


\begin{proof}
Let $X \sim \mu, Y \sim \nu$. $\W_2$-distance between probability measures $\mu$ and $\nu$ can be interpreted as the most cost-efficient transport plan to transform $\mu$ into $\nu$, defined as 
\begin{align}
\W_2^2(\mu,\nu) = \min_{(X,Y)\sim \g} \E \|X-Y\|_2^2,
\label{eq:defn_w}
\end{align}
where the minimization is over all probability measures $\g$ that marginalize to $\mu,\nu$, namely, 
\begin{align}
\g(A\times \R^d) = \mu(A),
\qquad 
\g(\R^d\times B) = \nu(B),
\end{align}
for any measurable sets $A,B\subseteq \R^d$.  For a fixed such measure $\g$, let us decompose the right-hand side of \eqref{eq:defn_w} as 
\begin{align}
\E \| X-Y\|_2^2 & = 
\E \l[\| X-Y \|_2^2 1_{E_R} \r] + \E \l[ \| X-Y \|_2^2 1_{{E}^c_R} \r],
\label{eq:decomp}
\end{align}
where $1_{E_R}$ stands for the indicator of the event $E_R=\{ \|X\|_2 \le R,\, \|Y\|_2 \le R \}$. Above, ${E_R^c}$ is the complement of $E_R$. 
For the first expectation on the right-hand side above, we write that
\begin{align}
\E \l[ \|X-Y\|_2^2 1_{E_R} \r]  
& \le 4R^2 \E \l[  1_{X\ne Y} 1_{E_R} \r] \nonumber\\
& \le 4R^2 \E [ 1_{X \ne Y}].
\label{eq:first-leg-decomp}
\end{align}
For the second expectation on the right-hand side of \eqref{eq:decomp}, we write that 
\begin{align}
\E\l[ \| X-Y\|_2^2 1_{{E}^c_R } \r] 
&  \le2 \E\l[ \| X\|_2^2  1_{{E}^c_R } \r] + 2 \E\l[ \| Y\|_2^2  1_{{E}^c_R } \r].
\qquad ( (a+b)^2 \le 2a^2 +2b^2 ) 
\label{eq:xysymm}
\end{align}
Let us in turn focus on, say, the first expectation on the right-hand side of \eqref{eq:xysymm}. Since 
$$
1_{{E}^c_R} = 1_{\{\|X\|_2>R\}} + 1_{\{\|X\|_2 \le R\}} 1_{\{\|Y\|_2 > R\}},
$$ 
we can write that 
\begin{align}
 \E \l[ \|X\|_2^2 1_{{E}^c_R } \r] 
& = \E \l[ \|X\|_2^2 1_{\{\|X\|_2 > R\}}  \r] + \E\l[ \|X\|_2^2  1_{\{\|X\|_2 \le R\}} 1_{\{\|Y\|_2 > R\}} \r] \nonumber\\
& \le \E \l[ \|X\|_2^2 1_{\{\|X\|_2 > R\}}  \r] + R^2 \E\l[  1_{\{\|Y\|_2 > R\}} \r].
\label{eq:brk-down-indic}
\end{align}

Bounding $ \E \l[ \|Y\|_2^2 1_{{E}^c_R } \r] $ similarly, we obtain
\begin{align*}
\E \| X-Y\|_2^2 \leq & 4R^2 \E [ 1_{X \ne Y}] + 2 \E_{X \sim \mu}\l[ \|X\|_2^2 1_{\{\|X\|_2 > R\}}  \r] + 2R^2 \E_{X \sim \mu}\l[  1_{\{\|X\|_2 > R\}} \r] \\
& + 2 \E_{Y \sim \nu}\l[ \|Y\|_2^2 1_{\{\|Y\|_2 > R\}}  \r] + 2R^2 \E_{Y \sim \nu}\l[  1_{\{\|Y\|_2 > R\}} \r]
\end{align*}
 
The result is then obtained by minimizing the above inequality over all coupling $\gamma$, and using the fact that $\|\mu - \nu\|_{\TV} = \min_{(X,Y)\sim \gamma} \E [ 1_{X \ne Y}]$ ~\cite{gibbs2002choosing}.
\end{proof}

\begin{lem}
\label{W-KL-bound}
Suppose that  $\mu,\nu$  both satisfy Assumption~\ref{assumption:lighttail} with $\eta, M_\eta>0$ and such that $\E_{X\sim \mu}\l[\|X\|_2^2\r], \E_{Y\sim \nu}\l[\|Y\|_2^2\r] \leq C^2$. Then, for any $R\ge C$,
\begin{align}
& \W_2^2(\mu, \nu) \leq
4R^2 \|\mu - \nu\|_{\TV} + 8 \l(  R^2 + RC+ C^2 \r) e^{-\frac{R}{C} + 1}.
\label{eq:nu-loosely-subexp}
\end{align}
%where $\|\mu - \nu\|_{\TV}$ is the total variation distance between $\mu$ and $\nu$, and 
%where $\b=\max(\b_\mu,\b_\nu)$, $\sigma = \max(\sigma_\mu,\sigma_\nu)$. 
\end{lem}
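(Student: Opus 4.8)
The plan is to start from the bound in Lemma~\ref{W-KL-bound-1} and control its four tail terms using the sub-exponential decay that log-concavity forces on $\mu$ and $\nu$. The only input beyond Lemma~\ref{W-KL-bound-1} is Lemma~\ref{lem:moment-tail-property}: since $\mu$ is log-concave with $\E_{X\sim\mu}\l[\|X\|_2^2\r]\le C^2$, substituting $t=RC$ there gives
\[
\Pr(\|X\|_2 > t) < e^{-t/C+1}\qquad\text{for all } t\ge C,
\]
the inequality being trivially true at $t=C$, and likewise for $Y\sim\nu$. This already handles the zeroth-order tail terms, since $\E_{X\sim\mu}\l[1_{\{\|X\|_2>R\}}\r]=\Pr(\|X\|_2>R)<e^{-R/C+1}$ whenever $R\ge C$.

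Next I would treat the second-moment tail terms via the layer-cake identity
\[
\E\l[\|X\|_2^2 1_{\{\|X\|_2>R\}}\r]
= R^2\,\Pr(\|X\|_2>R) + \int_R^\infty 2s\,\Pr(\|X\|_2>s)\,\der s,
\]
which comes from writing $\|X\|_2^2 1_{\{\|X\|_2>R\}} = 1_{\{\|X\|_2>R\}}\int_0^{\|X\|_2}2s\,\der s$ and applying Fubini. Plugging in the tail bound above (valid since $s\ge R\ge C$ throughout the integration region) and computing the elementary integral $\int_R^\infty 2s\,e^{-s/C}\,\der s = 2(CR+C^2)e^{-R/C}$ by parts, I obtain
\[
\E\l[\|X\|_2^2 1_{\{\|X\|_2>R\}}\r] \le \l(R^2 + 2CR + 2C^2\r)e^{-R/C+1},
\]
and the identical bound for $\nu$.

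Finally I would substitute the four estimates into Lemma~\ref{W-KL-bound-1}: the $X$-contribution $2\E\l[\|X\|_2^2 1_{\{\|X\|_2>R\}}\r] + 2R^2\E\l[1_{\{\|X\|_2>R\}}\r]$ is at most $(4R^2+4CR+4C^2)e^{-R/C+1}$; doubling for the symmetric $Y$-contribution yields exactly $8(R^2+CR+C^2)e^{-R/C+1}$, which together with the $4R^2\|\mu-\nu\|_{\TV}$ term gives \eqref{eq:nu-loosely-subexp}. There is no real obstacle here: the steps are all routine, and the only points deserving a little care are keeping track of the constants so they collapse to the stated $8(R^2+CR+C^2)$, and noting that the hypothesis $R\ge C$ is precisely what makes Lemma~\ref{lem:moment-tail-property} applicable on the whole tail $s\ge R$.
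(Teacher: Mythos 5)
Your proof is correct and follows essentially the same route as the paper's: apply Lemma~\ref{lem:moment-tail-property} to get the sub-exponential tail bound, use the layer-cake/Fubini decomposition to estimate the second-moment tail terms $\E\l[\|X\|_2^2\,1_{\{\|X\|_2>R\}}\r]$ and obtain $(R^2 + 2CR + 2C^2)e^{-R/C+1}$, then substitute the four tail estimates into Lemma~\ref{W-KL-bound-1} and collect constants to get $8(R^2+CR+C^2)e^{-R/C+1}$. Your remark about the role of $R \ge C$ in making the tail bound applicable on all of $[R,\infty)$ is a useful clarification that the paper leaves implicit.
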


\begin{proof}
We start from the result of Lemma~\ref{W-KL-bound-1}. The goal is then to bound the each term on the right hand side using the tail property of log-concave distributions (Lemma~\ref{lem:moment-tail-property}).

We have
\begin{align}
\E\l[ \| X\|_2^2 1_{\{\|X\|_2 > R\}} \r]
& = 2 \int_{\|x\|_2 > R} \int_{z\in \R} 1_{\{ \|x\|_2 \ge z\}}  z  dz d\mu(x) \nonumber\\
& = 2 \int_{z\in \R} zdz \int_{\|x\|_2 \ge \max(R,z)} d\mu(x) \nonumber\\
& = 2\int_{z\in \R} z  \Pr\l[ \|X\|_2 \ge \max(R,z)\r] dz \nonumber\\
& = 2 \Pr[\|X\|_2 \ge R ] \int_{0}^R z  dz + 2\int_{R}^{\infty} z \Pr[\|X\|_2 \ge z  ] dz \nonumber\\
& \leq  R^2 e^{-\frac{R}{C}+1} + 
2 \int_{R}^{\infty} z e^{-\frac{z}{C}+1} dz  \nonumber\\
& \leq \l(  R^2 + 2C R+ 2C^2 \r) e^{-\frac{R}{C}+1}.
\label{eq:second-leg-decomp}
\end{align}

Similarly, we have
\begin{equation}
 \E[  1_{\{\|X\|_2 >R\}} ] = \Pr[ \|X|_2 > R] \leq e^{-\frac{R}{C}+1}. 
\label{eq:third-leg-decomp}
\end{equation}

Doing the same calculation for $Y$ and replacing the terms in Lemma~\ref{W-KL-bound-1} provides the result.
\end{proof}

Using the previous Lemma, it is now easy to prove the result of Lemma~\ref{lem:cor-now-lem}.

\noindent\emph{Proof of Lemma~\ref{lem:cor-now-lem}.}
Let us apply Lemma~\ref{W-KL-bound} using
$$
R = C \max\l(\log\l(\frac{1}{\| \mu - \nu\|_{\TV}} \r), 1\r).
$$ 
With this choice of $R$ and if $\|\mu-\nu\|_{\TV} \leq 1$, note that 
\begin{align}
e^{-\frac{R}{C}}  = \| \mu- \nu \|_{\TV}. 
\label{eq:min-case1}
\end{align}
On the other hand, if $\|\mu-\nu\|_{\TV} > 1$, then 
\begin{align}
e^{-\frac{R}{C}}  \le 1 \le \|\mu-\nu\|_{\TV}. 
\label{eq:min-case2}
\end{align}
Thus, Lemma~\ref{W-KL-bound} gives
\begin{align}
\W_2^2(\mu,\nu) & \leq 4C^2 \max\l(\log^2 \l( \frac{1}{\|\mu-\nu\|_{\TV}}\r), 1 \r) \|\mu-\nu\|_{\TV} + 8 C^2 \l(1 + \max\l(\log \l( \frac{1}{\|\mu-\nu\|_{\TV}}\r), 1\r)\r)^2 \|\mu-\nu\|_{\TV} \nonumber\\
& \leq  20 C^2 \max\l(\log^2 \l( \frac{1}{\|\mu-\nu\|_{\TV}}\r), 1\r) \|\mu-\nu\|_{\TV}.
\label{eq:exactW2TV}
\end{align}
Lemma~\ref{lem:cor-now-lem} then follows from taking the square root of ~\eqref{eq:exactW2TV} and using $C^2 = \frac{d(d+1)}{\eta^2} + M_\eta$ according to Lemma~\ref{lem:moment_bound}.
\\

\section{Proof of Theorem~\ref{thm:convergence_DL_ULA}}

We start by showing the following result in the case where the target distribution $\mu^*$ satisfies $\E_{X\sim \mu^*}\l[\|X\|_2^2\r] \leq 1$.

\begin{thm} \label{thm:convergence_DL_ULA_light_tail}
\textbf{\emph{(iteration complexity of DL-ULA)}}
Let $\mu^*$ be a $L$-smooth log-concave distribution such that $\E_{X\sim \mu^*}\l[\|X\|_2^2\r] \leq 1$. Suppose that $\mu_0$ also satisfies $\E_{X\sim \mu_0}\l[\|X\|_2^2\r] \leq 1$. 
For every $k\ge 1$, let
\begin{align}
n_k = L d k^2e^{3k} % \approx_{\b,\s} \frac{d  e^{k(\frac{1}{2s}+3)} }{\W_2(\ol{\mu}_0,\mu^*)^{\frac{1}{s}+6}} ,
\end{align}
\begin{align}
\g_k = \frac{1}{Ld} e^{-2k} %\approx_{\b,\s} \frac{\W_2(\ol{\mu}_0,\mu^*)^{\frac{1}{s}+4}}{d e^{k(\frac{1}{2s}+2 )}},
\end{align}
\begin{align}
\tau_k = k. % \gtrsim_{\b,\s}  \max\l(   \log^2\l( \frac{1}{\epsilon}\r), 1 \r),
\label{eq:thresh-req-thm}
\end{align}
Then, $\forall \epsilon > 0$, we have:
\begin{itemize}
\item After $N^{\KL} = \tilde{\mathcal{O}}(Ld\epsilon^{-\frac{3}{2}})$ total iterations, we obtain $\KL(\tilde{\mu}_k; \mu^*) \leq \epsilon$ where $\tilde{\mu}_k$ is the distribution associated to the iterates of outer iteration $k$ just \emph{before} the projection step.
\item After $N^{\TV} = \tilde{\mathcal{O}}(Ld\epsilon^{-3})$ total iterations, we obtain $\|\tilde{\mu}_k - \mu^*\|_{\TV} \leq \epsilon$. 
\item After $N^{\W_2} =  \tilde{\mathcal{O}}(Ld\epsilon^{-6})$ total iterations, we obtain $\W_2(\tilde{\mu}_k, \mu^*) \leq \epsilon$.
\end{itemize}
\end{thm}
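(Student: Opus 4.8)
The plan is to iterate the descent inequality \eqref{DL_descent_lemma} over outer rounds while, at each round, converting the $\KL$-bound it produces back into a $\W_2$-bound usable at the next round. Write $b_k := \KL(\bar\mu_k;\mu^*)$ and $a_k := \W_2^2(\tilde\mu_k,\mu^*)$. Since $\E_{\mu^*}\|X\|_2^2\le 1$, Lemma~\ref{lem:moment-tail-property} applies to $\mu^*$ with $C=1$, so $\Pr_{\mu^*}(\|X\|_2>R)<e^{-R+1}$ for $R>1$, and the computation \eqref{eq:second-leg-decomp} gives $\E_{\mu^*}[\|X\|_2^2\,1_{\{\|X\|_2>R\}}]\le (R^2+2R+2)e^{-R+1}$. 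Outer round $k$ is exactly $n_k$ ULA steps with constant step $\gamma_k$ started from $x_{k,0}\sim\tilde\mu_{k-1}$ and targeting $\mu^*$, so \eqref{descent_lemma} yields
\[
b_k \;\le\; \frac{a_{k-1}}{2\gamma_k n_k} + Ld\gamma_k \;=\; \frac{a_{k-1}}{2k^2e^{k}} + e^{-2k},
\]
using $\gamma_kn_k=k^2e^{k}$ and $Ld\gamma_k=e^{-2k}$; the base case uses $a_0=\W_2^2(\mu_0,\mu^*)\le 2\E_{\mu_0}\|X\|_2^2+2\E_{\mu^*}\|X\|_2^2\le 4$ via the product coupling.

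Next I would bound $a_k$ in terms of $b_k$. The subtlety is that $\bar\mu_k$ is not log-concave, so neither \eqref{eq:corr-result} nor Lemma~\ref{W-KL-bound} applies to it; the clipping step exists precisely to route around this via the fully general Lemma~\ref{W-KL-bound-1}. By Pinsker, $\|\bar\mu_k-\mu^*\|_{\TV}\le\sqrt{b_k/2}$. The clipping only moves mass lying outside $B(0,\tau_k)$, so $\|\tilde\mu_k-\bar\mu_k\|_{\TV}\le\Pr_{\bar\mu_k}(\|X\|_2>\tau_k)\le\Pr_{\mu^*}(\|X\|_2>\tau_k)+\|\bar\mu_k-\mu^*\|_{\TV}$, which with $\tau_k=k$ is at most $e^{-k+1}+\sqrt{b_k/2}$; hence $\delta_k:=\|\tilde\mu_k-\mu^*\|_{\TV}\le e^{-k+1}+\sqrt{2b_k}$. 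Applying Lemma~\ref{W-KL-bound-1} to $(\tilde\mu_k,\mu^*)$ with $R=\tau_k=k$, the two tail terms of $\tilde\mu_k$ vanish (it is supported in $B(0,k)$) while the two tail terms of $\mu^*$ are controlled by the light-tail estimates above, so
\[
a_k \;\le\; 4k^2\delta_k + 2(k^2+2k+2)e^{-k+1} + 2k^2e^{-k+1} \;\le\; c_1k^2e^{-k} + c_2k^2\sqrt{b_k}
\]
for absolute constants $c_1,c_2$.

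Chaining the two displays gives a scalar recursion $b_k\le(1+c_1/2)e^{-2k}+(c_2/2)e^{-k}\sqrt{b_{k-1}}$, which I would solve by induction: if $b_{k-1}\le Be^{-2(k-1)}$ then $e^{-k}\sqrt{b_{k-1}}\le\sqrt B\,e\,e^{-2k}$, whence $b_k\le\big(1+c_1/2+(c_2/2)\sqrt B\,e\big)e^{-2k}\le Be^{-2k}$ once $B$ is a large enough absolute constant (the first rounds, where the $\mu^*$-tail bound is only vacuously true, are absorbed by the crude estimate $b_k=\mathcal{O}(1)=\mathcal{O}(e^{-2k})$). Hence $b_k=\mathcal{O}(e^{-2k})$, so $\delta_k=\mathcal{O}(e^{-k})$ and $a_k=\mathcal{O}(k^2e^{-k})$, i.e.\ $\W_2(\tilde\mu_k,\mu^*)=\mathcal{O}(ke^{-k/2})$. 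The total iteration count through round $K$ is $N_K=\sum_{k\le K}n_k=\Theta(LdK^2e^{3K})$; taking $K=\Theta(\log(1/\epsilon))$ with $e^{K}=\Theta(\epsilon^{-1/2})$ makes $b_K\le\epsilon$ with $N_K=\wt{\mathcal{O}}(Ld\epsilon^{-3/2})$, taking $e^K=\Theta(\epsilon^{-1})$ makes $\delta_K\le\epsilon$ with $N_K=\wt{\mathcal{O}}(Ld\epsilon^{-3})$, and taking $e^K=\wt\Theta(\epsilon^{-2})$ makes $\W_2(\tilde\mu_K,\mu^*)\le\epsilon$ with $N_K=\wt{\mathcal{O}}(Ld\epsilon^{-6})$.

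The hard part is precisely the non-log-concavity of $\bar\mu_k$: the $T_2$ inequality is unavailable (no strong log-concavity) and the substitute \eqref{eq:corr-result} needs log-concavity of both arguments, while a bare second-moment bound on $\bar\mu_k$ does not control the $\W_2$-relevant tail mass. The clipping resolves this by making $\tilde\mu_k$ compactly supported, so that in Lemma~\ref{W-KL-bound-1} only the (log-concave, $\mathcal{O}(1)$-second-moment) tails of $\mu^*$ remain; the price is the extra $\|\tilde\mu_k-\bar\mu_k\|_{\TV}$ term, small because $\mu^*$ is light-tailed and $\bar\mu_k$ is $\TV$-close to it. The residual circularity — a small tail for $\bar\mu_k$ needs small $b_k$, which needs small $a_{k-1}$, which needs small $\delta_{k-1}$, which needs small $b_{k-1}$ — is broken by the induction above, and everything rests on having chosen $\{\gamma_k,n_k,\tau_k\}$ so that the induced recursion $b_k\lesssim e^{-2k}+e^{-k}\sqrt{b_{k-1}}$ contracts geometrically in $k$.
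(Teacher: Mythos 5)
Your proof is correct and follows essentially the same route as the paper's. Both proofs chain the one-pass descent bound~\eqref{descent_lemma} with a $\W_2$-from-$\TV$ bound (Lemma~\ref{W-KL-bound-1}/\ref{W-KL-bound}), use Pinsker and the clipping step to control $\|\tilde\mu_k-\bar\mu_k\|_{\TV}$ and the $\mu^*$-tails, and close the loop by induction on $k$ to show geometric decay.

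The differences are cosmetic. The paper carries its induction on the TV bound $\|\tilde\mu_k-\mu^*\|_{\TV}\le u_k e^{-k}$ with an explicit recursively-defined sequence $\{u_k\}$ that it then shows is uniformly bounded, whereas you carry the induction on the KL bound $b_k\le Be^{-2k}$ and absorb constants into a single sufficiently large $B$ and into finitely many early rounds; these are equivalent via Pinsker. You also correctly note that $\tilde\mu_k$ is \emph{not} log-concave and therefore invoke the fully general Lemma~\ref{W-KL-bound-1}, using the compact support of $\tilde\mu_k$ to zero out its tail terms, rather than the derived Lemma~\ref{W-KL-bound}, whose hypotheses (Assumption~\ref{assumption:lighttail} on both arguments) $\tilde\mu_k$ does not literally satisfy; the paper applies Lemma~\ref{W-KL-bound} to $W_2(\tilde\mu_{k-1},\mu^*)$ but justifies it by the same observation, so this is a precision of wording on your part rather than a new idea. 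Your crude tail estimate $(k^2+2k+2)e^{-k+1}$ versus the paper's consolidated $16ek^2e^{-k}$ differ only by absorbed constants; likewise the paper's $e^{-k}$ in~\eqref{eq:projection_tail} should strictly be $e^{-k+1}$ as you write, but this affects nothing asymptotically. The final translation of $K$ into total iterations $N_K=\Theta(LdK^2e^{3K})$ and the choices $e^K=\Theta(\epsilon^{-1/2}),\,\Theta(\epsilon^{-1}),\,\wt\Theta(\epsilon^{-2})$ match the paper's derivation of the three complexity bounds.
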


\begin{proof}
Recall that in Algorithm~\ref{alg:DL_ULA}, we denote as $\bar{\mu}_k$ the average of the distributions associated to the iterates of outer iteration $k$ just \emph{before} the projection step, i.e., just before the projection step, $x_k \sim \bar{\mu}_k$. We also denote as $\tilde{\mu}_k$ the same distribution, but \emph{after} the projection step, i.e. the iterate that will be used as a warm start for the next outer iteration.

In order to show the result, we will show by induction that $\forall k \geq 1$,
\begin{equation}
\|\tilde{\mu}_k - \mu^*\|_{\TV} \leq u_k e^{-k}
\label{eq:recurence_TV}
\end{equation}
where $\{u_k\}_{k \geq 1}$ is a real-valued sequence defined as $u_1 = \min(2\sqrt{e} W_2(\mu_0, \mu^*) + 1+2\sqrt{2}, 2e)$ and $u_k = 4\sqrt{e u_{k-1}} + 9 + 2\sqrt{2}$.

Let us fix $k \geq 2$. Thanks to the inequality~\eqref{DL_descent_lemma},
\begin{align}
\|\bar{\mu}_k - \mu^*\|_{\TV} &\leq \sqrt{2KL(\bar{\mu}_k ; \mu^*)} \qquad \text{(Pinsker's inequality)} \nonumber \\
&\leq \sqrt{\frac{W_2^2(\tilde{\mu}_{k-1}, \mu^*)}{\gamma_k n_k} + 2L d \gamma_k} \nonumber \\
&\leq \frac{W_2(\tilde{\mu}_{k-1}, \mu^*)}{\sqrt{\gamma_k n_k}} + \sqrt{2L d \gamma_k}
\label{eq:recursion-error-light}
\end{align}
 
In order to use a recursion argument, we need to bound $W_2(\tilde{\mu}_{k-1}, \mu^*)$ by $\|\tilde{\mu}_{k-1} - \mu^*\|_{\TV}$. Note that the projection step for $\tilde{\mu}_{k-1}$ with $\tau_{k-1} = (k-1)$ ensures that $\text{Pr}_{X \sim \tilde{\mu}_{k-1}}(\|X\|_2 \geq k-1) = 0$. Knowing that $\E_{X\sim \mu^*}\l[\|X\|_2^2\r] \leq 1$, we can apply Lemma~\ref{W-KL-bound} on $W_2(\tilde{\mu}_{k-1}, \mu^*)$ using $R = k$. Also, by replacing the values for $\gamma_k, n_k$, we get

\[
W_2^2(\tilde{\mu}_{k-1}, \mu^*) \leq 4 k^2 \|\tilde{\mu}_{k-1} - \mu_{k-1}\|_{\TV} + 16  e k^2 e^{-k}.
\]

Thus,
\begin{align*}
\|\bar{\mu}_k - \mu^*\|_{\TV} &\leq \frac{2k\|\tilde{\mu}_{k-1}- \mu^*\|_{\TV} + 4\sqrt{e}k e^{-\frac{k}{2}}}{ke^{\frac{k}{2}}} + \sqrt{2}e^{-k}
\end{align*}

Now, by using the recursion hypothesis, i.e. that $\|\tilde{\mu}_{k-1} - \mu^*\|_{\TV} \leq u_{k-1} e^{-k+1}$, we have:
\begin{equation}
\|\bar{\mu}_k - \mu^*\|_{\TV} \leq \l( 2\sqrt{e u_{k-1}} + 4\sqrt{e} + \sqrt{2}\r)e^{-k}
\label{eq:recursion_ULA_1}
\end{equation}

Then, by taking into account the projection step at the end of outer iteration $k$, we obtain
\begin{align}
\|\tilde{\mu}_k - \mu_k \|_{\TV} & \le \|\tilde{\mu}_k - \bar{\mu}_k \|_{\TV} + \|\bar{\mu}_k - \mu^* \|_{\TV}
\qquad \text{(triangle inequality)} \nonumber\\
& = \Pr_{X\sim \bar{\mu}_k}[ \|X\|_2 >  \tau_k] +  \|\bar{\mu}_k - \mu^* \|_{\TV},
\label{eq:projection}
\end{align}
where the last line above follows because the projection step ensures $\Pr_{X\sim \tilde{\mu}_k}[ \|X\|_2 >  \tau_k] = 0$.  In turn, to compute the probability in the last line above, we write that 
\begin{align}
\Pr_{X\sim \bar{\mu}_k}[ \|X\|_2 \ge \tau_k ] & \le \Pr_{X\sim \mu^*}[ \|X\|_2 \ge \tau_k ] + \l| \bar{\mu}_k([\tau_k,\infty]) - \mu^* ([\tau_k,\infty]) \r|
\qquad \text{(triangle inequality)}
\nonumber\\
& \leq e^{-k} + \| \bar{\mu}_k - \mu^* \|_{\TV},
\label{eq:projection_tail}
\end{align}

By combining ~\eqref{eq:recursion_ULA_1}, ~\eqref{eq:projection} and ~\eqref{eq:projection_tail}, we finally obtain
\begin{align*}
\|\tilde{\mu}_k - \mu^*\|_{\TV} &\leq 2\|\bar{\mu}_k - \mu^*\|_{\TV} + e^{-k} \\
&\leq \l( 4\sqrt{e u_{k-1}} + 9 + 2\sqrt{2}\r)e^{-k} \\
&= u_k e^{-k}
\end{align*}

Finally, using equations ~\eqref{eq:recursion-error-light}, ~\eqref{eq:projection} and ~\eqref{eq:projection_tail} applied at $k=1$, we can also apply Lemma~\ref{W-KL-bound} and we get:
\begin{equation}
\|\tilde{\mu}_1 - \mu_1\|_{\TV} \leq \left(2 W_2(\mu_0, \mu^*) + 2\sqrt{2} + 1\right) e^{-1}
\end{equation}
which proves the result for the initial case. We thus showed that equation~\eqref{eq:recurence_TV} holds for all $k \geq 1$.

It is easy to verify that the sequence $\{u_k\}_{k\ge 1}$ converges, and is upper bounded by $U = \max(u_1, u^*)$ where $u^* = \lim_{k \rightarrow \infty} u_k$. Moreover, since $\E_{X\sim \mu^*}\l[\|X\|_2^2\r], \E_{X\sim \mu_0}\l[\|X\|_2^2\r] \leq 1$ we have that $W_2(\mu_0, \mu^*) \leq 2$, and thus $U$ is dimension independent.

After each outer iteration $k$, we thus have $\|\tilde{\mu}_k - \mu^*\|_{\TV} \leq U e^{-k}$. Therefore, after $K^{\TV} = \log(\frac{U}{\epsilon})$ iterations, we have $\|\tilde{\mu}_k - \mu^*\|_{\TV} \leq \epsilon$. The total number of iterations required is
\begin{align*}
N^{\TV} &= \sum_{k=1}^{K^{\TV}} n_k \\
&\leq LdK^2 \sum_{k=1}^{K^{\TV}} e^{3k} \\
&= \frac{1}{1-e^{-3}} Ld\log^2\l(\frac{U}{\epsilon}\r) U^3 \epsilon^{-3}
\end{align*}

Similarly, we also have $\W_2^2(\tilde{\mu}_k, \mu^*) \leq 4k^2 \|\tilde{\mu}_k - \mu^*\|_{\TV} + 16ek^2 e^{-k} \leq (4U + 16e)k^2e^{-k}$. Thus, after $K^{W_2} = \log(\frac{4U+16e}{\epsilon^2})$ iterations, we have $\W_2^2(\tilde{\mu}_k, \mu^*) \leq \epsilon \log(\frac{4U+16e}{\epsilon^2})$. The total number of iterations required is $N^{\W_2} = \mathcal{O}(Ld\epsilon^{-6})$.

Finally, we have $\KL(\bar{\mu}_k; \mu^*) \leq \frac{W_2^2(\tilde{\mu}_{k-1}, \mu^*)}{2\gamma_k n_k} + Ld\gamma_k \leq 2 \|\tilde{\mu}_{k-1} - \mu^*\|_{\TV} e^{-k} + e^{-2k} \leq (U + 1)e^{-2k}$. Therefore, after $K^{\KL} = \frac{1}{2}\log(\frac{U+1}{\epsilon})$ iterations, we have $\KL(\bar{\mu}_k; \mu^*) \leq \epsilon $. The total number of iterations required is $N^{\KL} = \mathcal{O}(Ld\epsilon^{-\frac{3}{2}})$.

\end{proof}

In order to show the more general theorem~\ref{thm:convergence_DL_ULA}, we must get rid of the assumption that $\E_{X\sim \mu^*}\l[\|X\|_2^2\r] \leq 1$. To this end, we will suppose that we apply DL-ULA to a contracted version of $\mu^*$, for which theorem~\ref{thm:convergence_DL_ULA} applies. Then, we will dilate the obtained sample in order to recover samples from the desired measure $\mu^*$ and bound the error induced by this dilatation in order to obtain the final convergence result.

Let us first recall the notion of push-forward measure. 

\begin{defn}
Let $h:\R^d \rightarrow \R$ be a strongly convex function whose gradient is denoted as $\nabla h : \R^d \rightarrow \R^d$. We say that $\nu$ is the \emph{push-forward measure} of $\mu$ under $\nabla h$, and we write $\nu = \nabla h \# \mu$, if $\nu$ is the distribution obtained by sampling from $\mu$, and then applying the map $\nabla h$ to the samples.

More precisely, it means that for every Borel set $E$ on $\R^d$, we have $\nu(E) = \mu(\nabla h^{-1}(E))$. 
\end{defn}

\begin{lem} \label{lem:push-forward}
Let $\der \mu = e^{-f(x)} \der x$ and $\der \nu = e^{-g(x)} \der x$ be such that $\nu = \nabla h \# \mu$ for some strongly convex function $h$. Then, the triplet $(\mu, \nu, h)$ must satisfy the Monge-Amp\`ere equation:
\[
e^{-f} = e^{-g \circ \nabla h} \det \nabla^2 h.
\]
\end{lem}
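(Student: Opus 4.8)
The plan is to derive the Monge--Amp\`ere equation directly from the definition of the push-forward measure together with the change-of-variables formula. First I would recall that $\nu = \nabla h \# \mu$ means $\nu(E) = \mu((\nabla h)^{-1}(E))$ for every Borel set $E$, and that since $h$ is strongly convex, $\nabla h$ is a differentiable bijection (its inverse is $\nabla h^*$ where $h^*$ is the Legendre conjugate), so that $(\nabla h)^{-1}(E) = (\nabla h)^{-1}(E)$ is well-defined and $\nabla^2 h$ is positive definite almost everywhere, hence $\det \nabla^2 h > 0$.

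Next I would write the defining identity in integral form: for every Borel set $E$,
\[
\int_E e^{-g(y)}\, \der y = \nu(E) = \mu\bigl((\nabla h)^{-1}(E)\bigr) = \int_{(\nabla h)^{-1}(E)} e^{-f(x)}\, \der x.
\]
Then I would apply the change of variables $y = \nabla h(x)$ in the left-hand integral (or equivalently $x = (\nabla h)^{-1}(y)$), which gives $\der y = \det \nabla^2 h(x)\, \der x$, so that
\[
\int_{(\nabla h)^{-1}(E)} e^{-g(\nabla h(x))} \det \nabla^2 h(x)\, \der x = \int_{(\nabla h)^{-1}(E)} e^{-f(x)}\, \der x.
\]
Since this holds for every Borel set $E$, and hence the integrands agree over every Borel subset of $\R^d$ (as $(\nabla h)^{-1}$ ranges over all Borel sets as $E$ does), the two integrands must coincide almost everywhere, which yields
\[
e^{-f} = e^{-g\circ \nabla h}\, \det \nabla^2 h
\]
as claimed.

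The main technical obstacle is justifying the change-of-variables step rigorously: this requires $\nabla h$ to be sufficiently regular (e.g. $C^1$ with locally Lipschitz inverse, or at least a diffeomorphism off a null set) so that the area/coarea formula applies and $\nabla^2 h$ exists as the Jacobian of $\nabla h$. Under the stated hypothesis that $h$ is strongly convex, $\nabla h$ is monotone and invertible, and Alexandrov's theorem guarantees $\nabla^2 h$ exists almost everywhere; combined with the fact that strong convexity bounds the Jacobian away from zero, the change-of-variables formula for (possibly only a.e.-differentiable) monotone maps applies, and the Monge--Amp\`ere equation should be read in the a.e.\ sense. I would state this regularity caveat explicitly but not belabor the measure-theoretic details, since the identity is standard in optimal transport (it is exactly the Jacobian equation for the Brenier map).
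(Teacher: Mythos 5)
Your proof is correct, and it is the standard change-of-variables derivation of the Monge--Amp\`ere equation for the push-forward of a density under the gradient of a convex function. The paper itself does not actually supply a proof of this lemma---it is stated as a recalled fact about push-forward measures (indeed it is exactly the Jacobian equation for Brenier-type maps, as you note)---so there is no alternative argument in the paper to compare against. Your integral identity
\[
\int_E e^{-g(y)}\,\der y = \int_{(\nabla h)^{-1}(E)} e^{-f(x)}\,\der x,
\]
the substitution $y=\nabla h(x)$ with $\der y = \det\nabla^2 h(x)\,\der x$, and the conclusion that the integrands agree almost everywhere because $E$ (hence $(\nabla h)^{-1}(E)$, since $\nabla h$ is a bijection) ranges over all Borel sets are all correct, and the Alexandrov-theorem caveat about $\nabla^2 h$ only existing a.e.\ is an appropriate qualification. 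One minor point worth being explicit about: the lemma is applied in the paper only with $h(x)=\tfrac{1}{2M}\|x\|_2^2$, so that $\nabla h$ is a smooth linear bijection and the change of variables needs no measure-theoretic sophistication; your more general justification covers this case trivially.
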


Let $\der \mu^* = e^{-f(x)} \der x$ be an $L$-smooth log-concave target distribution such that $\E_{X\sim \mu^*}\l[\|X\|_2^2\r] \leq M^2$. Instead of directly sample from $\mu^*$, suppose that we sample from the shrunk distribution $\nu^* = \nabla h \# \mu^*$ with $h(x) = \frac{1}{2M} \|x\|_2^2$ for some $M \geq 0$, i.e., $\nabla h(x) = \frac{x}{M}$. In this particular case, we have that $\det \nabla^2 h(x)$ is independent of $x$. Therefore, we have according the Lemma~\ref{lem:push-forward} that $\der \nu^* \propto e^{-f(M x)} \der x$. 

This means that $\nu^*$ is the same distribution as $\mu^*$, after the samples have been divided by $M$. It is easy to see that this scaling procedure implies that $\E_{X \sim \nu^*} \l[\|X\|_2 \r] = \frac{1}{M} \E_{X \sim \mu^*} \l[\|X\|_2 \r] \leq 1$. 

%Thus, choosing $\alpha = d$ and using Lemma~\ref{lem:moment_bound} together with the assumption that $x^* = \argmin_{x \in \R^d} f(x) = 0$, we have that $\E_{X \sim \nu^*} \l[\|X\|_2 \r] \leq \frac{2(1 + \frac{1}{d})}{\eta^2} + \frac{M_\eta}{d^2} = \mathcal{O}(1)$. 

Thus, if we apply DL-ULA for sampling from $\nu^*$, then we can apply the convergence result provided by theorem~\ref{thm:convergence_DL_ULA_light_tail}. Note that this push-forward implies that $\nu^*$ is $M^2 L$-smooth, i.e., the Lipschitz constant has been multiplied by $M^2$. Indeed, if $g(x) = f(Mx)$ and $f$ is $L$-smooth, then,
\begin{align*}
\|\nabla g(y) - \nabla g(x)\|_2 &= M \|\nabla f(My) - \nabla f(Mx)\|_2 \\
&\leq M^2\|y - x\|_2.
\end{align*}

 Let $\tilde{\nu}$ be the approximated distribution obtained using DL-ULA on $\nu$ with $n_k = LM^2dk^2 e^{3k}$, $\gamma_k = \frac{1}{LM^2d} e^{-2k}$ and $\tau_k = k$. Then, according to Theorem~\ref{thm:convergence_DL_ULA_light_tail}, we have the following convergence results:
\begin{itemize}
\item After $N^{\KL} = \tilde{\mathcal{O}}(LM^2d \epsilon^{-\frac{3}{2}})$ total iterations, we obtain $\KL(\tilde{\nu} - \nu^*) \leq \epsilon$.
\item After $N^{\TV} = \tilde{\mathcal{O}}(LM^2d \epsilon^{-3})$ total iterations, we obtain $\|\tilde{\nu} - \nu^*\|_{\TV} \leq \epsilon$.
\item After $N^{\W_2} = \tilde{\mathcal{O}}(LM^2d \epsilon^{-6})$ total iterations, we obtain $\W_2(\tilde{\nu}, \nu^*) \leq \epsilon$.
\end{itemize}

By applying the inverse mapping $\nabla h^{-1} (x) = M x$, we obtain samples from $\tilde{\mu} = \nabla h^{-1} \# \tilde{\nu}$. Interestingly, it can be shown that applying the same push-forward on two measures does not change their $\TV$-distance not their $\KL$ divergence ~\cite{hsieh2018mirrored}:
\begin{align*}
&\|\tilde{\nu} - \nu^*\|_{\TV} = \|\nabla h^{-1} \#\tilde{\nu} - \nabla h^{-1} \#\nu^*\|_{\TV} = \|\tilde{\mu} - \mu^*\|_{\TV}, \\
&\KL(\tilde{\nu}; \nu^*) = \KL(\nabla h^{-1} \#\tilde{\nu}; \nabla h^{-1} \#\nu^*) = \KL(\tilde{\mu}; \mu^*).
\end{align*}

In terms of $\W_2$-distance, when applying the same mapping $\nabla h^{-1}$ to two measures, it can be shown that
\[
\W_2(\tilde{\mu}; \mu^*) \leq M \W_2(\nabla h \#\tilde{\mu}; \nabla h \#\mu^*) = M \W_2(\tilde{\nu}; \nu^*).
\]

Therefore, by sampling from $\nu^*$, and then multiplying the obtained samples by $M$, we obtain the following convergence results:
\begin{itemize}
\item After $N^{\KL} = \tilde{\mathcal{O}}(LM^2d \epsilon^{-\frac{3}{2}})$ total iterations, we obtain $\KL(\tilde{\mu} - \mu^*) \leq \epsilon$.
\item After $N^{\TV} = \tilde{\mathcal{O}}(LM^2d \epsilon^{-3})$ total iterations, we obtain $\|\tilde{\mu} - \mu^*\|_{\TV} \leq \epsilon$.
\item After $N^{\W_2} = \tilde{\mathcal{O}}(LM^2d \l(\frac{\epsilon}{M}\r)^{-6}) = \tilde{\mathcal{O}}(LM^8 d \epsilon^{-6})$ total iterations, we obtain $\W_2(\tilde{\mu}, \mu^*) \leq \epsilon$.
\end{itemize}

Finally, we make the following important observation. By modifying the parameters $\gamma_k, \tau_k$, it is possible to mimic the above procedure by directly applying DL-ULA to $\mu^*$. Suppose that we apply DL-ULA for sampling from $\der \nu^* = e^{g(y)} \der y$, where $g(y) = f(My)$, using parameters $\gamma_k, n_k, \tau_k$. Let $y_i$ be the iterates of some arbitrary outer iteration $k$, and let $x_i = M y_i$ be their scaled version. The ULA iterates are:
$$
\left\{
\begin{array}{ll}
        y_{i+1} = y_i + \gamma_i \nabla g(y_i) + \sqrt{2 \tilde{\gamma_i}} g_i \\
       x_{i+1} = M y_{i+1}
    \end{array}
\right.
$$
Since $\nabla g(y_i) = M \nabla f(M y_i)$, we can rewrite this scheme only in terms of $\{x_i\}$:
\[
x_{i+1} = x_i + M^2 \gamma_i \nabla f(x_i) + \sqrt{2 M^2 \gamma_i} g_i
\]

Moreover, applying the projection step to $y_i$ with parameter $\tau_k$ is the same as applying this projection to $x_i$ with parameter $M \tau_k$.

Therefore, applying DL-ULA to $\nu^*$ using parameters $n_k, \gamma_k, \tau_k$, and then multiplying the iterates by $M$ is the same as directly applying DL-ULA to $\mu^*$ using parameters $n_k, M^2 \gamma_k, M\tau_k$.

Overall, if we apply DL-ULA to a distribution $\mu^*$ such that $\E_{X\sim \mu^*}\l[\|X\|_2^2\r] \leq M^2$ using $n_k = LM^2d k^2 e^{3k}$, $\gamma_k = \frac{1}{Ld} e^{-k}$ and $\tau_k = Mk$, then we can guarantee convergence rates of $\tilde{\mathcal{O}}(LM^2d \epsilon^{-\frac{3}{2}})$, $\tilde{\mathcal{O}}(LM^2d \epsilon^{-3})$ and $\tilde{\mathcal{O}}(LM^8d \epsilon^{-6})$ in $\KL$ divergence, $\TV$-distance and $\W_2$-distance respectively.

Finally, thanks to Lemma~\ref{lem:moment_bound}, we know that we can choose $M = \sqrt{\frac{2d(d+1)}{\eta^2} + M_\eta^2} = \mathcal{O}(d)$. Thus, plugging this value inside the convergence results above concludes the theorem.

\section{Proof of Lemma~\ref{lem:W2-constraint-conv}}\label{sec:smooth-convex-set}

\begin{proof}
A similar result has been shown in \cite{brosse2017sampling} (Proposition 5) for $W_1$ distance, and it is only a matter of trivial technicalities to extend their result to $W_2$ distance. Since the full proof requires to introduce several concepts that are out of the scope of this paper, we only present the required modifications that allow us to extend the result from $\W_1$- to $\W_2$-distance.

Using \cite{villani2009optimal}, Theorem 6.15, we have:
\begin{equation}
W_2^2(\mu_\lam, \mu^*) \leq 2 \int_{\R^d} \|x\|_2^2 |\mu^*(x) - \mu_\lam(x)| dx = A + B
\end{equation}
where
\begin{equation}
A = \int_{K^c} \|x\|_2^2 \mu_\lam(x) dx \text{ ,     } B = \left(1 - \frac{\int_K e^{-f}}{\int_{\R^d} e^{-f_\lam}}\right) \int_K \|x\|_2^2 \mu^*(x) dx
\end{equation}

Following very closely the proof in \cite{brosse2017sampling} (equations 48 to 51), we can easily obtain:
\begin{equation}
A \leq \Delta_1^{-1} \sum_{i=0}^{d-1} \left(\frac{d}{r}\sqrt{\frac{\pi\lam}{2}}\right)^{d-i}\left(R^2 + 2R\sqrt{\lam(d-i+2)} +  \lam(d-i+2)\right).
\end{equation}
Therefore, for $\lam \leq \frac{r^2}{2\pi d^2}$,
\begin{equation}
A \leq \Delta_1^{-1}\sqrt{2\pi\lam}dr^{-1}\left(R^2 + 2Rr\sqrt{\frac{3}{2d\pi}} +  r^2\frac{3}{2d\pi}\right).
\end{equation}

Moreover, it is also shown in \cite{brosse2017sampling} (equations 17, 30, 42) that $\left(1 - \frac{\int_K e^{-f}}{\int_{\R^d} e^{-f_\lam}}\right) \leq \Delta_1^{-1}2\pi\lam dr^{-1}$, which implies:
\begin{equation}
B \leq \Delta_1^{-1}\sqrt{2\pi\lam}dr^{-1}R^2
\end{equation}

We thus showed that $W_2(\mu_\lam, \mu^*) \leq C \sqrt{d} \lam^{\frac{1}{4}}$ for some $C > 0$ depending on $D,r,\Delta_1$.

\end{proof}

\section{Convergence rate of HULA for sampling from a distribution over a bounded domain} \label{sec:convergence-constrained}

The proof of Theorem~\ref{thm:convergence_DL_MYULA} is very similar to the one for DL-ULA. Before presenting it, we will need an auxiliary Lemma, showing the light tail property of the distributions $\mu_\lam$. 

\begin{lem} \label{lem:lighttail_constrained}
For $\lam \leq \frac{r^2}{8 d^2}$, the distribution $\mu_\lam$ as defined in equation~\eqref{eq:penalized_dist} satisfies 
\[
\text{Pr}_{X \sim \mu_\lam} (\|X\|_2 \geq R) \leq \sigma e^{-\frac{R}{D}}
\]
for some scalar $\sigma > 0$ and any $R > 0$, where $D$ is the diameter of the constraint set $\Omega$.
\end{lem}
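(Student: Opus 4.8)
The plan is to estimate the tail probability $\Pr_{X\sim\mu_\lam}(\|X\|_2\ge R)$ directly as the ratio $\int_{\|x\|_2\ge R}e^{-f_\lam}\big/\int_{\R^d}e^{-f_\lam}$ and to bound numerator and denominator separately using the geometry supplied by Assumption~\ref{assumption:constraint}. For the denominator I would discard all the mass outside $\Omega$ and then restrict to the inscribed ball: since the Moreau--Yosida penalty in \eqref{eq:penalized_dist} vanishes on $\Omega$ and $B(0,r)\subseteq\Omega$, this gives $\int_{\R^d}e^{-f_\lam}\ge\int_{B(0,r)}e^{-f}\ge \text{Vol}(B(0,r))\,e^{-\max_\Omega f}$. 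For the numerator, observe that for $R\ge 2D$ the region $\{\|x\|_2\ge R\}$ lies in $\Omega^c$ (as $\Omega\subseteq B(0,D)$), where part~2 of Assumption~\ref{assumption:constraint} yields $f(x)\ge \max_\Omega f+\log\Delta_1$, hence $e^{-f(x)}\le \Delta_1^{-1}e^{-\max_\Omega f}$. The factors $e^{-\max_\Omega f}$ cancel in the ratio, so everything reduces to controlling $\int_{\|x\|_2\ge R}e^{-\frac{1}{2\lam}\|x-\text{proj}_\Omega(x)\|_2^2}\der x$ uniformly in $\lam$.

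For that integral I would use $\|x-\text{proj}_\Omega(x)\|_2=\text{dist}(x,\Omega)\ge\|x\|_2-D$ and pass to polar coordinates, reducing to $\omega_{d-1}\int_R^\infty\rho^{d-1}e^{-(\rho-D)^2/(2\lam)}\der\rho$ with $\omega_{d-1}$ the surface area of the unit sphere. The step-size budget $\lam\le r^2/(8d^2)$ together with $r\le D$ makes the quadratic exponent large: for $\rho\ge 2D$ one has $(\rho-D)^2/(2\lam)\ge \rho^2/(8\lam)\ge \rho^2/D^2$, and splitting $\rho^2/D^2=\tfrac12\rho^2/D^2+\tfrac12\rho^2/D^2$ extracts a clean factor $e^{-\rho/D}$ from the first half (since $\rho\ge 2D$) while the second half dominates the polynomial Jacobian, giving $\rho^{d-1}e^{-\rho^2/D^2}\le\kappa_d\,e^{-\rho/D}$ with $\kappa_d=\sup_{\rho\ge 0}\rho^{d-1}e^{-\rho^2/(2D^2)}<\infty$. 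Integrating yields a bound of order $\kappa_d D\,e^{-R/D}$ valid for all $R\ge 2D$. Collecting constants gives $\Pr_{X\sim\mu_\lam}(\|X\|_2\ge R)\le \sigma\,e^{-R/D}$ with $\sigma=\Delta_1^{-1}\omega_{d-1}\kappa_d D/\text{Vol}(B(0,r))$; for $R<2D$ the trivial bound $1\le e^2e^{-R/D}$ covers the remaining range, so one takes $\sigma$ to be the larger of the two constants, which depends only on $d,r,D,\Delta_1$ and not on $\lam$ or $R$.

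The main obstacle is getting the exponential rate to be exactly $1/D$ with a prefactor $\sigma$ genuinely free of $\lam$: one cannot just invoke the generic light-tail results (Lemmas~\ref{lem:moment_bound}--\ref{lem:light_tail}) applied to $\mu_\lam$, because the second-moment bound they produce, and hence the resulting rate and its $\sqrt d$ factors inside the exponent, degrades as $\lam\to 0$. The point is that the quadratic penalty is far stronger than the merely linear growth of Assumption~\ref{assumption:lighttail}, and the constraint $\lam\le r^2/(8d^2)$ is exactly what converts that quadratic decay into a $\lam$-free sub-exponential tail tied to the fixed scale $D$; the only slightly delicate bookkeeping is absorbing the $\rho^{d-1}$ Jacobian into a finite constant, which the elementary supremum bound above handles. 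This estimate is closely related to the bound on the term ``$A$'' in the proof of Lemma~\ref{lem:W2-constraint-conv}, so it can largely be reused rather than redone from scratch.
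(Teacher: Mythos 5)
Your strategy is the right one and, up to presentation, it is the same as the paper's: write the tail probability as the ratio $\int_{\|x\|\ge R} e^{-f_\lam}\big/\int e^{-f_\lam}$, lower-bound the normalizer by restricting to the inscribed ball $B(0,r)\subset\Omega$ so the Moreau--Yosida penalty vanishes, upper-bound $e^{-f}$ outside $\Omega$ via Assumption~\ref{assumption:constraint}\,(2), replace $\|x-\mathrm{proj}_\Omega(x)\|_2$ by $\|x\|_2-D$, pass to polar coordinates, and then absorb the surface Jacobian $\rho^{d-1}$ into the Gaussian factor. Your observation that one cannot simply recycle Lemmas~\ref{lem:moment_bound}--\ref{lem:light_tail} applied to $\mu_\lam$ because the resulting rate degrades as $\lam\to 0$ is exactly the point, and it correctly motivates the direct calculation.

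Where you lose something real is the step ``$(\rho-D)^2/(2\lam)\ge\rho^2/(8\lam)\ge\rho^2/D^2$.'' The hypothesis $\lam\le r^2/(8d^2)$ actually yields $\rho^2/(8\lam)\ge \rho^2 d^2/D^2$, and you have thrown the $d^2$ away. That $d^2$ is precisely what the assumption was designed to supply: it is the ingredient that tames $\rho^{d-1}$ without paying in dimension. With it dropped, your prefactor becomes $\kappa_d=\sup_{\rho\ge 0}\rho^{d-1}e^{-\rho^2/(2D^2)}=\l(D\sqrt{(d-1)/e}\r)^{d-1}$, and combined with $\omega_{d-1}/\mathrm{Vol}(B(0,r))\asymp d/r^d$ you get $\sigma\asymp \Delta_1^{-1}dD\l(D\sqrt{d}/r\r)^{d}$, which blows up super-exponentially in $d$. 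As stated the lemma is technically satisfied, since it only asks for ``some scalar $\sigma>0$,'' but the paper explicitly engineers $\sigma$ to be bounded uniformly in $d$: it uses the incomplete Gamma bound $\Gamma(s;x)\le s x^s e^{-x}$ valid for $x\ge s$ (the constraint on $\lam$ guarantees $\tfrac{1}{2\lam}(R-D)^2\ge d/2$), then raises the bound to the $d^2$ power to obtain $\sigma=\max(1,c_d)e^2$ with $c_d=\Delta_1^{1/d^2}\,2^{-4/d^2}\,d^{2/d^2}\to 1$. This dimension-free $\sigma$ feeds into the sequence $u_k$ in the proof of Theorem~\ref{thm:convergence_DL_MYULA} and is needed for the advertised $\wt{O}(d^{3.5})$ complexity; with your $\sigma$ that theorem's dimension dependence would collapse. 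The fix stays within your framework: keep $(\rho-D)^2/(2\lam)\ge\rho^2 d^2/D^2$, split that as two copies of $\rho^2 d^2/(2D^2)$, and observe that the supremum of $\rho^{d-1}e^{-\rho^2 d^2/(2D^2)}$ is attained at $\rho\approx D/\sqrt{d}$ with a value that no longer grows in $d$ once combined with the $r^{-d}$ from the volume ratio.
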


\begin{proof}

Suppose first that $R \geq 2D$. Then,

\begin{align*}
\Pr_{X\sim \mu_\lam}\l[ \|X\|_2 \geq R\r] &= \frac{\int_{\text{B}(0,R)^c} e^{-f(x) - \frac{1}{2\lam} \|x - \text{proj}_\Omega(x)\|_2^2} \der x}{\int_{\Omega} e^{-f(x)} \der x + \int_{\Omega^c} e^{-f(x) - \frac{1}{2\lam} \|x - \text{proj}_\Omega(x)\|_2^2} \der x} \\ 
&\leq \Delta_1 \frac{\int_{\text{B}(0,R)^c} e^{-\frac{1}{2\lam} (\|x\|_2 - D)^2} \der x}{\text{Vol}(\Omega)} \\
&\leq \Delta_1 \text{Vol}(\Omega)^{-1} \int_R^\infty u^{d-1} e^{-\frac{1}{2\lam} (u - D)^2} \der u \\
&= \Delta_1 \text{Vol}(\Omega)^{-1} d \text{Vol}(B(0,1))  \int_R^\infty u^{d-1} e^{-\frac{1}{2\lam} (u - D)^2} \der u \\
&\leq \Delta_1 d \frac{\text{Vol}(B(0,1))}{\text{Vol}(B(0,r))} D^{d-1}  \int_{R-D}^\infty (u + D)^{d-1} e^{-\frac{1}{2\lam} u^2} \der u \\
&\leq \Delta_1 d \frac{1}{r^d}  \int_{R-D}^\infty (2u)^{d-1} e^{-\frac{1}{2\lam} u^2} \der u \qquad \text{ since $u \geq R - D \geq D$} \\
&\leq \Delta_1 d \frac{1}{r^d}2^{d-1}  \int_{\frac{1}{2\lam}(R-D)^2}^\infty \l(2v\lam\r)^{\frac{d-1}{2}} e^{-v} \sqrt{\frac{\lam}{2v}}\der u  \qquad (v = \frac{1}{2\lam} u^2) \\
&\leq \Delta_1 d \frac{2^{\frac{3}{2}d-3} \lam^{\frac{d}{2}}}{r^d} \Gamma\l(\frac{d}{2}; \frac{1}{2\lam}(R-D)^2\r) \qquad \text{ where $\Gamma(s;x)$ is the incomplete Gamma function} \\
&\leq \Delta_1 d \frac{2^{-3} }{d^d} \frac{d}{2} \l(\frac{1}{2\lam}(R-D)^2\r)^{\frac{d}{2}} e^{-\frac{1}{2\lam}(R-D)^2} \qquad \text{ since for $x \geq s$, $\Gamma(s;x) \leq sx^s e^{-x}$, $\lam \leq \frac{r^2}{8 d^2}$} \\
&\leq \l( \Delta_1^{\frac{1}{d^2}} 2^{\frac{-4}{d^2}} d^{\frac{2}{d^2}} \l(\frac{(R-D)^2}{2\lam d^2}\r)^{\frac{1}{2d}} e^{-\frac{1}{2\lam d^2}(R-D)^2} \r)^{d^2} \\
&\leq \l( c_d e^{-\frac{1}{\sqrt{2\lam}d}(R-D)} \r)^{d^2} \qquad \text{ since $x e^{-x^2} \leq e^{-x}$ $\forall x \geq 0$ and $\frac{1}{2\lam d^2}(R-D)^2 \geq 1$}
\end{align*}

where in the last line, $c_d = \Delta_1^{\frac{1}{d^2}} 2^{\frac{-4}{d^2}} d^{\frac{2}{d^2}}$. If $c_d e^{-\frac{\sqrt{\frac{1}{2\lam}}}{d}(R-D)} \geq 1$, then, this does not provide a useful bound, and we can always write $\Pr_{X\sim \mu_\lam}\l[ \|X\|_2 \geq R\r] \leq 1 \leq c_d e^{-\frac{\sqrt{\frac{1}{2\lam}}}{d}(R-D)}$. On the other hand, if $c_d e^{-\frac{\sqrt{\frac{1}{2\lam}}}{d}(R-D)} \leq 1$, then we have $\Pr_{X\sim \mu_\lam}\l[ \|X\|_2 \geq R\r] \leq \l( c_d e^{-\frac{\sqrt{\frac{1}{2\lam}}}{d}(R-D)} \r)^{d^2} \leq c_d e^{-\frac{\sqrt{\frac{1}{2\lam}}}{d}(R-D)}$.

Therefore, we can write:
\begin{align*}
\Pr_{X\sim \mu_\lam}\l[ \|X\|_2 \geq R\r]  &\leq c_d e^{-\frac{\sqrt{\frac{1}{2\lam}}}{d}(R-D)} \\
&\leq c_d e^{-2(\frac{R}{D}-1)} \qquad \text{ since $\lam \leq \frac{r^2}{8 d^2} \leq \frac{D^2}{8 d^2}$} \\
&\leq \max(1,c_d) e^2 e^{-\frac{R}{D}}.
\end{align*}

Moreover, in the case $R \leq 2D$, we have $\max(1,c_d) e^2 e^{-\frac{R}{D}} \geq 1 \geq \Pr_{X\sim \mu_\lam}\l[ \|X\|_2 \geq R\r]$. We thus showed the result with $\s = \max(1,c_d) e^2$. Note that although $c_d$ depends on $d$, it is bounded and converges to $1$ as $d \rightarrow \infty$, thus it does not involve any asymptotic dependence in $d$.

\end{proof}

Using this Lemma, we can now prove our convergence result for DL-MYULA (Theorem~\ref{thm:convergence_DL_MYULA}).

\begin{proof}

Let denote $\mu_k \equiv \mu_{\lam_k}$ the target distributions of the ULA iterations at outer iteration $k \geq 1$, and $\mu_{init}$ the initial distribution. It is straightforward to show that the distributions $\mu_k$ are $L_k$-smooth with $L_k = L + \frac{1}{\lam_k}$. 

The proof goes exactly the same way as for Theorem~\ref{thm:convergence_DL_ULA}. We will show by induction that $\forall k \geq 1$,
\[
\| \tilde{\mu}_k - \mu_k\|_{\TV} \leq u_k e^{-k} + \sqrt{2 + \frac{16 d^2}{Lr^2}} e^{-2k}
\]
where $\{u_k\}_{k \geq 1}$ is defined $u_1 = \sqrt{e} \left(\W_2(\mu_{init}, \mu^* + C_\Omega d^{\frac{1}{4}}) \right)$ and the recurrence relation
\[
u_k = 4D\sqrt{e u_{k-1}} + 4D\sqrt{\sigma} + \frac{2C_\Omega d^{\frac{1}{4}}(\sqrt{e}+1)}{k^2} + \frac{2\sqrt{2} d^{\frac{1}{2}}}{L} + \sigma.
\]

For any $k \geq 1$, we have:
\begin{align}
\|\bar{\mu}_k - \mu_k\|_{\TV} &\leq \sqrt{2\KL(\bar{\mu}_k ; \mu_k)} \qquad \text{(Pinsker's inequality)} \nonumber \\
&\leq \sqrt{\frac{W_2^2(\tilde{\mu}_{k-1}, \mu_k)}{\gamma_k n_k} + 2 L_k d \gamma_k} \nonumber \\
&\leq \frac{W_2(\tilde{\mu}_{k-1}, \mu_k)}{\sqrt{\gamma_k n_k}} + \sqrt{2L_k d \gamma_k} \nonumber \\
&\leq \frac{W_2(\tilde{\mu}_{k-1}, \mu_{k-1})}{\sqrt{\gamma_k n_k}} + \frac{W_2(\mu_{k-1}, \mu^*)}{\sqrt{\gamma_k n_k}} + \frac{W_2(\mu_k, \mu^*)}{\sqrt{\gamma_k n_k}} + \sqrt{2L_k d \gamma_k}
\label{eq:recursion-error-constr}
\end{align}

For the second and third term, we can use Lemma~\ref{lem:W2-constraint-conv} and the values of $\lam_k$ to show that $\forall k \geq 1$,
\begin{equation}
W_2(\mu_k, \mu^*) \leq C_\Omega d^{\frac{1}{4}} e^{-\frac{k}{2}}
\label{eq:smoothing-constr}
\end{equation}

For the first term, we use Lemma~\ref{W-KL-bound-1} with $R = Dk$ together with the fact that $\text{Pr}_{X \sim \tilde{\mu}_{k-1}}(\|X\|_2 \geq Dk) = 0$ thanks to the projection step, and the light tail property of $\mu_k$ to obtain
\begin{equation}
W_2^2(\tilde{\mu}_{k-1}, \mu_{k-1}) \leq 4D^2 k^2 \|\tilde{\mu}_{k-1} - \mu_{k-1}\|_{\TV} + 4 D^2 k^2  \sigma e^{-k + 1}.
\label{eq:W-TV-constr}
\end{equation}

By replacing ~\eqref{eq:smoothing-constr} and ~\eqref{eq:W-TV-constr} in ~\eqref{eq:recursion-error-constr}, and using the recursion hypothesis for $\|\tilde{\mu}_{k-1} - \mu_{k-1}\|_{\TV}$, we obtain
\begin{equation}
\|\bar{\mu}_k - \mu_k\|_{\TV} \leq \left(2D \sqrt{eu_{k-1}} + 2D\sqrt{\s} + \frac{C_\Omega d^{\frac{1}{4}}(\sqrt{e} + 1)}{k^2} + \frac{\sqrt{2}d^{\frac{1}{2}}}{L} \right) e^{-k} + \sqrt{2+ \frac{16d^2}{Lr^2}}e^{-2k}
\end{equation}

Similarly as for DL-ULA, and using Lemma~\ref{lem:lighttail_constrained} we can show that
\[
\|\tilde{\mu}_k - \mu_k\|_{\TV} \leq 2 \|\bar{\mu}_k - \mu_k \|_{\TV} + \sigma e^{-k}
\]

Thus, using the recurrence relation for $u_k$, we have
\begin{equation}
\|\tilde{\mu}_k - \mu_k \|_{\TV} \leq u_k e^{-k} +  \sqrt{2+ \frac{16d^2}{Lr^2}}e^{-2k}
\end{equation}

as required to show the induction property. The case for $k=1$ is shown analogous to DL-ULA.

Finally, in order to relate $\tilde{\mu}_k$ to the target distribution $\mu^*$, we use the result shown in ~\cite{bubeck2018sampling} that $\|\mu_{\lam} - \mu^*\|_{\TV} \leq C' d \sqrt{\lam}$ for some constant $C' > 0$ and $\forall \lam < \frac{r^2}{8d^2}$.

We can easily show that the sequence $\{u_k\}_{k \geq 1}$ increasingly converges to the following limit:
\begin{align*}
U &= 8eD^2 + 4D\sqrt{\sigma} + \frac{2C_\Omega d^{\frac{1}{4}}(\sqrt{e}+1)}{k^2} + \frac{2\sqrt{2} d^{\frac{1}{2}}}{L} + \sigma + 4D\sqrt{4eD^2 + 2D\sqrt{\sigma} + \frac{C_\Omega d^{\frac{1}{4}}(\sqrt{e}+1)}{k^2} + \frac{\sqrt{2} d^{\frac{1}{2}}}{L} + \frac{\sigma}{2}} \\
&= \mathcal{O}(\sqrt{d}).
\end{align*}

We thus have for all $k \geq 1$:
\[
\|\tilde{\mu}_k - \mu^* \|_{\TV} \leq (U+C' \sqrt{d}) e^{-k} + \sqrt{2+ \frac{16d^2}{Lr^2}}e^{-2k}
\]

Therefore, after $K^{\TV} = \log \l(\frac{2\max\l(U+C' \sqrt{d}, \l(2+ \frac{16d^2}{Lr^2}\r)^{\frac{1}{4}}\r)}{\epsilon} \r)$ iterations, we have $\|\tilde{\mu}_k - \mu^*\|_{\TV} \leq \epsilon$. The total number of iterations required is $N^{\TV} = \tilde{\mathcal{O}}(Ld^{3.5}\epsilon^{-5})$.

Finally, using $\W_2^2(\tilde{\mu}_k, \mu^*) \leq 4D^2k^2 \|\tilde{\mu}_k - \mu^*\|_{\TV}$, we can obtain a similar convergence result, i.e., after $K^{\W_2} = \log \l(\frac{8D^2\max\l(U+C' \sqrt{d}, \l(2+ \frac{16d^2}{Lr^2}\r)^{\frac{1}{4}}\r)}{\epsilon} \r)$ iterations, we have $\W_2(\tilde{\mu}_k, \mu^*) \leq \epsilon \log^2(K)$. The total number of iterations required is $N^{\W_2} = \tilde{\mathcal{O}}(Ld^{3.5}\epsilon^{-10})$.

\end{proof}

%f, in addition, the smoothing family satisfies $\|\mu_k - \mu^*\|_{\TV} \leq C' \frac{d^{2\alpha}}{\rho_k^{2s}}$ for some $C' > 0$,\anote{you'd have to explain why this is a reasonable thing to assume. it seems out of the blue.} then after $K \simeq \log \left(d^{\frac{2\alpha}{1+4s}}\epsilon^{-1}\right)$ outer iterations, we obtain $\|\hat{\mu}_K - \mu^*\|_{\TV} \leq \epsilon$. This corresponds to a total number of iterations of $N_K = \mathcal{O}\left(d^{1+\frac{2\alpha}{1+4s}\left(\frac{1}{2s} + 3\right)} \epsilon^{-\frac{1}{2s} - 3} \right)$.

%Let $\{\ol{\mu}_k\}_{k\ge 1}$ denote the average probability measures produced by Algorithm~\ref{LC-LD}, namely, $\ol{x}_k$ follows $\ol{\mu}_k$. 
% 
%Let $\beta_k = k^\beta$, $\gamma_k = ck^{-2\beta}\log^2 k$ for some constant $c>0$ and $n_k = k^{2\beta}$. Then for every $k > 0$, it holds that:
%\begin{equation} 
%\text{\KL}(\mu_k, \hat{\mu}_k) = \tilde{O}(N^{-\frac{\beta}{2\beta + 1}}) %\left(\frac{1}{2} \right)^k\text{\KL}(\mu_0, \hat{\mu}_0) + 
%\end{equation}
%where $N = \sum_{i=1}^k n_k$ is the total number of iterations.

\if 0

\section{Proof of Theorem~\ref{thm:convergence} \label{sec:cvg-proof}}

Throughout the rest of the appendices, to simplify our notation, we set 
\begin{align}
\mu_k \equiv \mu_{\o_k}, 
\qquad f_k \equiv f_{\o_k},
\qquad L_k \equiv L_{\o_k}
\qquad \forall k\ge 1.
\end{align}

\begin{lem}
\label{lem:adaptive-convergence}
Let $\rho_k, n_k, \gamma_k, \tau_k$ be the same as defined in Theorem~\ref{thm:convergence}. Let $\{u_k\}_{k \geq 1}$ be a real-valued sequence defined as $u_1 = \sqrt{2} W_2(\mu_{init}, \mu_1) + 2d^{\frac{2\alpha}{1+4s}} + \s$ and $u_k = 2\b\sqrt{2eu_{k-1}} + 4\b\sqrt{2\s} + \frac{2Cd^{\frac{\alpha}{1+4s}}(e + 1)}{k^2} + 2d^{\frac{2\alpha}{1+4s}} + \s$ $\forall k \geq 2$.
\begin{equation}
\|\bar{\mu}_k - \mu_k\|_{\TV} \leq u_k e^{-k}
\label{eq:lemma}
\end{equation}
\end{lem}

\emph{Proof of Lemma~\ref{lem:adaptive-convergence}.}

Recall the input sequence $\{\mu_k\}_{k\ge 1}$  in Algorithm~\ref{LC-LD}. 
Recall  that $\ol{\mu}_k$ denotes the output probability measure in the $k^{\text{th}}$ outer iteration of Algorithm~\ref{LC-LD}, namely, $\ol{x}_k$ follows $\ol{\mu}_k$ at the end of the $k^{\text{th}}$ outer iteration. Recall also the projection in the last step of the algorithm. We let $\wt{\mu}_k$ denote the output probability measure \emph{before} undergoing projection. That is, before the projection step, $\ol{x}_k$ follows $\wt{\mu}_k$. 

We prove Lemma~\ref{lem:adaptive-convergence} by induction.  For any $k \geq 2$, we have:
\begin{align}
\|\tilde{\mu}_k - \mu_k\|_{\TV} &\leq \sqrt{\KL(\tilde{\mu}_k ; \mu_k)} \qquad \text{(Pinsker's inequality)} \nonumber \\
&\leq \sqrt{\frac{W_2^2(\bar{\mu}_{k-1}, \mu_k)}{2\gamma_k n_k} + L_k d \gamma_k} \nonumber \\
&\leq \frac{W_2(\bar{\mu}_{k-1}, \mu_k)}{\sqrt{2\gamma_k n_k}} + \sqrt{L_k d \gamma_k} \\
\label{eq:recursion-error}
&\leq \frac{W_2(\bar{\mu}_{k-1}, \mu_{k-1})}{\sqrt{2\gamma_k n_k}} + \frac{W_2(\mu_{k-1}, \mu^*)}{\sqrt{2\gamma_k n_k}} + \frac{W_2(\mu_k, \mu^*)}{\sqrt{2\gamma_k n_k}} + \sqrt{L_k d \gamma_k}
\end{align}

Note that the projection step for $\bar{\mu}_{k-1}$ with $\tau_{k-1} = 2 \b (k-1)$ ensures its light tail property since $Pr_{X\sim \bar{\mu}_{k-1}}(\|X\|_2 \geq 2 \b (k-1)) = 0$. We can thus apply Lemma~\ref{W-KL-bound} on $W_2^2(\bar{\mu}_{k-1}, \mu_{k-1})$ using $R = 2\b k$.

This yields the bound: $$W_2^2(\bar{\mu}_{k-1}, \mu_{k-1}) \leq 16 \b^2 k^2 \|\bar{\mu}_{k-1} - \mu_{k-1}\|_{\TV} + 64 \s \b^2 k^2 e^{-k}.$$ % where $M = \max(\sqrt{2}\beta, \sigma)$.

We assume that ~\eqref{eq:lemma} holds for $k-1$, and we show that it still holds at $k$:

\begin{align}
\|\tilde{\mu}_k - \mu_k\|_{\TV} &\leq \frac{4 \b k \sqrt{\|\bar{\mu}_{k-1} - \mu_{k-1}\|_{\TV}} + 8 \b \sqrt{\s}ke^{-\frac{k}{2}}}{\sqrt{2}k e^{\frac{k}{2}}} + \frac{Ce^{-\frac{k-1}{2}} + Ce^{-\frac{k}{2}}}{\sqrt{2}k e^{\frac{k}{2}}}d^{\alpha-\frac{4\alpha s}{1+4s}} + d^{\frac{2\alpha}{1+4s}}e^{-k} \nonumber \\
&\leq \left(2\b \sqrt{2eu_{k-1}} + 4\b\sqrt{2\s} + \frac{Cd^{\frac{\alpha}{1+4s}}(e + 1)}{k^2} + d^{\frac{2\alpha}{1+4s}} \right) e^{-k}
\label{eq:recursion-error2}
%&= u_k e^{-k}
\end{align}
where $C$ is the constant given in Definition~\ref{defn:smoothable}. We next take into account the projection step in the last step of Algorithm~\ref{LC-LD} by writing that 
\begin{align}
\|\ol{\mu}_k - \mu_k \|_{\TV} & \le \|\ol{\mu}_k - \wt{\mu}_k \|_{\TV} + \|\wt{\mu}_k - \mu_k \|_{\TV}
\qquad \text{(triangle inequality)} \nonumber\\
& = \Pr_{X\sim \wt{\mu}_k}[ \|X\|_2 >  \tau_k] +  \|\wt{\mu}_k - \mu_k \|_{\TV},
\label{eq:tau-not-remove}
\end{align}
where the last line above follows because the last step of Algorithm~\ref{LC-LD} clips the output to ensure $\|\ol{x}_k \|_2 \le \tau_k$. 
In turn, to compute the probability in the last line above, we write that 
\begin{align}
\Pr_{X\sim \wt{\mu}_k}[ \|X\|_2 \ge \tau_k ] & \le \Pr_{X\sim {\mu}_k}[ \|X\|_2 \ge \tau_k ] + 
\l| \wt{\mu}_k([\tau_k,\infty]) - \mu_k ([\tau_k,\infty]) \r|
\qquad \text{(triangle inequality)}
\nonumber\\
& \leq \s e^{-\frac{\tau_k}{2\b}} + \| \wt{\mu}_k - \mu_k \|_{\TV},
\qquad \text{(see (\ref{eq:defn-TV},\ref{eq:subexp-lim}))}
\label{eq:tau-not-remove2}
\end{align}
where the second line above uses the earlier observation that $\mu_k$ satisfies the light tail Assumption~\ref{assumption:lighttail}.

With our choice of $\tau_k$, and by combining ~\eqref{eq:recursion-error2}, ~\eqref{eq:tau-not-remove}, ~\eqref{eq:tau-not-remove2}, we get:
\begin{align}
\|\ol{\mu}_k - \mu_k \|_{\TV}  &\leq \left(2\b\sqrt{2eu_{k-1}} + 4\b\sqrt{2\s} + \frac{2Cd^{\frac{\alpha}{1+4s}}(e + 1)}{k^2} + 2d^{\frac{2\alpha}{1+4s}} + \s \right) e^{-k} \nonumber \\
&= u_k e^{-k}
\label{eq:induction-result}
\end{align}

Finally, from equations ~\eqref{eq:recursion-error}, ~\eqref{eq:tau-not-remove}, ~\eqref{eq:tau-not-remove2} applied at $k=1$, we get:
\begin{equation}
\|\bar{\mu}_1 - \mu_1\|_{\TV} \leq \left(\sqrt{2} W_2(\mu_{init}, \mu_1) + 2d^{\frac{2\alpha}{1+4s}} + \s \right) e^{-1}
\end{equation}
which proves the result for the initial case. \hfill \qed

We now relate Lemma~\ref{lem:adaptive-convergence} to Theorem~\ref{thm:convergence}. It is easy to verify that the sequence $\{u_k\}_{k\ge 1}$ is upper bounded by $U = \max(u_1, u^*)$ where $u^* = \lim_{k \rightarrow \infty} u_k$ satisfies

\begin{align*}
u^* \leq 4e\b^2 + 4\b\sqrt{2\s} + 2C(e+1)d^{\frac{\alpha}{1+4s}} + 2d^{\frac{2\alpha}{1+4s}} + \s + 2\b e^{\frac{1}{2}}\sqrt{4e\b^2 + 8\b\sqrt{2\s} + 4C(e+1)d^{\frac{\alpha}{1+4s}} + 4d^{\frac{2\alpha}{1+4s}} + 2}.
\end{align*}
%, that $\lim_{k\rightarrow \infty} u_k = 4M^2 + 2M + 3 + 4M\sqrt{M^2 + M + \frac{3}{2}} \equiv u^*$ and that $u_k \leq max(u^*, u_1) \equiv U$.

Using the smoothing property of $\{\mu_k\}_{k \geq 1}$, .i.e., $W_2(\mu_k, \mu^*) \leq C d^\alpha / \rho_k^s$, and the triangular inequality, we get:
\begin{equation}
W_2^2(\bar{\mu}_k, \mu^*) \leq 2W_2^2(\bar{\mu}_k, \mu_k) + 2C^2d^{\frac{2\alpha}{1+4s}}e^{-k}
\end{equation}
Then, by applying again Lemma~\ref{W-KL-bound} on $W_2^2(\bar{\mu}_{k}, \mu_{k})$ using $R^2 = \max(4\beta^2 k^2, 2\sigma^2 k)$, we have
\begin{equation}
W_2^2(\bar{\mu}_k, \mu^*) \leq 16\b^2 k^2 \|\bar{\mu}_k - \mu_k\|_{\TV} + 64\s \b^2 k^2e^{-k} + 2C^2d^{\frac{2\alpha}{1+4s}}e^{-k}
\end{equation}

Note that thanks to the light tail Assumption~\ref{assumption:lighttail}, we necessarily have $W_2(\mu_{init}, \mu_1) \leq 4\sqrt{2} \b \sqrt{\s}$ (by direct application of Lemma~\ref{W-KL-bound} with $R=0$). Since we only want to track the dependency in the dimension $d$, it follows that $U = \mathcal{O}(d^{\frac{2\alpha}{1+4s}})$, and we can write:
\begin{equation}
W_2^2(\bar{\mu}_k, \mu^*) \leq c_0 d^{\frac{2\alpha}{1+4s}} k^2 e^{-k}
\label{eq:W2-error-k}
\end{equation}
for some constant $c_0$ depending on $\b, \s$ and $C$.

Therefore, after $K = \log \left(\frac{c_0d^{\frac{2\alpha}{1+4s}}}{\epsilon^2}\right)$ outer iterations, we have $W_2(\bar{\mu}_k, \mu^*) \leq \epsilon \log\left(\frac{c_0d^{\frac{2\alpha}{1+4s}}}{\epsilon^2}\right)$. The total number iterations required is
\begin{align*}
N_K &= \sum_{k=1}^K n_k \\
&\leq dK^2 \sum_{k=1}^K e^{k\left(\frac{1}{2s} + 3\right)} \\
&= dK^2 \frac{1}{1-e^{-\frac{1}{2s} - 3}} e^{K\left(\frac{1}{2s} + 3\right)} \\
&= \frac{1}{1-e^{-\frac{1}{2s} - 3}} \log^2 \left(\frac{c_0d^{\frac{2\alpha}{1+4s}}}{\epsilon^2}\right) c_0^{\frac{1}{2s} + 3} d^{1+\frac{2\alpha}{1+4s}\left(\frac{1}{2s} + 3\right)}\epsilon^{-\frac{1}{s} - 6}
\end{align*}

If, in addition, the smoothing family satisfies $\|\mu_k - \mu^*\|_{\TV} \leq C' \frac{d^{2\alpha}}{\rho_k^{2s}}$, then, by triangular inequality, we also get:
\begin{align}
\|\mu_k - \mu^*\|_{\TV} &\leq \|\bar{\mu}_k - \mu_k\|_{\TV} + C' d^{\frac{2\alpha}{1+4s}} e^{-k} \nonumber \\
&\leq (U + C' d^{\frac{2\alpha}{1+4s}}) e^{-k}
\label{eq:TV-error-k}
\end{align}
Thus, after $K = \log\left( \frac{U + C' d^{\frac{2\alpha}{1+4s}}}{\epsilon} \right)$ outer iterations, we obtain $\|\mu_k - \mu^*\|_{\TV} \leq \epsilon$. Similarly as shown previously, the number of required iterations is $N_K = \mathcal{O}\left(d^{1+\frac{2\alpha}{1+4s}\left(\frac{1}{2s} + 3\right)}\epsilon^{-\frac{1}{2s} - 3}\right)$.

\section{Proof of Theorem \ref{thm:constraint-smoothness}\label{sec:affine-smoothable}}

In the following sections, we will use the $\lesssim$ notation, meaning that we discard potential constants independent of the dimension. We set $b=0$ without loss of generality. Indeed,  one can otherwise use the change of variables $x\rightarrow x-x_0$ for any $x_0$ such $Ax_0 = b$ and benefit from the invariance of $\W_2$ distance under translation.  That is, we work with the null space $\N(A)= \{x\in \R^d: Ax=0\}$ throughout the proof. The orthogonal projection onto $\N(A)$ is denoted by $P_{\N(A)}$. Let also $C_\o$ denote the (unknown) normalizing constant associated with $\mu_\o$, namely, 
\begin{align}
C_\o = \l( \int  e^{-f(x) - \frac{\o}{2} \| Ax\|_2^2}  \der x\r)^{-1}. 
\end{align}
With $X\sim \mu_\o$,  let $\nu_\o$ be the probability measure associated with $P_{\N(A)}X$, namely, $P_{\N(A)}X \sim \nu_\o$.  Restricted to $\N(A)$, $\nu_\o$ is absolutely continuous. That is, 
\begin{align}
\der \nu_\o (x') & = 
\int_{x-x'\in \Row(A)} \der \mu_\lam(x) \nonumber\\
&
= \der_{\N} x'  \cdot C_\o \int_{x-x'\in \Row(A)} e^{-f(x) - \frac{\o}{2}\|Ax\|_2^2} \,\vol_{\Row} x,
\qquad 
\forall x'\in \N(A),
\label{eq:marginal}
\end{align}
where $\Row(A)$ is the row span of $A$, and $\der_{\N}x', \vol_{\Row}x$ denote the Lebesgue measure restricted to $\N(A),\Row(A)$, respectively. We then write that 
\begin{align}
\W_2^2(\mu_\o,\mu^*) & \le \l( \W_2(\mu_\o, \nu_\o)+ \W_2(\nu_\o,\mu^*)  \r)^2
\qquad \text{(triangle inequality)} \nonumber\\
&\le  2\W_2^2(\mu_\o, \nu_\o)+ 2\W_2^2(\nu_\o,\mu^*),
\qquad ((a+b)^2 \le 2a^2+2b^2) 
\label{eq:tri ineq for W2}
\end{align}
and next control each distance in the last line above separately. A recurring quantity throughout the analysis is the integrand of \eqref{eq:marginal}, which we systematically control below, with the proof deferred to Appendix \ref{sec:proof of technical early}. In words, Lemma~\ref{lem:technical early} states that $\mu_\o$ along $\Row(A)$ behaves more and more like a Gaussian measure as $\o$ increases. 
\begin{lem}\label{lem:technical early}
For $x\in \R^d$, let $x'=P_{\N(A)} x$ and consider $z\in \R^{d-a}$ such that $ x-x' = A^\top z$. 
For $\o>0$, it then holds that 
\begin{align}
 e^{ -f(x) - \frac{\o}{2} \|Ax\|_2^2 } & \le \l(\frac{2\pi}{\d_\o} \r)^{\frac{d-a}{2}}
 \frac{ e^{ - f(x') + \frac{\|A^{\dagger \top}\nabla f(x')\|_2^2}{2\d_\o} }}{\det(A A^\top )}
 \cdot \operatorname{normal}(z;\xi_\o,\delta_\o^{-1} (A A^\top )^{-2}),
\label{eq:up bnd on key term}
\end{align}
\begin{align}
e^{ -f(x) - \frac{\o}{2} \|Ax\|_2^2 } & \ge \l(\frac{2\pi}{\d_\o'} \r)^{\frac{d-a}{2}}
\frac{ e^{ - f(x') + \frac{\|A^{\dagger \top} \nabla f(x')\|_2^2}{2\d'_\o} } }{{ \det(A  A^\top )}}
\cdot \operatorname{normal}(z;\xi'_\o,\delta_\o'^{-1} (A A^\top )^{-2}),
\label{eq:lo bnd on key term}
\end{align}
where $\normal(\cdot;\xi,\Sigma)$ is the normal distribution with mean $\xi$ and variance $\Sigma$.  With  $\eta_{d-a}(A)>0$ denoting the smallest  singular value of full-rank $A\in \R^{(d-a)\times d}$, above we have set 
\begin{equation}
\delta_\o := \o-\frac{L_f}{\eta_{d-a}(A)^2},
\qquad 
\delta'_\o := \o+\frac{L_f}{\eta_{d-a}(A)^2},
\label{eq:defn-deltas-lem}
\end{equation}
\begin{align}
\xi_\o = \frac{ (AA^\top)^{-2} A \nabla f(x')}{\delta_\o},
\qquad 
\xi'_\o = \frac{ (AA^\top)^{-2} A \nabla f(x')}{\delta'_\o}. 
\label{eq:defn of mean}
\end{align}
\end{lem}
\noindent  Let us focus on the term $\W_2(\nu_\o,\mu^*)$ in \eqref{eq:tri ineq for W2}. The next result, proved in Appendix \ref{sec:proof of TV main}, first controls the TV distance between $\nu_\o$ and $\mu^*$, with Lemma \ref{lem:technical early} as its main ingredient.
\begin{lem}\label{lem:TV main}
For  $x'\in \N(A)$, it holds that 
\begin{align}
 \frac{ \l( \frac{\d_\o}{\d'_\o} \r)^{\frac{d-a}{2}}  e^{ \frac{\|A^{\dagger \top} \nabla f(x') \|_2^2}{2\d'_\rho}  }}{ \underset{X\sim\mu^*}{\E} e^{ \frac{\|A^{\dagger \top} \nabla f(X) \|_2^2}{2\d_\o} }  } 
 \le \frac{\der \nu_\o (x')}{\der \mu^*(x') } 
 & \le 
\frac{ \l( \frac{\d'_\o}{\d_\o} \r)^{\frac{d-a}{2}} e^{ \frac{\|A^{\dagger \top} \nabla f(x') \|_2^2}{2\d_\o}  }}{\underset{X\sim\mu^*}{\E}  e^{ \frac{\|A^{\dagger \top} \nabla f(X) \|_2^2}{2\d_\o'} }   },
\label{eq:bnd on qk lemma}
\end{align}
\begin{align}
 \frac{\l( \frac{\delta_\rho}{2\pi} \r)^{\frac{d-a}{2}} {\det(A A^\top )} } {\underset{X\sim\mu^*}{\E} e^{  \frac{\|A^{\dagger \top} \nabla f(X)\|_2^2}{2\d_\rho}  }  } \le 
 \frac{C_\rho }{C}
 \le \frac{\l( \frac{\d'_\rho}{2\pi} \r)^{\frac{d-a}{2}} {\det( A A^\top)}}{\underset{X\sim\mu^*}{\E} e^{  \frac{\|A^{\dagger \top} \nabla f(X)\|_2^2}{2\d'_\rho} }  },
\label{eq:bnd on Ck lemma}
\end{align}
\begin{align}
2\| \nu_\rho - \mu^*\|_{\TV}
& \le 
 \l( \frac{\d'_\rho}{\d_\rho} \r)^{\frac{d-a}{2}} \eta_\rho
 -
\l( \frac{\d_\rho}{\d'_\rho} \r)^{\frac{d-a}{2}}  \eta_\rho^{-1},
\label{eq:bnd on TV lemma}
\end{align}
where 
\begin{align}
\eta_\rho :=  \frac{  \underset{X\sim \mu^*}{\E} e^{ \frac{\|A^{\dagger \top} \nabla f(X) \|_2^2}{2\d_\rho}  }}{ \underset{X\sim\mu^*}{\E}  e^{ \frac{\|A^{\dagger \top} \nabla f(X) \|_2^2}{2\d_\rho'} }  } >1. 
\end{align}
\end{lem}
\noindent In order to translate the bound on the TV distance in Lemma \ref{lem:TV main} to a bound on $\W_2(\nu_\o,\mu^*)$, we would like to invoke Lemma~\ref{lem:cor-now-lem}. To this end, we show that $\mu_rho$ satisfying the light tail assumption implies that $\nu_rho$ also satisfies this assumption. Consider $X\sim \mu_\o$. Recall that  $Y=P_{\N(A)}X \sim \nu_\o$, by definition. If $P_{\Row(A)}$ denotes the orthogonal projection onto the row span of $A$ and for fixed $R>0$, we may then write that 

%To that end, by Lemma~\ref{lem:logconcavedist}, note that there exist $\b_{\mu^*},\s_{\mu^*}>0$ such that the log-concave probability measure $\mu^*$ is $(\b_{\mu^*},\s_{\mu^*})$ subexponential. On the other hand, since $\mu_k$ converges to $\mu^*$ pointwise, an argument similar to the one in the beginning of Appendix~\ref{sec:cvg-proof} shows that: there exists $\o_0>0$ such that $\mu_{\o}$ is $(2\b_{\mu^*},2\s_{\mu^*}^2)$ subexponential for every $\o\ge \o_0$. Therefore, $\mu^*,\mu_\o$ are both $(\b,\s^2)$ subexponential with $\b = 2\b_{\mu^*}, \s^2 = 2\s_{\mu^*}^2$ and when $\o\ge \o_0$. Next, consider $X\sim \mu_\o$. Recall that  $Y=P_{\N(A)}X \sim \nu_\o$, by definition. If $P_{\Row(A)}$ denotes the orthogonal projection onto the row span of $A$ and for fixed $R>0$, we may then write that 
\begin{align}
& \Pr_{Y\sim \nu_\o}[ \|Y\|_2 \ge R ]  \nonumber\\
& = \Pr_{X\sim \mu_\o}[ \|Y\|_2 \ge R \text{ and } \| P_{\Row(A)}X\|_2 \ge R ] + \Pr_{X\sim \mu_\o}[ \|Y\|_2 \ge R \text{ and } \| P_{\Row(A)}X\|_2 < R ] \nonumber\\
& = \Pr_{X\sim \mu_\o}[ \|P_{\N(A)}X\|_2 \ge R \text{ and } \| P_{\Row(A)}X\|_2 \ge R ] + \Pr_{X\sim \mu_\o}[ \|P_{\N(A)}(X)\|_2 \ge R \text{ and } \| P_{\Row(A)}X\|_2 < R ] \nonumber\\
& \le \Pr_{X \sim \mu_\o} [\| X\|_2 \ge \sqrt{2}R ]+ \Pr_{X \sim \mu_\o}[ \|X\|_2 \ge R]  \nonumber\\
& \lesssim \s e^{-\frac{R}{2\b}},
\end{align}
where the last line holds because $\mu_\o$ satisfies the light tail Assumption~\ref{assumption:lighttail}.

%We conclude that $\mu^*,\mu_\o,\nu_\o$ are all $(\b,\s^2)$ subexponential when $\o\ge \o_0$. In particular, Lemma~\ref{lem:cor-now-lem} is therefore in force with $\b,\s$.

Thus, by applying Lemma~\ref{W-KL-bound} with the choice of  $R=2\b \log \o$, we reach 
\begin{align}
& \W_2^2(\nu_\rho, \mu^*) \lesssim
%R^2 \l( 4 \|\nu_\rho - \mu^*\|_{\TV}   + 26  e^{-\min\l( \frac{R}{2\b_\rho}, %\frac{R^2}{2\sigma^2} \r)} \r),
\b^2 \log^2\o
\l(
\|\nu_\o - \mu^*\|_{\TV} + \s \o^{-1}
\r)
\label{eq:WtoTV}
\end{align}
provided that $ \o>1$. 
%where 
%\begin{align}
%\theta_{\mu,k} := \l( \sum_{i=k+1}^{\infty} r_{i}^p \Pr[ \|X\|_p \in [r_{i-1},r_i)  ] \r)^{\frac{2}{p}},
%\end{align}
%and $\theta_{\nu,k}$ is defined similarly. 
By combining Lemmas \ref{lem:TV main} and (\ref{eq:WtoTV}), we find that 
\begin{align}
\W_2^2(\nu_\rho,\mu^*) & \lesssim 
\b^2 \log^2\o\l( 
\l( \frac{\d'_\rho}{\d_\rho} \r)^{\frac{d-a}{2}} \eta_\rho
 -
\l( \frac{\d_\rho}{\d'_\rho} \r)^{\frac{d-a}{2}}  \eta_\rho^{-1}
  +  \o^{-1} \r).
\label{eq:second leg of W2 brk}
\end{align}
%To better interpret \eqref{eq:second leg of W2 brk pre}, it is desirable to replace $\t_{\nu_\b,k}$ in there with $\t_{\mu_\b,k}$. The following technical lemma is proved in Appendix \ref{sec:proof of theta nu to theta mu}. 
%\begin{lem}\label{lem:theta nu to theta mu}
%For every $k$, it holds that $\t_{\nu_\b,k} \le \t_{\mu_\b,k}$. 
%\end{lem}
%\noindent 
We now turn our attention to the first distance on the right-hand side of \eqref{eq:tri ineq for W2}, namely, $\W_2(\mu_\lam,\nu_\rho)$.  
% $P_{\N(A)}$ and $P_{\Row(A)}$ be the orthogonal projections onto the subspaces $\N(A)$ and $\Row(A)$, respectively. 
 With $X\sim \mu_\lam$, observe that $P_{\N(A)}X\sim \nu_\rho$, see \eqref{eq:marginal}. In other words, $P_{\N(A)}$ is a valid transport from $\mu_\o$ to $\nu_\o$, which might be used to upper bound $\W_2(\mu_\o,\nu_\o)$, as detailed below and proved in Appendix~\ref{sec:subopt-trans}.
\begin{lem}\label{lem:subopt-trans}
It holds that
\begin{align}
\W_2^2 (\mu_\o,\nu_\o)
\le \frac{\eta'_\o}{\d_\o} \l( \frac{\d_\o'}{\d_\o} \r)^{d-a} ,
\label{eq:first leg of W2 brk}
\end{align}
where 
\begin{align*}
\eta'_\o := 2\|A\|^2 \sqrt{\det(AA^\top)}\cdot \frac{ \underset{X\sim \mu^*}{\E}
\l[ \l( \tr(( A A^\top )^{-2})+ \frac{\| (AA^\top)^{-2} A\nabla f(X)\|_2^2}{\delta_\o} \r)  e^{ \frac{\|A^{\dagger\top} \nabla f(X)\|_2^2}{2\delta_\o }  } \r] }{\underset{X\sim\mu^*}{\E} e^{\frac{\|A^{\dagger \top} \nabla f(X)\|_2^2}{2\d_\o'}} } .
\end{align*}
\end{lem}

\noindent Finally, substituting (\ref{eq:second leg of W2 brk},\ref{eq:first leg of W2 brk}) back into \eqref{eq:tri ineq for W2}, we reach
\begin{align}
\W_2^2(\mu_\o,\mu^*) & \le 2 \W_2^2(\mu_\o,\nu_\o) +2 \W_2^2(\nu_\o,\mu^*) 
\qquad \text{(see \eqref{eq:tri ineq for W2})}
\nonumber\\
& \lesssim
\b^2 \log^2\o\l( 
\l( \frac{\d'_\rho}{\d_\rho} \r)^{\frac{d-a}{2}} \eta_\rho
 -
\l( \frac{\d_\rho}{\d'_\rho} \r)^{\frac{d-a}{2}}  \eta_\rho^{-1}
  +  \s \o^{-1}  \r) 
+ \frac{\eta'_\o}{\d_\o} \l(  \frac{\d'_\o}{\d_\o}\r)^{d-a},
\end{align}
for sufficiently large $\o$. We are particularly interested in the dependence of $\W_2(\mu_\o,\mu^*)$ on $\o$ and $d$. Under the assumption that $\|A\|^2 \sqrt{\det(AA^\top)} \leq 1$, the only dependence in $d$ comes from $Tr((AA^T)^{-2}) = \mathcal{O}(d)$. Then, since $\delta_\rho = \mathcal{O}(\rho)$ and $\frac{\delta_\rho}{\delta'_\rho} = \mathcal{O}(1)$, we finally get 
\begin{align}
\W_2^2(\mu_\o,\mu^*) =  d\wt{O}\l(\frac{1}{\o}\r),
\end{align}
thereby completing the proof of Theorem \ref{thm:constraint-smoothness}.

\section{Proof of Theorem~\ref{thm:smooth-poly}}\label{sec:smooth-poly}

Let $P_{\P}(x)$ denote the (unique) projection of $x\in\R^d$ onto the polytope $\P$. For $\o>0$ and  when $X\sim \mu_\o$, we let $\nu_\o$ denote the probability measure for $P_{\P}(X)$,  namely, $P_\P(X) \sim \nu_\o$.  To control $\W_2(\mu_\o,\mu^*)$, we first observe that 
\begin{align}
\W_2^2(\mu_\o, \mu^* ) & \le (\W_2(\mu_\o, \nu_\o) + \W_2(\nu_\o, \mu^*))^2
\qquad \text{(triangle inequality)} \nonumber\\
& \le 2\W_2(\mu_\o, \nu_\o)^2 + 2\W_2(\nu_\o, \mu^*)^2,
\qquad ((a+b)^2 \le 2a^2+2b^2)
\label{eq:brk-dwn-poly}
\end{align} 
and next control each distance on the right-hand side above separately.  The first distance on the right-hand side of  \eqref{eq:brk-dwn-poly} is controlled in Lemma \ref{lem:proj-poly}, see Appendix~\ref{sec:proj-poly} for the proof. The main idea of the proof is that the map $P_\P$ is a valid transport between $\mu_\o$ and $\nu_\o$, which we can use to find an upper bound for $\W_2(\mu_\o,\nu_\o)$. 
We remark that  the proofs of the next two lemmas provide detailed bounds but, since we are only interested in the dependence of the bounds on $\o$, we have opted to simplify both results below by considering only the large $\o$ regime and suppressing the dependence on other parameters, even though the exact expressions are given in the proofs.
\begin{lem}\label{lem:proj-poly}
For large $\o$, it asymptotically holds that 
\begin{align}
\W_2( \mu_\o,\nu_\o) = \wt{O}\l(\frac{1}{\sqrt{\o}}\r). 
\end{align}

\end{lem}
\noindent The second distance on the right-hand side of \eqref{eq:brk-dwn-poly} is bounded below, with the proof deferred to Appendix~\ref{sec:redist-mass}. The idea is again to use a suboptimal transport between $\nu_\o$ and $\mu^*$ to control their $\W_2$ distance. This particular transport redistributes some of the mass of $\mu^*$ from the interior of $\P$ to its boundary. 
\begin{lem}\label{lem:redist-mass}
For large $\o$, it asymptotically holds that 
\begin{align}
\W_2(\nu_\o,\mu^*) = \wt{O}\l(\frac{\diam(\P)}{\o^{\frac{1}{4}}}\r). 
\end{align}

\end{lem}

\noindent In light of Lemmas~\ref{lem:proj-poly} and \ref{lem:redist-mass}, we can revisit \eqref{eq:brk-dwn-poly}, which completes the proof of Theorem~\ref{thm:smooth-poly}.

\section{Proof of Lemma \ref{lem:technical early}\label{sec:proof of technical early}}

Recall that  $\eta_{d-a}(A)$ is the smallest (nonzero) singular value of full-rank $A\in \R^{(d-a)\times d}$, and that $P_{\N(A)}$ is the orthogonal projection onto the null space of $A$. Recall also the definitions 
\begin{equation}
\delta_\o = \o-\frac{L_f}{\eta_{d-a}(A)^2},
\qquad 
\delta'_\o = \o+\frac{L_f}{\eta_{d-a}(A)^2}.
\qquad \text{(see (\ref{eq:defn-deltas-lem}))}
\label{eq:defn of deltas recall}
\end{equation}
For sufficiently large $\o$, note that $0< \delta_k \le \delta'_k$. 
For $x\in \R^d$, let us set $x' = P_{\N(A)} x$ and note that $x-x'\in \Row(A)$, where $\Row(A)$ is the row span of $A$.  That is, there exists $z\in \R^{d-a}$ such that $x-x' = A^\top z$.  
Because $f$ is $L_f$  gradient Lipschitz, a standard argument shows that
\begin{align}
f(x) - f(x') - \langle x-x', \nabla f(x') \rangle \le \frac{L_f}{2}\|x-x'\|_2^2,
\qquad \forall x,x'\in \R^d,
\end{align}
which in turn, after some algebraic manipulation,  implies  that 
\begin{align}
-f(x)+f(x') + \frac{L_f}{2\eta_{d-a}(A)^2}\|A(x-x')\|_2^2 \ge - \langle A(x-x'),A^{\dagger \top}\nabla f(x') \rangle ,
\label{eq:smoothness 1}
\end{align}
\begin{align}
- f(x)+f(x') - \frac{L_f}{2\eta_{d-a}(A)^2}\|A(x-x')\|_2^2 \le - \langle A(x-x'),A^{\dagger\top}\nabla f(x') \rangle ,
\label{eq:smoothness 2}
\end{align}
where $A^{\dagger\top}$ is the transpose of the pseudo-inverse of $A$. 
Using (\ref{eq:smoothness 2}), we argue that 
\begin{align}
& e^{ -f(x) - \frac{\o}{2}\|Ax\|_2^2 } \nonumber\\
& = \exp\l( -f(x) - \frac{\o }{2} \| A(x-x') \|_2^2 \r) 
\qquad \l(x'\in \N(A) \r)
\nonumber\\
& = e^{-f(x')}  \exp\l( -f(x) + f(x') - \frac{L_f}{2\eta_{d-a}(A)^2} \| A(x-x') \|_2^2 \r)  
\cdot \exp\l( - \frac{\delta_\o}{2}\|A(x-x')\|_2^2  \r)
\qquad \text{(see (\ref{eq:defn of deltas recall}))}
\nonumber\\
& \le e^{-f(x')}  \exp\l( - \langle A(x-x'), A^{\dagger\top} \nabla f(x') \rangle   - \frac{\delta_\o}{2}\|A(x-x')\|_2^2  \r) 
\qquad \text{(see \eqref{eq:smoothness 2})}
\nonumber\\
& = \exp\l( -f(x') + \frac{\|A^{\dagger \top}\nabla f(x')\|_2^2}{2\delta_\o} \r) \cdot \exp\l( - \frac{\delta_\o}{2}\l\| Ax - Ax'- \frac{A^{\dagger\top}\nabla f(x')}{\delta_\o}   \r\|_2^2 \r) \nonumber\\
& = \exp\l( -f(x') + \frac{\|A^{\dagger \top}\nabla f(x')\|_2^2}{2\delta_\o} \r) \cdot \exp\l( - \frac{\delta_\o}{2}\l\| AA^\top \l( z - \frac{ (AA^\top)^{-2} A\nabla f(x')}{\d_\o} \r)  \r\|_2^2  \r) \nonumber\\
& = \l( \frac{2\pi}{\delta_\o} \r)^{\frac{d-a}{2}}  \frac{e^{ -f(x') + \frac{\|A^{\dagger \top}\nabla f(x')\|_2^2}{2\delta_\o} } }{\det(A A^\top )}\cdot 
\text{normal}\l(z;\xi_\o,\delta_\o^{-1}(A A^\top )^{-2}\r), 
\label{eq:pnt wise bnd for later}
\end{align}
where $\text{normal}(\cdot;\xi_\o,\delta_\o^{-1}(AA^\top )^{-2})$ is the normal distribution with the covariance matrix of $\delta_\b^{-1}(A^\top A)^{-1}$ and  mean specified as
\begin{align}
\xi_\o = \frac{(A A^{\top})^{-2} A \nabla f(x')}{\d_\o}.
\end{align}
A similar argument using \eqref{eq:smoothness 1} establishes that 
\begin{align}
e^{ -f(x) - \frac{\o}{2}\|Ax\|_2^2 } 
& \ge  \l( \frac{2\pi}{\delta'_\o} \r)^{\frac{d-a}{2}} \frac{e^{ -f(x') + \frac{\|A^{\dagger \top }\nabla f(x')\|_2^2}{2\delta'_\b} }}{\det(A A^\top ) }  \cdot 
\text{normal}\l(z;\xi'_\o,\delta_\o'^{-1}(A A^\top )^{-2}\r),
\label{eq:pnt wise bnd for later 2}
\end{align}
where the mean of the normal distribution is 
\begin{equation}
\xi'_\o =  \frac{ (AA^\top)^{-2} A \nabla f(x')}{\delta'_\o},
\end{equation}
which completes the proof of Lemma \ref{lem:technical early}.

\section{Proof of Lemma \ref{lem:TV main} \label{sec:proof of TV main}}
For fixed $x'\in \N(A)$, we write that 
\begin{align}
\frac{ \der \nu_\rho (x') }{\der_{\N} x'}
& = C_{\o} \int_{x-x'\in \Row(A)} e^{ - f(x) - \frac{\o}{2}\|Ax\|_2^2 } \, \vol_{\Row}x
\qquad \text{(see \eqref{eq:marginal})}
 \nonumber\\
& \le  C_{\o} \l( \frac{2\pi}{\delta_{\o}} \r)^{\frac{d-a}{2}} 
\frac{e^{  -f(x')+ \frac{\| A^{\dagger \top} \nabla f(x')\|_2^2}{2\delta_{\o}}}}{\det(A A^\top ) }
 \int_{z} 
  \text{normal}(z;\xi_{\o},\delta_{\o}^{-1}(A A^\top)^{-2})\, \der z
\qquad \text{(see \eqref{eq:up bnd on key term})} \nonumber\\
& = C_{\o} \l( \frac{2\pi}{\delta_{\o}} \r)^{\frac{d-a}{2}}
\frac{e^{  -f(x')+ \frac{\|A^{\dagger \top} \nabla f(x')\|_2^2}{2\delta_{\o}}}} {\det(A A^\top )}.
\label{eq:bnd on qk raw 1}
\end{align}
A matching lower bound is obtained as 
\begin{align}
\frac{ \der \nu_{\o}(x') }{\der_{\N} x'}& = C_{\o} \int_{x-x'\in \Row(A)} e^{ - f(x) - \frac{\o}{2}\|Ax\|_2^2 } \,\vol_{\Row}x
\qquad \text{(see \eqref{eq:marginal})}
 \nonumber\\
 & \ge C_{\o} \l( \frac{2\pi}{\delta'_{\o}} \r)^{\frac{d-a}{2}}
\frac{e^{- f(x') + \frac{\|A^{\dagger \top} \nabla f(x')\|_2^2}{2\delta'_{\o}} } }{\det(A A^\top) }
  \int_{x-x'\in \Row(A)}  
\text{normal}(z;\xi'_{\o},\delta_{\o}'^{-1}(A A^\top )^{-2}) \,\vol z
\qquad \text{(see \eqref{eq:pnt wise bnd for later 2})} \nonumber\\
& = C_{\o} \l( \frac{2\pi}{\delta'_{\o}} \r)^{\frac{d-a}{2}} 
\frac{e^{- f(x') + \frac{\|A^{\dagger \top} \nabla f(x')\|_2^2}{2\delta'_{\o}} }}{\det(A A^\top )}. 
\label{eq:bnd on qk raw 2}
\end{align}
Because evidently  $\int_{x'\in \N(A)} \der \nu_{\o}(x')= 1$, the above two bounds immediately imply that 
\begin{align}
C_\rho & \ge \l( \frac{\delta_\rho}{2\pi} \r)^{\frac{d-a}{2}} \det(A A^\top ) \l( \int_{x''\in \N(A)}e^{ -f(x'') + \frac{\|A^{\dagger \top} \nabla f(x'')\|_2^2}{2\d_\o}  } \,\vol_{\N}x'' \r)^{-1}\nonumber\\
& = \l( \frac{\delta_\rho }{2\pi} \r)^{\frac{d-k}{2}}
\frac{C \det(A A^\top ) }{\underset{X\sim \mu^*}{\E} e^{ \frac{\|A^{\dagger \top} \nabla f(X)\|_2^2}{2\d_\rho} } },
\end{align}
\begin{align}
C_\rho & \le \l( \frac{\d'_\rho}{2\pi} \r)^{\frac{d-a}{2}}
\frac{C \det(A A^\top ) }{\underset{X\sim \mu^*}{\E} e^{ \frac{\| A^{\dagger \top} \nabla f(X)\|_2^2}{2\d_\rho'} } }. 
\end{align}
By substituting the above bounds on $C_\rho$ back into (\ref{eq:bnd on qk raw 1},\ref{eq:bnd on qk raw 2}), we reach
\begin{align}
\der \nu_\rho (x') & \le \l( \frac{\d'_\rho}{\d_\rho} \r)^{\frac{d-a}{2}} \frac{C e^{-f(x') + \frac{\|A^{\dagger \top} \nabla f(x')\|_2^2}{2\d_\rho}  }  \der_{\N} x'}{\underset{X\sim \mu^*}{\E} e^{\frac{\|A^{\dagger \top} \nabla f(X)\|_2^2}{2\d_\rho'}} } \nonumber\\
& = \l( \frac{\d'_\rho}{\d_\rho} \r)^{\frac{d-a}{2}} 
\frac{\der \mu^*(x')  e^{ \frac{\|A^{\dagger \top} \nabla f(x') \|_2^2}{2\d_\rho}  }}{ \underset{X\sim\mu^*}{\E}  e^{ \frac{\|A^{\dagger \top} \nabla f(x') \|_2^2}{2\d_\rho'} }  },
\label{eq:bnd on qk final 1}
\end{align}
\begin{align}
\der \nu_\rho (x') 
& \ge \l( \frac{\d_\rho}{\d'_\rho} \r)^{\frac{d-a}{2}} \frac{ \der \mu^*(x')  e^{ \frac{\|A^{\dagger \top} \nabla f(x') \|_2^2}{2\d'_\rho}  }}{ \underset{X\sim \mu^*}{\E}  e^{ \frac{\|A^{\dagger \top} \nabla f(x') \|_2^2}{2\d_\rho} }   }, 
\label{eq:bnd on qk final 2}
\end{align}
for every $x'\in \N(A)$. In particular, it follows that 
\begin{align}
\der \mu^*(x') \l(
  \frac{ \l( \frac{\d_\rho}{\d'_\rho} \r)^{\frac{d-a}{2}}  e^{ \frac{\|A^{\dagger \top} \nabla f(x') \|_2^2}{2\d'_\rho}  }}{ \underset{X\sim \mu^*}{\E}  e^{ \frac{\|A^{\dagger \top} \nabla f(x') \|_2^2}{2\d_\rho} }   } -1
\r)
\le \der \nu_\rho(x') - \der \mu^*(x') \le \der \mu^*(x') \l( 
\frac{  \l( \frac{\d'_\rho}{\d_\rho} \r)^{\frac{d-a}{2}}  e^{ \frac{\|A^{\dagger \top} \nabla f(x') \|_2^2}{2\d_\rho}  }}{ \underset{X\sim\mu^*}{\E}  e^{ \frac{\|A^{\dagger \top} \nabla f(x') \|_2^2}{2\d_\rho'} }  } -1\r),
\end{align}
and, consequently, 
\begin{align}
|\der \nu_\rho (x') - \der \mu^* (x') | \le 
\der \mu^* \l( 
\frac{  \l( \frac{\d'_\rho}{\d_k} \r)^{\frac{d-a}{2}}  e^{ \frac{\|A^{\dagger \top} \nabla f(x') \|_2^2}{2\d_k}  }}{ \underset{X\sim\mu^*}{\E}  e^{ \frac{\|A^{\dagger \top} \nabla f(x') \|_2^2}{2\d_k'} }  }
- 
 \frac{ \l( \frac{\d_k}{\d'_k} \r)^{\frac{d-a}{2}}  e^{ \frac{\|A^{\dagger \top} \nabla f(x') \|_2^2}{2\d'_k}  }}{ \underset{X\sim \mu^*}{\E}  e^{ \frac{\|A^{\dagger \top} \nabla f(x') \|_2^2}{2\d_k} }   } 
\r),
\end{align}
for every $x'\in \N(A)$. By integrating both sides above, we find that
\begin{align}
2\| \nu_k - \mu^*\|_{\TV} & 
= \int_{x'\in \N(A)} | \der \nu_\rho (x') - \der \mu^*(x') |\,  \vol_{\N} x'
\qquad \text{(see \eqref{eq:useful-tv-bnd})}
 \nonumber\\
& \le 
 \l( \frac{\d'_\rho}{\d_\rho} \r)^{\frac{d-a}{2}}  \frac{  \underset{X\sim \mu^*}{\E} e^{ \frac{\|A^{\dagger \top} \nabla f(x') \|_2^2}{2\d_\rho}  }}{ \underset{X\sim\mu^*}{\E}  e^{ \frac{\|A^{\dagger \top} \nabla f(x') \|_2^2}{2\d_\rho'} }  }
 -
\l( \frac{\d_\rho}{\d'_\rho} \r)^{\frac{d-a}{2}}    \frac{ \underset{X\sim \mu^*}{\E} e^{ \frac{\|A^{\dagger \top} \nabla f(x') \|_2^2}{2\d'_\rho}  }}{ \underset{X\sim \mu^*}{\E}  e^{ \frac{\|A^{\dagger \top} \nabla f(x') \|_2^2}{2\d_\rho} }   } 
, 
\end{align}
which completes the proof of Lemma \ref{lem:TV main}.

\section{Proof of Lemma \ref{lem:subopt-trans} \label{sec:subopt-trans}}
With $X\sim \mu_\lam$, we have by definition that $P_{\N(A)}X\sim \nu_\rho$. In other words, $P_{\N(A)}$ is a valid transport from $\mu_\o$ to $\nu_\o$, which we will use below to upper bound $\W_2(\mu_\o,\nu_\o)$. 
Recalling the definition of  $\W_2$ metric, we write that  
\begin{align}
\W_2^2(\mu_\lam,\nu_\o) & \le \underset{X\sim\mu_\o}{\E} \|X - P_{\N(A)} X \|_2^2
\qquad \text{(see \eqref{eq:W-defined})}
 \nonumber\\
& = \underset{X\sim\mu_\o}{\E} \|P_{\Row(A)} X \|_2^2 \nonumber\\
& = C_\o \int \| P_{\Row(A)} x\|_2^2 e^{-f(x) - \frac{\o}{2}\|Ax\|_2^2} \der x.
\label{eq:brk W2 into cnd}
\end{align} 
Every $x\in \R^d$ can be decomposed as $x= P_{\N(A)} x + P_{\Row(A)} x=:x' + A^\top z $, where $x'$ is the projection of $x$ onto $\N(A)$ and $z\in \R^{d-a}$. Also, let $\der_{\N(A)}x$ and $\der_{\Row}x$ denote the Lebesgue measure restricted to $\N(A)$ and $\Row(A)$, respectively. Then notice that 
\begin{align}
\der x = \der_{\N} x \der_{\Row} x = \der_{\N} x' \cdot \sqrt{\det(AA^\top)} \der z.
\end{align}
This change of variables allows us to rewrite the integral in the last line of \eqref{eq:brk W2 into cnd} as 
\begin{align}
& \int \| P_{\Row(A)} x\|_2^2 e^{-f(x) - \frac{\o}{2}\|Ax\|_2^2} \der x.  \nonumber\\
& =  \sqrt{\det(AA^\top)}  \int_{x'\in \N(A) } \int_{z} \|  A^\top z \|_2^2 e^{-f(x) - \frac{\o}{2}\|Ax\|_2^2} \,  \der z \der_{\N} x'  \nonumber\\
&  \le  \l( \frac{2\pi}{\delta_\o} \r)^{\frac{d-a}{2}} \frac{e^{ -f(x') + \frac{\|A^{\dagger \top }\nabla f(x')\|_2^2}{2\delta_\o} }}{\sqrt{\det(A A^\top )}} \int_{x'\in \N(A)}  \int_z   \|A^\top z\|_2^2 \cdot 
\text{normal}(z;\xi_\o,\delta_\o^{-1}( AA^\top )^{-2}) \, \vol z  \der_{\N}x' \nonumber\\
&  \le  \l( \frac{2\pi}{\delta_\o} \r)^{\frac{d-a}{2}} \frac{\|A\|^2 e^{ -f(x') + \frac{\|A^{\dagger \top }\nabla f(x')\|_2^2}{2\delta_\o} }}{\sqrt{\det(A A^\top )}}   \int_{x'\in \N(A)} \int_z \| z\|_2^2 \cdot 
\text{normal}(z;\xi_\o,\delta_\o^{-1}( AA^\top )^{-2}) \,\vol z \der_\N x', 
\label{eq:dist lemma proof 0}
\end{align}
where the second-to-last line uses \eqref{eq:up bnd on key term}. 
We bound the inner integral in the last line of \eqref{eq:dist lemma proof 0} as 
\begin{align}
& \int  \|z\|_2^2 \cdot  \text{normal}\l( z;\xi_\o, \delta_\o^{-1}(A A^\top )^{-2}  \r) \, \der z
\nonumber\\
& \le 
\int (2\|z-\xi_\o\|_2^2 +2\| \xi_\o\|_2^2)\cdot  \text{normal}\l( z;\xi_\o, \delta_\o^{-1} (A A^\top)^{-2} \r) \,\vol z
\qquad \l(\|a+b\|_2^2 \le 2\|a\|_2^2+2\|b\|_2^2\r)
 \nonumber\\
& =
 2 \int  \|z-\xi_\o\|_2^2 \cdot  \text{normal}\l( z;\xi_\o, \delta_\o^{-1} (A A^\top )^{-2})\r) \,\vol z
\nonumber\\ 
& \qquad  +
\frac{2\| (AA^\top)^{-2} A \nabla f(x')\|_2^2}{\delta_\o^2} \int    \text{normal}\l( z;\xi_\o, \delta_\o^{-1}  (A A^\top)^{-2} \r) \,\vol z
\qquad \text{(see \eqref{eq:defn of mean})}
  \nonumber\\
 & = \frac{2\tr((A A^\top )^{-2})}{\delta_\o} + \frac{2\| (AA^\top)^{-2} A \nabla f(x')\|_2^2}{\delta_\o^2},
\end{align}
where the last line uses the properties of the normal distribution. After substituting the above bound in \eqref{eq:dist lemma proof 0}, we reach
\begin{align}
& \int \|P_{\Row(A)}x\|_2^2 e^{-f(x) - \frac{\o}{2}\|Ax\|_2^2}  \der x\nonumber\\
& \le  \frac{2}{\d_\o} \l( \frac{2\pi}{\delta_\o} \r)^{\frac{d-a}{2}} \frac{\|A\|^2 }{ \sqrt{\det(A A^\top )}} 
\nonumber\\
& \qquad \qquad \cdot  \int_{x'\in \N(A) } \l( \tr((A A^\top )^{-2}) + \frac{\| (A A^\top)^{-2} A \nabla f(x') \|_2^2}{\delta_\o} \r) 
  e^{ - f(x') + \frac{\|A^{\dagger \top}\nabla f(x')\|_2^2}{2\delta_\o }  } \der_\N x. 
\label{eq:dist lemma proof 1}
\end{align}
In turn, substituting the above bound back into \eqref{eq:brk W2 into cnd}, we find that 
\begin{align}
& \W_2^2(\mu_\o,\nu_\o)  \nonumber\\
& \le C_\o \int \| P_{\Row(A)}x\|_2^2 e^{-f(x) - \frac{\o}{2}\|Ax\|_2^2} \der x
\qquad \text{(see \eqref{eq:brk W2 into cnd})}
\nonumber\\
&\le \frac{2C_\o}{\d_\o} \l( \frac{2\pi}{\delta_\o} \r)^{\frac{d-a}{2}} \frac{\|A\|^2}{\sqrt{\det(AA^\top)}} \nonumber\\
& \qquad \qquad \cdot \int_{x'\in \N(A)} 
\l( \tr((A A^\top )^{-2})+ \frac{\| (AA^\top)^{-2} A \nabla f(x')\|_2^2}{\delta_\o} \r) 
 e^{ - f(x') + \frac{\| A^{\dagger \top} \nabla f(x')\|_2^2}{2\delta_\o }  }  \der_{\N} x'
\quad \text{(see \eqref{eq:dist lemma proof 1})} \nonumber\\
& \le \frac{2\|A\|^2\sqrt{\det(AA^\top)} }{\d_\o } \l( \frac{\d_\o'}{\d_\o} \r)^{\frac{d-a}{2}} \frac{\int_{x'\in \N(A)} 
\l( \tr(( A A^\top )^{-2})+ \frac{\| (AA^\top)^{-2} A\nabla f(x')\|_2^2}{\delta_\o} \r)  C e^{ - f(x') + \frac{\|A^{\dagger\top} \nabla f(x')\|_2^2}{2\delta_\o }  } \der_{\N} x' }{\underset{X\sim\mu^*}{\E} e^{\frac{\|A^{\dagger \top} \nabla f(X)\|_2^2}{2\d_\o'}} } 
\text{(see \eqref{eq:bnd on Ck lemma})} \nonumber\\
& \le \frac{2\|A\|^2\sqrt{\det(AA^\top)} }{\d_\o } \l( \frac{\d_\o'}{\d_\o} \r)^{\frac{d-a}{2}} \frac{\int_{x'\in \N(A)} 
\l( \tr(( A A^\top )^{-2})+ \frac{\| (AA^\top)^{-2} A\nabla f(x')\|_2^2}{\delta_\o} \r)  e^{ \frac{\|A^{\dagger\top} \nabla f(x')\|_2^2}{2\delta_\o }  } \der \mu^*(x') }{\underset{X\sim\mu^*}{\E} e^{\frac{\|A^{\dagger \top} \nabla f(X)\|_2^2}{2\d_\o'}} } 
\nonumber\\
& = \frac{2\|A\|^2\sqrt{\det(AA^\top)} }{\d_\o } \l( \frac{\d_\o'}{\d_\o} \r)^{\frac{d-a}{2}} \frac{ \underset{X\sim \mu^*}{\E}
\l[ \l( \tr(( A A^\top )^{-2})+ \frac{\| (AA^\top)^{-2} A\nabla f(X)\|_2^2}{\delta_\o} \r)  e^{ \frac{\|A^{\dagger\top} \nabla f(X)\|_2^2}{2\delta_\o }  } \r] }{\underset{X\sim\mu^*}{\E} e^{\frac{\|A^{\dagger \top} \nabla f(X)\|_2^2}{2\d_\o'}} } 
\nonumber\\
& =: \frac{\eta'_\o}{\d_\o} \l( \frac{\d_\o'}{\d_\o} \r)^{d-a} ,
\end{align}
which completes the proof of Lemma~\ref{lem:subopt-trans}.

\section{Proof of Lemma~\ref{lem:proj-poly}}\label{sec:proj-poly}

Since $P_{\P}$ is a valid transport between $\mu_\o$ and $\nu_\o$, we may write that 
\begin{align}
\W_2^2(\mu_\o,\nu_\o) & \le \E_{X\sim \mu_\o} \| X- P_{\P}(X)\|_2^2 
\qquad \text{(see \eqref{eq:W-defined})}
\nonumber\\
& = \sum_{\F \in F(\P) } \E_{X\sim \mu_\o}\l[ \| X- P_{\P}(X)\|_2^2 \cdot 1(P_{\P}(X)\in \relint(\F)) \r], 
\label{eq:decomp-sum-proj}
\end{align}
where $F(\P)$ is the set of faces of the polytope $\P$ and $1( P_{\P}(X)\in \relint(\F) )$ is the indicator for the event $\{ P_{\P}(X)\in \relint(\F) \}$. Note that the second line above decomposes the expectation over all faces of $\P$.  

Consider a face $\F\in F(\P)$ and suppose that $\F$ is an $a$-dimensional face, where $ a \in \{0,\cdots, \dim(\P) \}$. We express $\F$ as the intersection of $\P$ with the affine hull of $\F$, which we denote by  $\aff(\F) = \{x\in \R^d :  A_\F x = b_\F \}$, where  $A_\F\in \R^{(d-a)\times d}$ and $b_\F\in \R^{d-a}$ are the corresponding row subsets of $A,b$, respectively. Without loss of generality, we assume that $A_\F$ is full-rank. Indeed, otherwise one can simply select a smaller subset of rows of $A$ that are linearly independent. 
For $x\in \R^d$, the optimality of projection  implies that 
\begin{align}
x  - P_{\P}(x) \in \nc_\P(P_\P(x)),
\end{align}
where $\nc_\P(x')$ is the normal cone to $\P$ at $x'$. 
Moreover, if $x\in\R^d$ projects onto the face $\F$, the projection operator takes a simple form. That is, 
\begin{align}
P_{\P}(x) \in \relint(\F) \Longrightarrow P_{\P}(x) = P_{\aff(\F)}(x) = A_{\F}^\dagger ( A_\F x - b_\F ).
\label{eq:face-proj}
\end{align}
%\begin{align}
%x - P_{\P}(x) \in \nc_{\P}(P_{\P}(x)),
%\label{eq:normal-cone}
%\end{align}
%\anote{notation assumes $A_\F$ is full rank which is not necessary. fix later.}
%where $\nc_{\P}(P_{\P}(x))$ is the normal cone of the polytope $\P$ at $P_{\P}(x)$. 
With this setup, note that the expectation in the last line of \eqref{eq:decomp-sum-proj} can be written as 
\begin{align}
& \E_{X\sim \mu_\o}\l[ \| X- P_{\P}(X)\|_2^2 \cdot 1(P_{\P}(X)\in \relint(\F)) \r] \nonumber\\
&  =\int_{P_\P(x)\in \relint(\F)} \| x - P_\P(x)\|_2^2 \der \mu_\o(x) \nonumber\\
& = C_\o \int_{P_\P(x)\in \relint(\F)} \| x - P_\P(x)\|_2^2 e^{-f(x) - \frac{\o}{2}\|(Ax-b)_+\|_2^2} \der x \nonumber\\
& \le C_\o \int_{P_\P(x)\in \relint(\F)} \| x - P_\P(x)\|_2^2 e^{-f(x) - \frac{\o}{2}\|(A_\F x-b_\F)_+\|_2^2} \der x ,
\label{eq:cnd-exp-fixed-face}
\end{align}
%where $\E_{\mu_\o|x'}$ is the conditional expectation with respect to the subspace $\s$-algebra on the face  $\F$. 
where 
\begin{align}
C_\o 
= \l( \int e^{-f(x) - \frac{\o}{2}\| (Ax-b)_+\|_2^2} \der x\r)^{-1}
\label{eq:norm-cte-polytope}
\end{align}
is the (unknown) normalizing constant for the probability measure $\mu_\o$. 
%(As a reminder, $\der_{\aff(\P)} \mu^*$ and $\der_{\aff(\P)} x$ are the restriction of $\mu^*$ and the Lebesgue measure to the affine hull of the polytope $\P$.) 
Let $\Row(A_\F)$ denote the row span of $A_\F$. To relate $(A_\F x - b_\F)_+$ to $A_\F x - b_\F$ in the last line of \eqref{eq:cnd-exp-fixed-face}, let us introduce $\psi_\P$, a geometric attribute of  $\P$ defined as 
\begin{align}
\psi_\P = \min_{\F\in F(\P)} \min_{x'\in \relint(\F)} \min_{g\in \nc_\P(x')} \frac{\|(A_\F g)_+\|_2^2}{\|A_\F g\|_2^2}.
\label{eq:defn-psi}
\end{align}
For every $g\in \nc_\P(x')$, we have that $\|(A_\F g)_+\|_2>0$ and therefore, by continuity and compactness,  $\psi_\P>0$. 
%\anote{easy to see that $\psi_\P$ is positive and bounded below but I also think it can be written in terms of more well-known quantities for a polytope. so I'll continue to think about this. } 
Every $x$ that projects onto $\relint \F$, by the Meuro's decomposition, can be decomposed as 
\begin{align}
x = x' + g, 
\qquad 
x'\in \relint(\F), 
\qquad 
g \in \nc_\P(x).
\end{align}
%there exist $x'\in \relint(\F)$ and $g\in \nc_\P(x)$ such that  $x=x' + g$. 
In fact, it is easy to verify that $\nc(x') \subseteq \Row(A_\F)$ and, consequently, there exists $z\in \R^{d-a}$ such that $g= A_\F^\top z$. 
It also follows that 
\begin{align}
A_\F x- b_\F = A_\F (x' + g) - A_\F x' = A_\F g = A_\F A_\F^\top z. 
\end{align}
We can therefore bound the integral in the last line of \eqref{eq:cnd-exp-fixed-face} as 
\begin{align}
& \int_{P_\P(x)\in \relint(\F)} \| x - P_\P(x)\|_2^2 e^{-f(x) - \frac{\o}{2}\|(A_\F x-b_\F)_+\|_2^2} \der x \nonumber\\
& = \int_{P_\P(x)\in \relint(\F)} \| x - P_\P(x)\|_2^2 e^{-f(x) - \frac{\o}{2}\|(A_\F g)_+\|_2^2} \der x \nonumber\\
& \le  \int_{P_\P(x)\in \relint(\F)} \| x - P_\P(x)\|_2^2 e^{-f(x) - \frac{\o\Psi_\P}{2}\|A_\F g\|_2^2} \der x 
\qquad \text{(see \eqref{eq:defn-psi})}
\nonumber\\
& = \int_{P_\P(x)\in \relint(\F)} \| x - P_\P(x)\|_2^2 e^{-f(x) - \frac{\o\Psi_\P}{2}\|A_\F x-b_\F\|_2^2} \der x.
\label{eq:cnd-exp-fixed-face-postPsi}
\end{align}
Every $x\in \R^d$ can also be decomposed as $x= P_{\aff(\F)} x + P_{\Row(A_\F)} x = x' + A_\F^\top z$, where $x'$ is the projection of $x$ onto the affine hull of the face $\F$ and $z\in \R^{d-a}$. Note also that 
\begin{align}
\der x = \der_{\aff(\F)} x \der_{\Row(A_\F)} x = \der_{\aff(\F)} x' \cdot \sqrt{\det(A_\F A_\F^\top)} \der z.
\end{align}
With the above change of variables, we rewrite the integral in the last line of \eqref{eq:cnd-exp-fixed-face-postPsi} as 
\begin{align}
& \int_{P_\P(x)\in \relint(\F)} \| x - P_\P(x)\|_2^2 e^{-f(x) - \frac{\o\Psi_\P}{2}\|A_\F x-b_\F\|_2^2} \der x  \nonumber\\
& = \sqrt{\det(A_\F A_\F^\top)} \int_{x'\in \relint(\F)} \int_{A_\F^\top z \in \nc_\P(x')} \| A_\F^\top z \|_2^2 e^{-f(x) - \frac{\o}{2}\|A_\F(x -x' ) \|_2^2}
\der z \der_{\aff(\F)} x' \nonumber\\
& \le  \| \A_\F \|^2   \sqrt{\det(A_\F A_\F^\top)}\int_{x'\in \relint(\F)} \int_{A_\F^\top z \in \nc_\P(x')} \| z \|_2^2 e^{-f(x) - \frac{\o}{2}\|A_\F(x -x' ) \|_2^2}
\der z \der_{\aff(\F)} x' \nonumber\\
& \le  \l(\frac{2\pi}{\d_{\o}} \r)^{\frac{d-a}{2}}
 \frac{\|A_\F\|^2 }{\sqrt{\det(A_\F A_\F^\top )}} \nonumber\\
& \qquad \cdot  \int_{x'\in \relint(\F)} 
e^{ - f(x') + \frac{\|A_\F^{\dagger \top}\nabla f(x')\|_2^2}{2\d_{\o}} }
  \int_{ A_\F^\top z\in \nc_\P(x') } \|z\|_2^2 \operatorname{normal}(z;\xi_{\o}(x'),\delta_{\o}^{-1} (A_\F A_\F^\top )^{-2}) \der z \der_{\aff(\F)} x',
  \label{eq:poly-dist-1}
\end{align}
where the last line uses \eqref{eq:up bnd on key term} and 
\begin{align}
\d_{\o} = \o \psi_\P- \frac{L_f}{\min_{\F\in F(\P)} \eta_{d-a}(A_\F)^2 },
\qquad 
\xi_{\o}(x') = \frac{(A_\F A_\F^{\top})^{-2} A_\F \nabla f(x')}{\d_\o}.
\end{align} 
The normal distribution in the last line of \eqref{eq:poly-dist-1} is not centered at the origin. With some algebraic manipulation, we can achieve this and simplify the inner integral in the last line of \eqref{eq:poly-dist-1} as 
\begin{align}
& \int_{ A_\F^\top z \in \nc_\P(x') }  \|z\|_2^2 \cdot  \text{normal}\l( z;\xi_{\o}(x'), \delta_\o^{-1}(A_\F A_\F^\top )^{-2}  \r) \, \der z
\nonumber\\
& \le e^{ \frac{ \| (A_\F A_\F^\top )^{-1} \xi_{\o}(x') \|_2^2 }{2\d_\o} } \int_{ A_\F^\top z \in \nc_\P(x') }  \|z\|_2^2 \cdot  \text{normal}\l( z;0, 2\delta_\o^{-1}(A_\F A_\F^\top )^{-2}  \r) \, \der z
\qquad \l( \|a-b\|_2^2 \ge \frac{\|a\|_2^2}{2}- \|b\|_2^2 \r)
\nonumber\\
& =:   \frac{2\t_\F}{\d_\o} e^{ \frac{ \| (A_\F A_\F^\top )^{-1} \xi_{\o}(x') \|_2^2 }{2\d_\o} } \tr\l( (A_\F A_\F^\top )^{-2} \r),
\end{align}
where the last line uses the moments of the normal distribution. 
%where $\t_\F$ the angle of the cone $\{z : A_\F^\top z \in \nc_\P (x') \}$, which is invariant for every $x'\in \relint(\F)$. 
%
%\begin{align}
%& \int_{ A_\F^\top z \in \nc_\P(x') }  \|z\|_2^2 \cdot  \text{normal}\l( z;\xi_{\F,\o}, \delta_\o^{-1}(A A^\top )^{-2}  \r) \, \der z
%\nonumber\\
%& \le 
%\int_{A_\F^\top z \in \nc_\P(x')} (2\|z-\xi_{\F,\o}\|_2^2 +2\| \xi_{\F,\o}\|_2^2)\cdot  \text{normal}\l( z;\xi_{\F,\o}, \delta_\o^{-1} (A_\F A_\F^\top)^{-2} \r) \,\vol z
%\qquad \l(\|a+b\|_2^2 \le 2\|a\|_2^2+2\|b\|_2^2\r)
% \nonumber\\
%& =
% 2 \int_{A_\F^\top z \in \nc_\P(x') }  \|z-\xi_{\F,\o}\|_2^2 \cdot  \text{normal}\l( z;\xi_{\F,\o}, \delta_\o^{-1} (A_\F A_\F^\top )^{-2})\r) \,\vol z
%\nonumber\\ 
%& \qquad  +
%\frac{2\| (A_\F A_\F^\top)^{-2} A_\F \nabla f(x')\|_2^2}{\delta_\o^2} \int_{ A_\F^\top z \in \nc_\P(x') }    \text{normal}\l( z;\xi_{\F,\o}, \delta_\o^{-1}  (A_\F A_\F^\top)^{-2} \r) \,\vol z
%\qquad \text{(see \eqref{eq:defn of mean})}
%  \nonumber\\
% & \le  \frac{2 \t_{\F} \tr((A_\F A_\F ^\top )^{-2})}{\delta_\o} + \frac{2\t_\F \| (A_\F A_\F^\top)^{-2} A_\F \nabla f(x')\|_2^2}{\delta_\o^2},
%\end{align}
%where $\t_\F$ is sufficiently large. 
After substituting the above bound in \eqref{eq:poly-dist-1}, we arrive at 
\begin{align}
& \int_{P_\P(x)\in \relint(\F)} \| x - P_\P(x)\|_2^2 e^{-f(x) - \frac{\o\Psi_\P}{2}\|A_\F x-b_\F\|_2^2} \der x  \nonumber\\
& \le \frac{2\t_\F}{\d_\o} \l(\frac{2\pi}{\d_{\o}} \r)^{\frac{d-a}{2}}
 \frac{\|A_\F\|^2 
 \tr\l( (A_\F A_\F)^{-2} \r) 
 }{\sqrt{\det(A_\F A_\F^\top )}} \nonumber\\
& \qquad \cdot  \int_{x'\in \relint(\F)} 
e^{ - f(x') + \frac{\|A_\F^{\dagger \top}\nabla f(x')\|_2^2}{2\d_{\o}} + \frac{ \| (A_\F A_\F^\top )^{-1} \xi_{\o}(x') \|_2^2 }{2\d_\o} } \der_{\aff(A_\F)} x'. 
\end{align}
In turn, by substituting the above bound back into \eqref{eq:cnd-exp-fixed-face-postPsi} and then into \eqref{eq:cnd-exp-fixed-face}, we reach 
\begin{align}
& \E_{X\sim \mu_\o}\l[ \| X- P_{\P}(X)\|_2^2 \cdot 1(P_{\P}(X)\in \relint(\F)) \r] \nonumber\\
& \le \frac{2C_\o \t_\F}{\d_\o} \l(\frac{2\pi}{\d_{\o}} \r)^{\frac{d-a}{2}}
 \frac{\|A_\F\|^2 
 \tr\l( (A_\F A_\F)^{-2} \r) 
 }{\sqrt{\det(A_\F A_\F^\top )}} \nonumber\\
& \qquad \cdot  \int_{x'\in \relint(\F)} 
e^{ - f(x') + \frac{\|A_\F^{\dagger \top}\nabla f(x')\|_2^2}{2\d_{\o}} + \frac{ \| (A_\F A_\F^\top )^{-1} \xi_{\o}(x') \|_2^2 }{2\d_\o} } \der_{\aff(\F)} x'. 
\end{align}
Lastly, by plugging the above bound back into  (\ref{eq:decomp-sum-proj}), we reach 
\begin{align}
\W_2^2(\mu_\lam,\nu_\rho) 
& \le \frac{2C_\o}{\d_\o} 
\sum_{\F\in F(\P)} 
\l( \frac{2\pi}{\delta_\o} \r)^{\frac{d-\dim(\F)}{2}}  
\frac{\t_\F \|A_\F  \|^2  \tr\l((A_\F A_\F^\top )^{-2}\r)   }{ \sqrt{\det(A_\F A_\F^\top )}} 
 \nonumber\\
& \qquad \cdot \int_{x'\in \relint(\F)}    
e^{ - f(x') + \frac{\|A_\F ^{\dagger \top}\nabla f(x')\|_2^2}{2\delta_\o }  + \frac{ \| (A_\F A_\F^\top )^{-1} \xi_{\o}(x') \|_2^2 }{2\d_\o} }
   \der_{\aff(\F)} x' \nonumber\\
& =     \frac{2C_\o}{\d_\o C} 
\sum_{\F\in F(\P)} 
\l( \frac{2\pi}{\delta_\o} \r)^{\frac{d-\dim(\F)}{2}}  
\frac{\t_\F \|A_\F  \|^2  \tr\l((A_\F A_\F^\top )^{-2}\r)   }{ \sqrt{\det(A_\F A_\F^\top )}} 
 \nonumber\\
& \qquad \cdot \int_{x'\in \relint(\F)}    
e^{  \frac{\|A_\F ^{\dagger \top}\nabla f(x')\|_2^2}{2\delta_\o }  + \frac{ \| (A_\F A_\F^\top )^{-1} \xi_{\o}(x') \|_2^2 }{2\d_\o} }
   \frac{\der_{\aff(\P)} \mu^*(x')  }{\der_{\aff(\P)} x'}  \der_{\aff(\F)} x', 
   \label{eq:W2-before-Crho}
\end{align}
where $C$ is the normalizing constant of $\mu^*$. 
It remains to estimate the normalizing constant $C_\o$ above. To that end, 
%let $A_{\ol{\F}},b_{\ol{\F}}$ correspond to  all the inactive constraints in the face $\F$, namely, $A_{\ol{\F}}x < b_{\ol{\F}}$ for every $x\in \F$. 
we use the same three-fold decomposition as earlier to write that 
\begin{align}
 C_\o^{-1}
& =  \int  e^{-f(x) - \frac{\o}{2}\| (Ax-b)_+ \|_2^2} \der x\nonumber\\
& =  \sum_{\F \in F(\P)} \int_{P_\P(x)\in \relint(\F)} e^{-f(x) - \frac{\o}{2} \| (Ax-b)_+\|_2^2} \der x \nonumber\\ 
& =   \sum_{\F \in F(\P)} \int_{P_\P(x)\in \relint(\F)} e^{-f(x) - \frac{\o}{2} \| (A_\F x-b_\F)_+ \|_2^2} \der x \nonumber\\ 
& \ge    \sum_{\F \in F(\P)} \int_{P_\P(x)\in \relint(\F)} e^{-f(x) - \frac{\o}{2} \| A_\F x-b_\F \|_2^2} \der x \nonumber\\ 
& = \sum_{\F \in F(\P)} \sqrt{\det(A_\F A_\F^\top)} \int_{x' \in \relint(\F)} \int_{A_\F^\top z \in \nc_\P(x')} e^{-f(x) - \frac{\o}{2} \| A_\F(x - x') \|_2^2}  \der z \der_{\aff(\F)} x' \nonumber\\
& \ge 
\sum_{\F \in F(\P)} 
\l(\frac{2\pi}{\d_\o'} \r)^{\frac{d-\dim(\F)}{2}}
\frac{1}{\sqrt{\det(A_\F A_\F^\top)}} \nonumber\\
& \qquad  \cdot \int_{x' \in \relint(\F)}  e^{ - f(x') + \frac{\|A^{\dagger \top} \nabla f(x')\|_2^2}{2\d'_\o} } 
\int_{A_\F^\top z \in \nc_\P(x')} 
\operatorname{normal}(z;\xi'_\o,\delta_\o'^{-1} (A_\F A_\F^\top )^{-2})
\der z \der_{\aff(\F)} x'
 \nonumber\\
 & =: \sum_{\F \in F(\P)} 
\l(\frac{2\pi}{\d_\o'} \r)^{\frac{d-\dim(\F)}{2}}
\frac{\t'_\F}{\sqrt{\det(A_\F A_\F^\top)}} 
\int_{x' \in \relint(\F)}  e^{ - f(x') + \frac{\|A^{\dagger \top} \nabla f(x')\|_2^2}{2\d'_\o} } \der_{\aff(\F)} x' \nonumber\\
& =  \sum_{\F \in F(\P)} 
\l(\frac{2\pi}{\d_\o'} \r)^{\frac{d-\dim(\F)}{2}}
\frac{\t'_\F}{C\sqrt{\det(A_\F A_\F^\top)}} 
\int_{x' \in \relint(\F)}  e^{  \frac{\|A^{\dagger \top} \nabla f(x')\|_2^2}{2\d'_\o} }  
\frac{\der \mu^*(x') }{\der_{\aff(\P)} x'} 
\der_{\aff(\F)} x' 
\label{eq:Crho-mid}
\end{align}
where we used \eqref{eq:lo bnd on key term} in the fifth line above. Substituting the above upper bound on $C_\o$ back into \eqref{eq:W2-before-Crho} yields a complete but  bulky upper bound on $\W_2(\mu_\o,\nu_\o)$. To simplify matters, let us focus on the regime of large $\o$. In this case, the dominant term in both sums in \eqref{eq:W2-before-Crho} and \eqref{eq:Crho-mid} happens with $\F=\P$ and we find that  
\begin{align}
\W_2^2(\mu_\o,\nu_\o) = \wt{O}\l(\frac{1}{\o}\r),
\end{align}
which completes the proof of Lemma~\ref{lem:proj-poly}. 

\section{Proof of Lemma~\ref{lem:redist-mass}}\label{sec:redist-mass}

Recalling \eqref{eq:W-defined}, in this section we bound
\begin{align}
\W_2(\nu_\o,\mu^*) = \min_{\phi} \int \| x-y\|_2^2 \der \phi(x,y),
\label{eq:w2recall}
\end{align}
where the minimization is over all joint probability measures $\phi$ that marginalize to $\nu_\o$ and $\mu^*$, namely, 
\begin{align}
\phi( A \times \R^d ) = \nu_\o(A),
\qquad 
\phi( \R^d \times B) = \mu^*(B),
\label{eq:marginal-written}
\end{align}
for all measurable sets $A,B\subseteq \R^d$. 
 We consider one such $\phi$, denoted by $\phi_0$, that additionally satisfies 
\begin{align}
\der \phi_0( x,y) & = 
\begin{cases}
0 & x\ne y \text{ and } x,y\in \relint(\P)\\
(1- \nu_\o(\partial \P ) ) \der \mu^*(x) & x =  y \text{ and } x,y\in \relint(\P),
\end{cases}
\label{eq:new-constraints}
\end{align}
where $\partial \P$ is the boundary of $\P$. 
In words, $\phi_0$ pushes some of the mass from inside of the polytope $\P$ to its boundary. For $x\in \relint(\P)$,  \eqref{eq:new-constraints} in particular implies that 
\begin{align}
\int_{y} \der \phi_0(x,y) & =  \int_{y\in \partial \P} \der \phi_0(x,y) 
+ \int_{y \ne x, y\in \relint(\P)} \der \phi_0(x,y)
+ \int_{y = x } \der \phi_0(x,y) \nonumber\\
& = \int_{y\in \partial \P} \der \phi_0(x,y)  + (1-\nu_\o(\partial \P)) \der \mu^*(x). 
\label{eq:g0-satisfies}
\end{align}
Rearranging \eqref{eq:g0-satisfies} and using the fact that $\int_y \der \phi_0(x,y) = \der \mu^*(x) $ yields that 
\begin{align}
\int_{y\in \partial \P} \der \phi_0(x,y) = \nu_\o (\partial \P) \der \mu^*(x),
\qquad 
\forall x\in \relint(\P).
\label{eq:g0-also-satisfies}
\end{align}
Consequently, for every $x\in \relint(\P)$, it  holds that 
\begin{align}
\int_{y\ne x} \der \phi_0(x,y)  & = \int_{y\ne x, y\in \relint(\P)} \der \phi_0(x,y)
+ 
\int_{ y\in \partial(\P)} \der \phi_0(x,y)  \nonumber\\
& = \int_{ y\in \partial(\P)} \der \phi_0(x,y) 
\qquad \text{(see \eqref{eq:new-constraints})}
 \nonumber\\
& = \nu_\o (\partial \P) \der \mu^*(x),
\qquad \text{(see \eqref{eq:g0-also-satisfies})}
\label{eq:useful-conseq-joint}
\end{align}
which we will use below. 
With the (possibly suboptimal) choice of $\phi_0$ described above, we bound $\W_2(\nu_\o,\mu^
*)$ as 
\begin{align}
\W_2^2(\nu_\o,\mu^*) & \le \int \|x-y\|_2^2 \der \phi_0(x,y) 
\qquad \text{(see \eqref{eq:w2recall})}
\nonumber\\
& = \int_{x\ne y} \|x-y\|^2_2 \der \phi_0(x,y) \nonumber\\
& \le \diam(\P)^2 \int_{x\ne y}  \der \phi_0(x,y) \nonumber\\
& = \diam(\P)^2 \int_{ x\in \relint(\P)} \int_{y\ne x } \der \phi_0(x,y) + 
 \diam(\P)^2 \int_{x\in \partial\P} \int_{y \ne x} \der \phi_0(x,y) \nonumber\\
 & \le   \diam(\P)^2 \nu_\o (\partial \P) \int_{x\in \relint(\P)} \der \mu^*(x) 
 + \diam(\P)^2 \int_{x\in \partial \P}\int_y \der \phi_0(x,y) 
\qquad \text{(see \eqref{eq:useful-conseq-joint})} 
 \nonumber\\
 & =   \diam(\P)^2 \nu_\o (\partial \P) \int_{x\in \relint(\P)} \der \mu^*(x) 
 + \diam(\P)^2 \int_{x\in \partial \P} \der \mu^*(x) \nonumber\\
 & = \diam(\P)^2 \nu_\o (\partial \P) \mu^*(\relint(\P)) + \diam(\P)^2 \mu^*(\partial\P) \nonumber\\
 & = \diam(\P)^2 \nu_\o (\partial \P) \mu^*(\P)
\qquad ( \mu^*(\partial \P) =  0 )
 \nonumber\\
&  =\diam(\P)^2 \nu_\o (\partial \P), 
\qquad (  \mu^*(\P) = 1 )
\label{eq:before-comp-nu}
\end{align}
where $\diam(\P)$ is the $\ell_2$ diameter of $\P$, and the second-to-last line is valid because $\mu^*$ is dominated by the Lebesgue measure restricted to  $\P$. It remains to compute $\nu_\o(\partial \P)$ in the last line above. By definition of $\nu_\o$, it holds that 
\begin{align}
\nu_\o(\partial \P) & = \mu_\o \l(\ol{ \P} \r) \nonumber\\
& = \E_{X\sim \mu_\o} 1( X\notin \P),
\end{align}
where $\ol{\P}$ is the complement of $\P$ in $\R^d$, and $1(X\notin \P)$ is the indicator for the event $\{ X\notin \P\}$. To compute the expectation in the last line above, note that 
\begin{align}
 \E_{X\sim \mu_\o} 1(X\notin \P) 
& = \sum_{\dim(\F)< \dim(\P) } \E_{X\sim \mu_\o}\l[ 1(P_{\P}(X)\in \relint(\F)) \r],
\end{align}
where the sum is every face $\F$ of $\P$ with $\dim(\F)<\dim(\P)$. The sum above can be bounded using the same steps as in the proof of Lemma~\ref{lem:proj-poly}, which we omit here. In particular, in the regime of large $\o$, we obtain that 
\begin{align}
\nu_\o(\partial \P) = \wt{O}\l(\frac{1}{\sqrt{\o}}\r), 
\end{align}
which, after substituting in \eqref{eq:before-comp-nu}, yields that
\begin{align}
\W_2^2(\nu_\o,\mu^*) = \diam(\P)^2 \wt{O}\l(\frac{1}{\sqrt{\o}}\r),
\end{align}
which completes the proof of Lemma~\ref{lem:redist-mass}.

\fi

\end{document}